
\documentclass[12pt]{article}

\usepackage{mathrsfs}

\usepackage{amssymb, amsthm, amsfonts, amsxtra, amsmath}
\usepackage{latexsym}
\usepackage{mathabx}

\topmargin -15mm \textwidth 19truecm \textheight 24truecm
\oddsidemargin -12mm \evensidemargin 4mm

\theoremstyle{change}

\newtheorem{Theorem}{Theorem}[section]
\newtheorem{Def}[Theorem]{Definition}
\newtheorem{Lem}[Theorem]{Lemma}
\newtheorem{Prop}[Theorem]{Proposition}
\newtheorem{Cor}[Theorem]{Corollary}
\newtheorem{Not}[Theorem]{Notation}

\newtheorem{Def-Prop}[Theorem]{Definition-Proposition}

\date{}

\begin{document}

\hyphenation{Wo-ro-no-wicz}

\title{Equivariant Morita equivalences between Podle\'{s} spheres}
\author{Kenny De Commer\footnote{Supported in part by the ERC Advanced Grant 227458
OACFT ``Operator Algebras and Conformal Field Theory" }\\ \small Dipartimento di Matematica \& CMTP (Center for Mathematics and Theoretical Physics),\\ \small Universit\`{a} degli Studi di Roma Tor Vergata\\
\small Via della Ricerca Scientifica 1, 00133 Roma, Italy\\ \\ \small e-mail: decommer@mat.uniroma2.it}
\maketitle

\newcommand{\acnabla}{\nabla\!\!\!{^\shortmid}}
\newcommand{\undersetmin}[2]{{#1}\underset{\textrm{min}}{\otimes}{#2}}
\newcommand{\otimesud}[2]{\overset{#2}{\underset{#1}{\otimes}}}
\newcommand{\qbin}[2]{\left[ \begin{array}{c} #1 \\ #2 \end{array}\right]_{q^2}}

\newcommand{\qortc}[4]{\,\;_1\varphi_1\left(\begin{array}{c} #1  \\#2 \end{array}\mid #3,#4\right)}
\newcommand{\qortPsi}[4]{\Psi\left(\begin{array}{c} #1  \\#2 \end{array}\mid #3,#4\right)}
\newcommand{\qorta}[5]{\,\;_2\varphi_1\left(\begin{array}{cc} #1 & #2 \\ & \!\!\!\!\!\!\!\!\!\!\!#3 \end{array}\mid #4,#5\right)}
\newcommand{\qortd}[6]{\,\;_2\varphi_2\left(\begin{array}{cc} #1 & #2 \\ #3 & #4 \end{array}\mid #5,#6\right)}
\newcommand{\qortb}[7]{\,\;_3\varphi_2\left(\begin{array}{ccc} #1 & #2 & #3 \\ & \!\!\!\!\!\!\!\!#4 & \!\!\!\!\!\!\!\!#5\end{array}\mid #6,#7\right)}

\newcommand{\otimesmin}{\underset{\textrm{min}}{\otimes}}
\newcommand{\bigback}{\!\!\!\!\!\!\!\!\!\!\!\!\!\!\!\!\!\!\!\!\!\!\!\!}

\abstract{\noindent We show that the family of Podle\'{s} spheres is complete under equivariant Morita equivalence (with respect to the action of quantum $SU(2)$), and determine the associated orbits. We also give explicit formulas for the actions which are equivariantly Morita equivalent with the quantum projective plane. In both cases, the computations are made by examining the localized spectral decomposition of a generalized Casimir element.}\\

\noindent \emph{Keywords}: compact quantum groups; representation theory; Morita equivalence; Galois objects\\

\noindent AMS 2010 \emph{Mathematics subject classification}: 17B37, 81R50, 46L08


\section*{Introduction}

\noindent This paper is concerned with $SU_q(2)$, the quantum $SU(2)$ group, at real values $0<q<1$ (\cite{Wor1}). In \cite{Pod2}, the $SU_q(2)$-homogeneous spaces were classified which have the same spectral decomposition as the ordinary action of $SU(2)$ on the 2-sphere (and whose spin 1-part generates the algebra). They form a continuous one-parameter-family $S_{qc}^2$, called \emph{Podle\'{s} spheres}, and are indexed by a number $c \in \lbrack 0,+\infty\rbrack$. In this paper, we give a classification with respect to a weaker equivalence relation, namely \emph{equivariant Morita equivalence}. The notation we follow in the Introduction will be the one of \cite{Pod2}. (In the paper itself we will use a different notational convention which is more convenient for our purposes).

\begin{Theorem}\label{Theo1} Write \[c:\lbrack 0,+\infty\rbrack \rightarrow \lbrack 0,+\infty \rbrack : x\rightarrow (q^{-x}-q^x)^{-2}.\] Then \[S_{qc(x)}^2 \underset{equiv.}{\underset{SU_q(2)\textrm{-Morita}}{\cong}} S_{qc(y)}^2 \qquad \Leftrightarrow \qquad \exists m\in \mathbb{Z} \textrm{ with } y= |x+m|.\] Moreover, any quantum homogeneous space $\mathbb{X}$ of $SU_q(2)$ which is equivariantly Morita equivalent with a Podle\'{s} sphere is itself a Podle\'{s} sphere.
\end{Theorem}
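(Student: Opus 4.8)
The plan is to reduce everything to Podle\'{s}'s spectral characterisation: a quantum homogeneous space of $SU_q(2)$ is a Podle\'{s} sphere exactly when its isotypical decomposition under the $SU_q(2)$-coaction is $\bigoplus_{n\ge 0}V_n$ with each integer spin occurring once and with the spin-$1$ component generating the algebra. Thus, starting from an equivariant imprimitivity bimodule $E$ implementing a Morita equivalence between $\mathbb{X}$ and a fixed Podle\'{s} sphere $\mathbb{B}=S_{qc}^2$, the task splits into: (i) determine the isotypical type of $E$; (ii) transport it through the equivariant isomorphisms $E\otimes_{\mathbb{X}}\overline{E}\cong\mathbb{B}$ and $\overline{E}\otimes_{\mathbb{B}}E\cong\mathbb{X}$ to recover the isotypical type of $\mathbb{X}$; (iii) check the generation property for $\mathbb{X}$.

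First I would form the linking $*$-algebra $L$ of the Morita equivalence — carrying an $SU_q(2)$-coaction and a compatible $2\times 2$ matrix structure, with diagonal corners $\mathbb{B}$, $\mathbb{X}$ and off-diagonal corners $E$, $\overline{E}$ — and exploit that, the corners being quantum homogeneous spaces, the coactions on $\mathbb{B}$ and $\mathbb{X}$ are ergodic. Ergodicity yields finite isotypical multiplicities and the usual quantum-dimension bounds; since $E$ is sandwiched between two multiplicity-free objects, reading the two bimodule identities above through Frobenius reciprocity should force $E$ to be multiplicity-free with spins forming an interval $\{j_0,j_0+1,j_0+2,\dots\}$ of a single parity class. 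This is the step I expect to be the main obstacle: a priori one may tensor an equivariant Morita equivalence by a finite-dimensional corepresentation, producing homogeneous spaces with larger multiplicities, and the ergodicity bound by itself does not exclude this; the exclusion will have to be fed back from the spectral rigidity of the next step rather than obtained purely representation-theoretically.

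The core computation then follows the route announced in the abstract. Localising $L$ at its invariant state produces a Hilbert space on which left multiplication by the distinguished self-adjoint spin-$1$ element of $\mathbb{B}$ acts as a self-adjoint ``generalised Casimir'' operator whose spectrum on the $\mathbb{B}$-sector is a union of geometric ($q^{2}$-)grids with endpoints encoding $c$. Since this operator commutes with left multiplication by the corresponding spin-$1$ element of $\mathbb{X}$, and since $E$ is multiplicity-free, the pair can be diagonalised simultaneously on $E$, and one reads off that the $\mathbb{X}$-side operator again has a union-of-grids spectrum, now with the grid shifted by an integral power of $q$ and possibly reflected. Matching the endpoints shows simultaneously that the spectral multiplicities of $\mathbb{X}$ are $(1,1,1,\dots)$ over the integer spins and that the parameter of $\mathbb{X}$ equals $c(|x\pm m|)$ — which is exactly the orbit description of the first part. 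Finally, $E\otimes_{\mathbb{X}}\overline{E}\cong\mathbb{B}$ together with the fact that the spin-$1$ part of $\mathbb{B}$ generates $\mathbb{B}$ forces the spin-$1$ part of $\mathbb{X}$ to generate $\mathbb{X}$; hence $\mathbb{X}$ satisfies Podle\'{s}'s characterisation and is a Podle\'{s} sphere.

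The remaining technical work concentrates in two places. One is the honest verification of step (i)–(ii), i.e.\ that no ``inflated'' bimodule survives; as indicated, I expect this to come out of the spectral analysis rather than to be available beforehand, so steps (i)–(iii) are best extracted together from the localised decomposition. The other is the bookkeeping of the two commuting shift operators on $E$ in the degenerate cases $x=0$ and $x=\infty$ (that is $c=0$ and $c=\infty$), where the grids become half-lines and the endpoint-matching argument needs to be run with care.
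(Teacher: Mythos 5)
Your high-level strategy (spectral decomposition of a Casimir-type element) is the one the paper announces, but as written the proposal has a genuine gap exactly where you flag it, and the paper's mechanism for closing it is absent. The missing idea is the reduction of an \emph{arbitrary} equivariant equivalence bimodule to a chain of elementary steps: since the spin-$1/2$ representation generates the representation ring of $SU_q(2)$, every irreducible equivariant Hilbert $C(\mathbb{X})$-module occurs as a corner $p(C(\mathbb{X})\otimes M_2(\mathbb{C}))p$ along a finite chain, with $p$ a minimal invariant projection (Lemma \ref{LemInd} in the paper, derived from Proposition \ref{LemMor}). This is what eliminates your ``inflated bimodule'' worry \emph{before} any spectral analysis: one never has to control the isotypical type of a general $E$, only to classify the minimal invariant projections in $\textrm{Pol}(S^2_{q\tau})\odot M_2(\mathbb{C})$. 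Your plan to ``feed the exclusion back from the spectral rigidity of the next step'' does not work as stated, because your next step presupposes that $E$ is multiplicity-free in order to diagonalise the two commuting operators simultaneously --- the argument is circular without the inductive reduction.

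Second, the core computation is asserted rather than performed, and the object you propose to diagonalise is not the right one. The paper's Casimir is not left multiplication by the spin-$1$ generator $Z$ of $\mathbb{B}$ on a GNS space of the linking algebra; it is the image $T^{(2)}_{\tau,+}$ of the central element $T$ of the Galois object $U_q(-,+)$ under the composite map into $\textrm{Pol}^{\textrm{ext}}(S^2_{q\tau})\odot M_2(\mathbb{C})$. This element is \emph{invariant}, has exactly the two eigenvalues $\tau(x+1)$ and $\tau(x-1)$ (a concrete $2\times 2$ eigenvalue computation), and Proposition \ref{PropQuot} identifies the quotient by $T-\tau(x\pm 1)$ with $\textrm{Pol}^{\textrm{ext}}(S^2_{q\tau(x\pm1)})$ --- that is where the integrality of the shift comes from; ``matching grid endpoints'' of the spectrum of $Z$ by itself does not force the shift to be integral. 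Two further points you would still need: (i) showing the corner equals, rather than merely contains, the smaller Podle\'{s} sphere requires the weak-density argument with the invariant normal state (Lemma \ref{LemvN}); (ii) the standard Podle\'{s} sphere $S^1\backslash SU_q(2)$ is not handled by endpoint bookkeeping at all, but by the separate observation that $S^1$ has only one-dimensional irreducibles, so induction from it produces nothing new.
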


\noindent \emph{Remark:} The moreover-part follows from the results of \cite{Tom1} and the classification in \cite{Pod2}, but we will give an independent proof.\\

\noindent For the \emph{equatorial Podle\'{s} sphere} $S_{q\infty}^2$, there exists an $SU_q(2)$-equivariant $\mathbb{Z}_2$-symmetry, which allows us to form the \emph{quantum projective plane} $\mathbb{R}P_q^2$ as an $SU_q(2)$-homogeneous space (see e.g.~ \cite{Haj1}). The following theorem provides the classification of quantum homogeneous spaces which are $SU_q(2)$-Morita equivalent with $\mathbb{R}P_q^2$.\\

\begin{Theorem}\label{Theo2} For $l \in \frac{1}{2}\mathbb{N}_0$, let $B_l$ be the unital $^*$-algebra generated by elements $X,Z,Y$ and $4l+1$ elements $A_s$, where $s\in \{-2l,-2l+1,\ldots, 2l-1,2l\}$, satisfying the following relations: \[\hspace{-12cm} \left\{\begin{array}{llllll} Y^* &=&X \\ Z^*&=&Z \\ A_s^* &=& (-1)^sA_{-s},\end{array}\right.\]\[\hspace{-11.5cm}\left\{\begin{array}{llllll} XZ &=& q^2ZX \\ A_s Z &=& -q^{-2s} ZA_s,  \end{array}\right.\] \[\hspace{-7.1cm}\left\{\begin{array}{llllll} XA_s &=& -A_{s-1}(1+q^{2s+2l-1}Z) & \textrm{for }s>-2l \\ XA_{-2l} &=& -A_{-2l}X \\ X^*A_s &=& -A_{s+1}(1-q^{2s-2l+1}Z) &\textrm{for }s<2l\\ X^*A_{2l} &=& -A_{2l}X^*\end{array}\right.\] and \[\left\{\begin{array}{l} \left\{ \begin{array}{lll} X^*X = (1-q^{2l-1} Z)(1+q^{-2l-1}Z) \\ XX^* = (1-q^{2l+1}Z) (1+q^{-2l+1}Z)\end{array}\right.\\ \\ \left\{\begin{array}{lllllll} A_{s}A_{s'} &=& (-1)^s X^{-(s+s')}(q^{2s'-2l+1}Z;q^2)_{s+2l}(-q^{-2l+1}Z;q^2)_{s'+2l}& \textrm{for }s+s'\leq 0\\ &=& (-1)^s(X^*)^{s+s'} (q^{2s'-2l+1}Z;q^2)_{2l-s'}(-q^{2s+2s'-2l+1}Z;q^2)_{2l-s}&\textrm{for }s+s'\geq0.\end{array}\right.\end{array}\right.\] In particular, the unital $^*$-algebra $\textrm{Alg}(X,Z,Y)$ generated by $X,Z,Y$ is an isomorphic copy of the Podle\'{s} sphere at parameter $c(2l)$.\\

\noindent Then we can define on $B_l$ an ergodic action of $SU_q(2)$ which agrees with the usual action on the copy $\textrm{Alg}(X,Z,Y)$ of the Podle\'{s} sphere, and such that \[\theta_{2l} = (q^{\frac{1}{2}s(s-1)} \frac{(q^{4l-2s+2};q^2)_{s+2l}^{1/2}}{(q^{2};q^2)_{s+2l}^{1/2}} A_{s})_{s} \;\in B_{l}\otimes \mathbb{C}^{4l+1}\] is a $\pi_{2l}$-eigenvector (where $\pi_{r}$ for $r\in \frac{1}{2}\mathbb{N}$ denotes the spin $r$-representation of $SU_q(2)$). \\

\noindent If we denote by the formal symbol $\mathbb{X}_l$ the quantum homogeneous space associated with the action on $B_l$, then a quantum homogeneous space $\mathbb{X}$ of $SU_q(2)$ is equivariantly Morita equivalent with $\mathbb{R}P_q^2$ iff it is isomorphic to $\mathbb{R}P_q^2$ or one of the $\mathbb{X}_l$.\end{Theorem}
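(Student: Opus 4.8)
The plan is to run the same machine as in the proof of Theorem~\ref{Theo1}. For each implication one attaches to an $SU_q(2)$-equivariant imprimitivity bimodule issuing from $\mathbb{R}P_q^2$ a \emph{generalized Casimir element} $\Omega$ lying in (a suitable completion of) the associated linking algebra, and one reads off the equivariant Morita class --- and, in fact, an explicit presentation --- from the \emph{localized} spectral decomposition of $\Omega$. The feature absent from the Podle\'{s}-sphere situation is the $SU_q(2)$-equivariant $\mathbb{Z}_2$-symmetry $\sigma$ realising $\mathbb{R}P_q^2$ as the fixed-point algebra of the equatorial sphere $S_{q\infty}^2 = S_{q,c(0)}^2$: it becomes a symmetry of the admissible configurations of $\Omega$, and it is this constraint that replaces the continuous moduli occurring for Podle\'{s} spheres by the \emph{discrete} list consisting of the $\mathbb{X}_l$ together with $\mathbb{R}P_q^2$ itself.

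\emph{Each $\mathbb{X}_l$ lies in the class.} Fix $l$ with $2l\in\mathbb{N}_0$. By Theorem~\ref{Theo1} applied to the orbit of $x=0$ there is an $SU_q(2)$-equivariant imprimitivity bimodule $\mathcal{E}_l\colon S_{q\infty}^2\sim S_{q,c(2l)}^2$; conjugating $\sigma$ through $\mathcal{E}_l$ produces an $SU_q(2)$-equivariant self-imprimitivity bimodule $\mathcal{M}_l$ of $S_{q,c(2l)}^2$ with canonically trivial square. The corresponding $\mathbb{Z}_2$-graded $^*$-algebra $S_{q,c(2l)}^2\oplus\mathcal{M}_l$ is then, by transport through $\mathcal{E}_l$, equivariantly Morita equivalent to $S_{q\infty}^2\rtimes\mathbb{Z}_2$, and hence to $\mathbb{R}P_q^2$ (for $l=0$ it is $S_{q\infty}^2\rtimes\mathbb{Z}_2$ itself, inside which $\mathbb{R}P_q^2$ sits as the full corner cut out by $\tfrac12(1+u_\sigma)$). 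It remains to identify $S_{q,c(2l)}^2\oplus\mathcal{M}_l$ with $B_l$. The odd part $\mathcal{M}_l$, being an equivariant imprimitivity bimodule over $S_{q,c(2l)}^2 = \mathrm{Alg}(X,Z,Y)$, is generated as a right module by a single $\pi_{2l}$-eigenvector, necessarily $\theta_{2l}$ up to scalar; writing $\theta_{2l}$ through its components $A_s$ and applying the $SU_q(2)$ fusion rules together with the relations of $\mathrm{Alg}(X,Z,Y)$ then forces the mixed relations ($XA_s=-A_{s-1}(1+q^{2s+2l-1}Z)$, $A_sZ=-q^{-2s}ZA_s$, and the rest), while the $q$-shifted-factorial expressions for $A_sA_{s'}$ are the only ones compatible with $A_s^*=(-1)^sA_{-s}$ and with $\mathcal{M}_l\overline{\mathcal{M}}_l = S_{q,c(2l)}^2$. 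This gives $B_l\cong S_{q,c(2l)}^2\oplus\mathcal{M}_l$, so every $\mathbb{X}_l$ is equivariantly Morita equivalent to $\mathbb{R}P_q^2$; that the action on $B_l$ exists, is ergodic, and has $\theta_{2l}$ as a $\pi_{2l}$-eigenvector is the content of the middle part of the theorem, which I take as given.

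\emph{Nothing else lies in the class.} Conversely, let $\mathbb{X}$, with coaction algebra $B$, be equivariantly Morita equivalent to $\mathbb{R}P_q^2$ via an $SU_q(2)$-equivariant imprimitivity bimodule $\mathcal{E}$. Inside the linking algebra one builds, from its low-spin isotypical components, a distinguished $\pi_r$-eigenvector $\theta$ and a self-adjoint generalized Casimir element $\Omega$; since $\mathbb{R}P_q^2\subseteq S_{q\infty}^2$ has index $2$, the symmetry $\sigma$ acts on this data. Localizing $\Omega$ over the abelian subalgebra it generates, one then argues exactly as in Theorem~\ref{Theo1}, but now with the extra $\sigma$-constraint, that positivity of the elements playing the role of $X^*X$ and $XX^*$ together with the recursion imposed by the coaction makes the localized spectrum of $\Omega$ rigid: it is a union of two \emph{terminating} geometric progressions in $q^2$ whose length is governed by a single parameter, and the $\sigma$-constraint ties their two ends together. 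Reading off a complete presentation of $B$ from this spectral data, one finds that it coincides with the presentation of some $B_l$, $l\in\tfrac12\mathbb{N}_0$ --- the half-integer values occurring because the $\mathbb{Z}_2$-grading may sit offset by a half-step relative to the ladder, which is also what produces the signs $(-1)^s$ --- the sole exception being the non-folded configuration, which reproduces $\mathbb{R}P_q^2$.

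The step I expect to be the real obstacle is this rigidity in the converse direction: proving that the localized spectrum of $\Omega$ admits no admissible branch beyond the terminating ladders of the $\mathbb{X}_l$ and of $\mathbb{R}P_q^2$. This needs positivity of the $X^*X$- and $XX^*$-type elements to force the spectrum to terminate at \emph{both} ends and to pin down the endpoints, excluding two-sided-infinite or mismatched spectra, and it needs a careful analysis of how the $\mathbb{Z}_2$-grading is positioned relative to the spectral ladder --- precisely the point that yields the half-integer index $l\in\tfrac12\mathbb{N}_0$ and isolates $\mathbb{R}P_q^2$ as the unique non-folded case. A secondary, and heavily computational, task is to verify by means of $q$-hypergeometric identities that the presentation of $B_l$ displayed in the theorem is consistent and complete, so that the abstract crossed product $S_{q,c(2l)}^2\oplus\mathcal{M}_l$ satisfies exactly those relations; this is what makes the identification $B_l\cong S_{q,c(2l)}^2\oplus\mathcal{M}_l$ in the first part rigorous.
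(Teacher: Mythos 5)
Your first-direction skeleton (conjugating the $\mathbb{Z}_2$-symmetry $\sigma$ of the equatorial sphere through the imprimitivity bimodules of Theorem~\ref{Theo1} to obtain $\mathbb{Z}_2$-graded algebras $S^2_{q,c(2l)}\oplus\mathcal{M}_l$ Morita equivalent to $S^2_{q\infty}\rtimes\mathbb{Z}_2\sim\mathbb{R}P_q^2$) is a legitimate alternative in spirit, but the proposal has genuine gaps exactly where the theorem has content. First, you ``take as given'' that the action on $B_l$ exists, is ergodic, and has $\theta_{2l}$ as a $\pi_{2l}$-eigenvector; that is the middle assertion of the theorem itself, and it is where most of the work lies: one must exhibit a faithful $^*$-representation of the abstract presentation (the paper does this with explicit banded operators on $V\oplus V$), check that the Miyashita--Ulbrich action coming from the Galois object $U_q(-,+)$ restricts to $B_l$ and acts on the $A_s$ as claimed, and prove ergodicity (Lemma~\ref{LemInvs}). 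Second, the identification $B_l\cong S^2_{q,c(2l)}\oplus\mathcal{M}_l$ is asserted rather than proved: you do not show that the lowest spin occurring in $\mathcal{M}_l$ is $2l$, nor which of the two elements of $\textrm{Pic}_{\mathbb{G}}(S^2_{q,c(2l)})\cong\mathbb{Z}_2$ your conjugated bimodule represents (the odd part of $B_l$ is the \emph{nontrivial} one, intertwining $\pi_{\tau(2l)}$ with $\pi_{\tau(-2l)}$), nor that the $q$-factorial formulas for $A_sA_{s'}$ are the unique consistent completion. You call this a ``secondary, heavily computational task,'' but it is the verification that the presentation in the statement is consistent and describes the object you built; without it nothing is proved.

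The converse is where the proposal would actually fail. Your plan --- attach a generalized Casimir $\Omega$ to an \emph{arbitrary} equivalence bimodule out of $\mathbb{R}P_q^2$ and prove its localized spectrum is rigid --- is precisely the step you flag as ``the real obstacle,'' and you give no argument for it. No such rigidity statement is needed: by Lemma~\ref{LemInd} with the single generator $\pi_{1/2}$, every homogeneous space Morita equivalent to $\mathbb{R}P_q^2$ is reached by iteratively cutting $C(\mathbb{Y})\otimes M_2(\mathbb{C})$ by a minimal invariant projection, so the converse reduces to the finite check that for each member $B$ of the family $\{\mathbb{R}P_q^2\}\cup\{B_l\}$ every minimal invariant projection of $B\otimes M_2(\mathbb{C})$ cuts out a corner that is again in the family. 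That is exactly what the computations $p_{2l\pm1}(B_l\otimes M_2(\mathbb{C}))p_{2l\pm1}=B_{l\pm\frac12}$ and $\textrm{Pol}(\mathbb{R}P_q^2)\otimes M_2(\mathbb{C})\cong B_{\frac12}$ deliver (ergodicity of the corners forces minimality of the $p_{2l\pm1}$, and for $B_0$ one checks the fixed-point algebra is $M_2(\mathbb{C})$). The spectral analysis of the Casimir is only needed for the concrete members of the family, not for an arbitrary linking algebra; I would redirect your converse onto this inductive mechanism.
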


\noindent Classically (i.e.~ for $q=1$), the $B_l$ correspond to the inductions to $SU(2)$ of the actions $\textrm{Ad}(\pi_l)$ of $D_{\infty}^*$, where $D_{\infty}^{*} \subseteq SU(2)$ is the double cover of the infinite dihedral group $S^1\rtimes \mathbb{Z}_2$, and with the $\pi_l$ denoting its $2$-dimensional irreducible representations. Note that the $B_l$ have a natural equivariant $\mathbb{Z}_2$-gradation.\\

\noindent To prove these theorems, we will proceed as follows. In \cite{DeC2}, we introduced a $^*$-algebra $U_q(-,+)$, equipped with a right module $^*$-algebra structure of $U_q(su(2))$, the quantum universal enveloping $^*$-algebra of $su(2)$. We showed that the Podle\'{s} spheres (for $c\neq 0$) can be realized as equivariant sub-quotients of this $^*$-algebra, by evaluation of a certain central and self-adjoint Casimir element. But as $U_q(-,+)$ also has a compatible \emph{co}-module $^*$-algebra structure for $U_q(su(2))$ (namely, a \emph{Yetter-Drinfel'd} structure), one can compose representations of $U_q(-,+)$ with ordinary representations of $U_q(su(2))$, and split these up into irreducibles (a classical method). From applying such a composition to the mentioned Casimir element, the decomposition can easily be deduced by a (trivial) spectral decomposition. From such a procedure we will then be able to prove Theorem \ref{Theo1}. Also Theorem \ref{Theo2} will be proved in a similar fashion.\\

\noindent Let us remark that by \cite{Tom1}, the computation of the orbit under $\mathbb{G}$-equivariant Morita equivalence for an ergodic action $\alpha$ on a unital C$^*$-algebra $B$ can be found by studying the representation theory of $B\rtimes \mathbb{G}$. From this observation, it follows that our work will be directly connected with \cite{Schm1}, where the infinitesimal version $\textrm{Pol}(S_{qc}^2)\rtimes U_q(su(2))$ is studied from a representation theoretic viewpoint. We will at the appropriate places remark where we make contact with \cite{Schm1}, but on the whole our approach is a little different as we tend to work locally.\\

\noindent The contents of this paper are as follows.\\

\noindent After a section containing notational conventions, our \emph{first section} introduces those quantum group concepts we will need in the paper. Nothing in this section is original, but we provide short proofs for certain statements nevertheless. In the \emph{second section}, we prove Theorem \ref{Theo1}, and as a corollary compute the equivariant Picard group for the Podle\'{s} spheres. In the \emph{third section}, we prove Theorem \ref{Theo2}.\\

\section*{Notations}

\noindent In the remainder of the article, $q$ will denote a real number strictly between 0 and 1. We then write \[\lambda = (q-q^{-1})^{-1}.\] We will also use a different parametrization $\tau$ of $\lbrack -\infty,+\infty\rbrack$, namely \[\tau(x) = q^{-x}-q^{x} \qquad \textrm{for }x\in \lbrack -\infty,+\infty\rbrack.\]

\noindent All our vector spaces will be over the ground field $\mathbb{C}$. For $V$ a vector space, we denote $L(V)$ for the space of linear endomorphisms of $V$, and by $V^{\circ}$ the space of linear functionals. If $V$ is endowed with a Hilbert space structure $\mathscr{H}$, we denote $B(\mathscr{H})$ for the $^*$-algebra of bounded operators. When we have a basis $e_i$ of a vector space $V$, parametrized by a set $I$, then $e_i$ is interpreted to be zero if $i\notin I$.\\

\noindent By $\odot$, we will denote the algebraic tensor product of two vector spaces or algebras over $\mathbb{C}$. By $\otimes$, we will denote the tensor product between Hilbert spaces, or the minimal tensor product between C$^*$-algebras.  We will also use the leg notation for tensor products: for example, if we have spaces $V_1,V_2,V_3$, and $X$ an operator in $L(V_{1}\odot V_{3})$, we denote by $X_{13}$ the operator on $V_1\odot V_2\odot V_3$ acting as $X$ on the first and third component, and as the identity on the second component. \\

\noindent For $r\in \mathbb{N}\cup\{\infty\}$ and $a\in \mathbb{C}$, we denote by $(a;q)_r$ the $q$-factorial \[(a;q)_{r} = \overset{r-1}{\underset{k=0}{\prod}} (1-q^ka).\]

\section{Preliminaries}

\subsection{Quantum groups}\label{SecQG}

\noindent We will freely use the language of Hopf algebras, Hopf $^*$-algebras, and C$^*$-algebraic compact quantum groups (see e.g.~ \cite{Kli1}). For a Hopf algebra $(H,\Delta)$, we will use Sweedler notation in the form \[\Delta(h) = h_{(1)}\otimes h_{(2)}\qquad \textrm{for }h\in H.\] A C$^*$-algebraic compact quantum group will always be written in the form $(C(\mathbb{G}),\Delta)$, and we then refer to the symbol $\mathbb{G}$ as `the compact quantum group'. The associated Hopf $^*$-algebra is written $\textrm{Pol}(\mathbb{G})$. Except for the preliminary section, we will only be interested in these objects for one particular quantum group, namely $\mathbb{G}=SU_q(2)$.  \\

\noindent \emph{Important remark:} As to avoid overloading certain statements, we will in the remainder of this section always assume that \emph{$\mathbb{G}$ is co-amenable}, so that $C(\mathbb{G})$ is uniquely determined by $\textrm{Pol}(\mathbb{G})$.\\

\noindent The following easy lemma will be needed at a certain point. Let $H$ be an algebra, and $V$ a right $H$-module. We then denote by $V_{\textrm{fin}}\subseteq V$ the submodule of all \emph{locally finite elements}, i.e.~ \[V_{\textrm{fin}} = \{v\in V\mid \{v\cdot h\mid h\in H\} \textrm{ is finite-dimensional}\}.\]

\begin{Lem}\label{LemFin} Let $(H,\Delta)$ be a Hopf algebra, and let $V$ and $W$ be two right $H$-modules. Then \[(V\odot W)_{\textrm{fin}} = V_{\textrm{fin}}\odot W_{\textrm{fin}}.\]\end{Lem}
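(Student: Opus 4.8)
The plan is to prove the two inclusions separately, the easy one being $V_{\textrm{fin}}\odot W_{\textrm{fin}}\subseteq (V\odot W)_{\textrm{fin}}$. For this, note that if $v\in V_{\textrm{fin}}$ and $w\in W_{\textrm{fin}}$, then the span of $\{(v\odot w)\cdot h\mid h\in H\}$ is contained in $(\textrm{span}\{v\cdot h\mid h\in H\})\odot(\textrm{span}\{w\cdot h\mid h\in H\})$, since $(v\odot w)\cdot h = (v\cdot h_{(1)})\odot(w\cdot h_{(2)})$ by the definition of the module structure on a tensor product; this is a tensor product of two finite-dimensional spaces, hence finite-dimensional. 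Taking linear combinations, any element of $V_{\textrm{fin}}\odot W_{\textrm{fin}}$ is locally finite.

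For the reverse inclusion $(V\odot W)_{\textrm{fin}}\subseteq V_{\textrm{fin}}\odot W_{\textrm{fin}}$, I would first reduce to the case where $V$ itself is locally finite. Given $\xi\in (V\odot W)_{\textrm{fin}}$, write $\xi = \sum_{i=1}^{n} v_i\odot w_i$ with the $w_i$ linearly independent. Choose functionals $\omega_i\in W^{\circ}$ with $\omega_i(w_j)=\delta_{ij}$; then $v_i = (\textrm{id}\odot\omega_i)(\xi)$ lies in the linear span of $\{(\textrm{id}\odot\omega_i)((\textrm{id}\odot\rho)(\xi\cdot h))\}$ as $h$ ranges over $H$ and $\rho$ over $W^{\circ}$, and using $\xi\cdot h=\sum (v_i\cdot h_{(1)})\odot(w_i\cdot h_{(2)})$ one sees each $v_i\cdot h$ lies in the finite-dimensional space $(\textrm{id}\odot W^{\circ})(\{\xi\cdot h\mid h\in H\})$ — wait, more carefully: since $\{\xi\cdot h\mid h\in H\}$ spans a finite-dimensional subspace $M\subseteq V\odot W$, and each $v_i\cdot h$ is obtained by applying a fixed finite family of slice maps $\textrm{id}\odot\omega$ to elements of $M$, the set $\{v_i\cdot h\mid h\in H\}$ spans a finite-dimensional space. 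Hence each $v_i\in V_{\textrm{fin}}$. By symmetry each $w_i\in W_{\textrm{fin}}$ as well, so $\xi\in V_{\textrm{fin}}\odot W_{\textrm{fin}}$.

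The one point requiring care — and the closest thing to an obstacle — is the bookkeeping with the slice maps: one must check that recovering $v_i$ from $\xi$ via a functional on the $W$-leg, and then tracking how this interacts with the diagonal $H$-action $\Delta$, genuinely keeps everything inside a fixed finite-dimensional subspace and does not secretly require infinitely many functionals. The clean way to phrase it is: fix a finite-dimensional $M\ni\xi$ invariant under the $H$-action, pick a finite basis $\{w_i\}$ for the (finite-dimensional) projection of $M$ onto the $W$-leg together with a complementary basis, and observe that the maps $v\odot w\mapsto (\textrm{id}\odot\omega_i)(v\odot w)$ restricted to $M$ have finite-dimensional image; all the $v_i\cdot h$ live there. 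No properties of $H$ beyond having a coproduct (to define the tensor-product module) and a counit are used; coassociativity is not even needed.
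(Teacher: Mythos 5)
Your first inclusion is fine. The gap is in the reverse inclusion, exactly at the point you yourself flag as ``requiring care'': the assertion that each $v_i\cdot h$ is obtained by applying slice maps $\textrm{id}\odot\omega$ to elements of $M=\textrm{span}\{\xi\cdot h\mid h\in H\}$. Slicing $\xi\cdot h$ with $\omega_j$ produces $\sum_i\omega_j(w_i\cdot h_{(2)})\,(v_i\cdot h_{(1)})$, not $v_j\cdot h$: the diagonal action entangles the two legs, and there is no element of $M$ whose slice is $v_j\cdot h$. What is actually needed is the containment $v_j\cdot h\in V_0$, where $V_0=(\textrm{id}\odot W^{\circ})(M)$ is the finite-dimensional left support of $M$; this is a statement requiring proof, not bookkeeping. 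Your closing remark that only the coproduct and counit are used (``coassociativity is not even needed'') is the tell-tale symptom: the essential ingredient you have skipped is the antipode.

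The repair is as follows. For $k\in H$ let $\textrm{id}\odot R_k$ denote the action of $k$ on the $W$-leg alone; this map preserves $V_0\odot W$ for any subspace $V_0\subseteq V$. Using coassociativity and $h_{(2)}S(h_{(3)})=\varepsilon(h_{(2)})1$, one computes
\[(\textrm{id}\odot R_{S(h_{(2)})})(\xi\cdot h_{(1)})=\sum_i (v_i\cdot h_{(1)})\odot (w_i\cdot h_{(2)}S(h_{(3)}))=\sum_i(v_i\cdot h)\odot w_i.\]
Since $\xi\cdot h_{(1)}\in M\subseteq V_0\odot W$ and $\textrm{id}\odot R_{S(h_{(2)})}$ preserves $V_0\odot W$, the right-hand side lies in $V_0\odot W$, and slicing with your $\omega_j$ now legitimately yields $v_j\cdot h\in V_0$ for all $h$, whence $v_j\in V_{\textrm{fin}}$. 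Note also that the $W$-side is not literally ``by symmetry'': undoing the action on the \emph{first} leg after the diagonal action requires $h_{(2)}S^{-1}(h_{(1)})=\varepsilon(h)1$, i.e., invertibility of the antipode (automatic for Hopf $^*$-algebras such as $U_q(su(2))$, which is the only case the paper needs). With these insertions your argument closes up.
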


\noindent We also make the following remark. Let $(H,\Delta)$ be a Hopf ($^*$-)algebra, and let $A$ be a right module ($^*$-)algebra for $(H,\Delta)$. (The compatibility with the $^*$-structure means that $(a\cdot h)^* = a^*\cdot S(h)^*$). Let $V$ be a finite-dimensional vector space (resp. Hilbert space) with a left $H$-module structure by a ($^*$-preserving) unital homomorphism $\pi:H\rightarrow L(V)$. Then $A\odot L(V)$ can be made into a right module ($^*$-)algebra by the formula \[(a\otimes x)\cdot h := (a\cdot h_{(2)}) \otimes \pi(S(h_{(1)}))x\pi(h_{(3)}).\] If we are in the following situation: \begin{itemize} \item $(K,\Delta)$ is Hopf ($^*$-)algebra paired with $(H,\Delta)$ by a map $\iota_H: H\rightarrow K^{\circ}$ (with the compatibility \[\iota_H(h^*)(k) = \overline{\iota_H(h)(S(k)^*)}\] in the $^*$-case), \item if the module ($^*$-)algebra structure on $A$ is induced from a left comodule ($^*$-)algebra structure of $K$ on $A$, and \item if $\pi$ is induced from a (unitary) corepresentation $U\in K\odot L(V)$, \end{itemize} then \[(a\otimes x)\cdot h = (\iota_H(h)\otimes \iota\otimes \iota)(U_{13}^{-1}(\alpha(a)\otimes x)U_{13}).\] Also, in the general case, the module $A\odot L(V)$ is isomorphic to $V^{\circ}\odot A\odot V$ with the tensor module structure (where $V$ now carries the right $H$-module structure $v\cdot h := \pi(S(h))v$, and with $V^{\circ}$ endowed with the right module structure $\omega \cdot h := \omega(\pi(h)\,\cdot\,)$).

\subsection{Coactions}

\noindent We begin with the following remark on terminology. We will use the equivalent notions of \emph{(co)module algebra} and \emph{(co)action}, whenever one of them is more convenient. In the C$^*$-algebra context, we will always assume that the co-unit condition is satisfied, so that the coactions are continuous.\\

\noindent Our next remarks concern \emph{ergodic} coactions. We call a coaction $\alpha$ on a unital algebra $B$ \emph{ergodic} if the identity $\alpha(b)=b\otimes 1$ for some $b\in B$ implies that $b\in \mathbb{C}1$. If $\alpha$ is an ergodic coaction of a C$^*$-algebraic compact quantum group $(C(\mathbb{G}),\Delta)$ on a unital C$^*$-algebra $B$, we will write $B = C(\mathbb{X})$ for some formal symbol $\mathbb{X}$, and call it a `$\mathbb{G}$-homogeneous space'. We then denote by $\textrm{Pol}(\mathbb{X})$ the linear span of the finite-dimensional spectral subspaces of $C(\mathbb{X})$. It is a $^*$-algebra carrying a natural coaction of $\textrm{Pol}(\mathbb{G})$ by restricting $\alpha$. One also has a (unique) invariant (and faithful) state $\varphi_{\mathbb{X}}$ on $C(\mathbb{X})$, obtained by integrating out the coaction (so $\varphi_{\mathbb{X}}(x)1_{C(\mathbb{X})} = (\iota\otimes \varphi_{\mathbb{G}})\alpha(x)$ for all $x\in C(\mathbb{X})$, where $\varphi_{\mathbb{G}}$ is the invariant state on $C(\mathbb{G})$). Note that $C(\mathbb{X})$ is completely determined by $\textrm{Pol}(\mathbb{X})$, by our co-amenability assumption on $\mathbb{G}$ (see \cite{Li1}, Proposition 3.8).\\

\noindent The following result by F. Boca (\cite{Boc1}) is fundamental.

\begin{Theorem}\label{TheoFin} Let $\mathbb{X}$ be a homogeneous space for a compact quantum group $\mathbb{G}$. Then any irreducible representation of $\mathbb{G}$ appears in $C(\mathbb{X})$ with only finite multiplicity.
\end{Theorem}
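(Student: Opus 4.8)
The plan is to use the invariant faithful state $\varphi_{\mathbb{X}}$ on $C(\mathbb{X})$ to turn the spectral subspace associated with a fixed irreducible $\pi$ into a pre-Hilbert space on which the full C$^*$-algebra $C(\mathbb{X})$ acts, and then bound its dimension by a finiteness/compactness argument. Concretely, fix an irreducible corepresentation $\pi$ of $\mathbb{G}$ on a finite-dimensional Hilbert space $H_\pi$, and let $K_\pi\subseteq C(\mathbb{X})$ denote the $\pi$-spectral subspace, i.e.\ the set of $x$ (or rather the finite collection of matrix components of $\pi$-eigenvectors) transforming according to $\pi$ under $\alpha$. The multiplicity of $\pi$ is the number $\dim \mathrm{Hom}_{\mathbb{G}}(H_\pi, C(\mathbb{X}))$, equivalently the dimension of the space of $\pi$-eigenvectors divided by $\dim H_\pi$. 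The key device is the invariant state: define a sesquilinear form on the $\pi$-eigenvectors by $\langle x, y\rangle = \varphi_{\mathbb{X}}(y^*x)$. Because $\varphi_{\mathbb{X}}$ is faithful, this is a genuine inner product, and by the Peter--Weyl / orthogonality relations for $\varphi_{\mathbb{G}}$, eigenvectors belonging to inequivalent irreducibles are orthogonal and the $\pi$-block decomposes into copies of $H_\pi$ in a controlled way.

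First I would set up the GNS representation of $C(\mathbb{X})$ with respect to $\varphi_{\mathbb{X}}$, giving a Hilbert space $L^2(\mathbb{X})$ with cyclic vector $\Omega = \widehat{1}$. The coaction $\alpha$ is implemented on $L^2(\mathbb{X})$ by a unitary corepresentation $V\in M(C(\mathbb{G})\otimes \mathcal{K}(L^2(\mathbb{X})))$ (the "multiplicative-unitary"–type implementation coming from $(\iota\otimes\varphi_{\mathbb{G}})$ applied appropriately), which restricts on $\mathrm{Pol}(\mathbb{X})$ to the algebraic coaction. Decomposing $L^2(\mathbb{X})$ under this corepresentation, the $\pi$-isotypic component is $H_\pi\otimes M_\pi$ where $M_\pi$ is the multiplicity space, and the statement to prove is exactly $\dim M_\pi<\infty$. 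The next step is to use ergodicity: the fixed-point space is $\mathbb{C}\Omega$, so the trivial representation has multiplicity one, and one leverages this together with the module structure of $\mathrm{Pol}(\mathbb{X})$ over itself. The crucial inequality is that for any $\pi$-eigenvector $x$, the element $(\iota\otimes\varphi_{\mathbb{G}})(\alpha(x^*)\alpha(x)) = \varphi_{\mathbb{X}}(x^*x)\,1$ controls the "size" of $x$, and using the orthogonality relations one shows $\sum$ over an orthonormal basis of $M_\pi$ of $\|x_i^*x_i\|$-type quantities is dominated by a fixed multiple of $\varphi_{\mathbb{X}}(1)=1$; since each term is bounded below by a positive constant depending only on $\pi$ (via the quantum dimension of $\pi$ appearing in the orthogonality relations), the index set must be finite.

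The step I expect to be the main obstacle is making the last estimate precise — i.e.\ extracting a uniform positive lower bound on the "norm per copy of $\pi$" so that infinitely many copies would violate the normalization $\varphi_{\mathbb{X}}(1)=1$. This is where the faithfulness of $\varphi_{\mathbb{X}}$ and the explicit form of the orthogonality relations for $\varphi_{\mathbb{G}}$ (with the $q$-deformed weights / quantum dimension) must be combined carefully; in the non-Kac case one has to be attentive to the modular automorphism and the fact that the orthogonality relations carry the operator $F_\pi$, so the relevant positive quantity is something like $\mathrm{Tr}(F_\pi^{-1})^{-1}\|\,\cdot\,\|$ rather than a naive trace. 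An alternative, perhaps cleaner route is operator-algebraic: realize $C(\mathbb{X})\rtimes\mathbb{G}$ (or the associated imprimitivity-type bimodule) and note that the multiplicity of $\pi$ in $C(\mathbb{X})$ equals the rank of a certain projection in $\mathcal{K}(H_\pi)\otimes(C(\mathbb{X})\rtimes\mathbb{G})$, which is automatically finite because the crossed product is unital (for $\mathbb{G}$ compact and $B$ unital) and finitely generated projective modules over a unital C$^*$-algebra have finite rank; this reduces the whole theorem to the standard fact that the coinvariants $(C(\mathbb{G})\otimes C(\mathbb{X}))^{\mathbb{G}}$ in each isotypic block form a finitely generated projective module. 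I would present the GNS/orthogonality-relations proof as the primary one, since it is self-contained given only the existence and faithfulness of $\varphi_{\mathbb{X}}$, and mention the crossed-product viewpoint as a remark connecting to \cite{Tom1}.
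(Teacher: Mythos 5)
The paper offers no proof of this statement: it is quoted directly from Boca's article \cite{Boc1}, so there is nothing internal to compare against, and your proposal has to stand on its own. Your framework — the faithful invariant state $\varphi_{\mathbb{X}}$, the GNS space, ergodicity, and the orthogonality relations with the $F_\pi$-weights — is indeed the right one and is the one Boca uses. But the step you yourself flag as the main obstacle is a genuine gap, and the fix you propose (a uniform positive lower bound on the ``norm per copy of $\pi$'' depending \emph{only on $\pi$}, via the quantum dimension) is not available. Concretely: on the multiplicity space $W_\pi$ of $\pi$-multiplets $(x_1,\dots,x_n)$, ergodicity produces \emph{two} scalar-valued inner products, $\sum_j x_j y_j^\ast=\langle x,y\rangle_r\,1$ (from unitarity of $u^\pi$) and $\sum_{i,j}(F_\pi^{-1})_{ij}\,y_i^\ast x_j=\langle x,y\rangle_\ell\,1$ (from unitarizability of the conjugate). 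For an $\langle\cdot,\cdot\rangle_r$-orthonormal family $\{x^{(a)}\}$ the elements $v_a=\sum_j x_j^{(a)}\otimes e_{1j}$ satisfy $v_av_b^\ast=\delta_{ab}\,1\otimes e_{11}$, so the $v_a^\ast v_a$ are \emph{mutually orthogonal projections} in $C(\mathbb{X})\otimes M_n(\mathbb{C})$, and applying $\varphi_{\mathbb{X}}\otimes\mathrm{Tr}$ gives $\sum_a\langle x^{(a)},x^{(a)}\rangle_\ell\le\mathrm{Tr}(F_\pi^{-1})$. That is the precise form of your inequality, and it is correct — but it does not finish the proof. The positive operator $T$ with $\langle x,y\rangle_\ell=\langle Tx,y\rangle_r$ depends on the action, not only on $\pi$; faithfulness of $\varphi_{\mathbb{X}}$ gives $T>0$ but not $T\ge c>0$, and an injective positive operator with finite trace exists perfectly well on an infinite-dimensional space. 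So ``each term bounded below by a constant depending only on $\pi$'' is exactly the assertion that fails.

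The missing idea is that the estimate must be run a second time with the two inner products interchanged: an $\langle\cdot,\cdot\rangle_\ell$-orthonormal family and the other orthogonality relation yield the reverse bound, so that (in the appropriate sense) both $T$ and $T^{-1}$ have finite trace, bounded by $\mathrm{Tr}(F_\pi^{-1})$ and $\mathrm{Tr}(F_\pi)$ respectively. Since $\bigl(\sum_i t_i\bigr)\bigl(\sum_i t_i^{-1}\bigr)\ge d^2$ for a positive operator with eigenvalues $t_i$ on a $d$-dimensional space, this forces $\dim W_\pi\le\sqrt{\mathrm{Tr}(F_\pi)\,\mathrm{Tr}(F_\pi^{-1})}=\dim_q(\pi)$, which is Boca's theorem with its sharp bound. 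Your alternative crossed-product route is circular as stated: the claim that the $\pi$-isotypic block is a finitely generated projective module (equivalently, the rank of a genuine projection rather than merely a closed submodule) over the fixed-point algebra — which here is $\mathbb{C}$ — is essentially equivalent to the finite-multiplicity statement, so it cannot be invoked to prove it.
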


\noindent The following lemma will also be used at some point.

\begin{Lem}\label{LemvN} Let $\mathbb{G}$ be a compact quantum group, $\mathscr{H}$ a Hilbert space, and let $B\subseteq B(\mathscr{H})$ be a (not necessarily closed) unital sub-$^*$-algebra with a coaction $\alpha_B$ by $\textrm{Pol}(\mathbb{G})$. Assume that there exists a normal state $\omega$ in $B(\mathscr{H})_*$ whose restriction to $B$ is faithful and $\alpha_B$-invariant. Then if $A\subseteq B$ is a unital sub-$^*$-algebra for which \begin{itemize}
\item $\alpha_B$ restricts to an \emph{ergodic} coaction of $\textrm{Pol}(\mathbb{G})$ on $A$, and
\item the weak closures of $A$ and $B$ coincide,\end{itemize}
then $A=B$.\end{Lem}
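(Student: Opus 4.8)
The plan is to reduce the statement to the equality of the individual isotypic components of $A$ and $B$, and then to play the faithful invariant state off against Kaplansky's density theorem.

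First I would set $\psi := \omega|_B$; this is a faithful, $\alpha_B$-invariant state on $B$, so $\langle x,y\rangle := \psi(x^*y)$ is a genuine inner product on $B$. Since $\textrm{Pol}(\mathbb{G})$ is cosemisimple, I decompose $B = \bigoplus_\pi B_\pi$ into isotypic components, indexed by the irreducible corepresentations $\pi$ of $\mathbb{G}$; because $\alpha_B$ restricts to $A$, one gets $A = \bigoplus_\pi A_\pi$ with $A_\pi = A\cap B_\pi \subseteq B_\pi$, so it suffices to show $A_\pi = B_\pi$ for every $\pi$. Two standard facts about this decomposition enter: (i) distinct isotypic components are orthogonal for $\langle\cdot,\cdot\rangle$ — this follows from the invariance of $\psi$, the fact that $\alpha_B$ is a $^*$-homomorphism, and the Peter--Weyl orthogonality relations $\varphi_{\mathbb{G}}((u^\pi)^*u^{\pi'}) = 0$ for $\pi\not\cong\pi'$; and (ii) each $A_\pi$ is finite-dimensional — since $A$ carries an ergodic coaction together with a faithful invariant state, its norm-closure $\bar A \subseteq B(\mathscr{H})$ is again a $\mathbb{G}$-homogeneous space (the algebraic coaction extends continuously to $\bar A$, see e.g.\ \cite{Li1}, and ergodicity passes to the closure), so Theorem \ref{TheoFin} applies to $\bar A$ and bounds in particular the multiplicities in $A$.

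Now fix $\pi$ and suppose, for a contradiction, that $A_\pi \subsetneq B_\pi$. By (ii), $A_\pi$ is a finite-dimensional, hence closed, subspace of the pre-Hilbert space $B_\pi$, so there is a nonzero $b \in B_\pi$ orthogonal to $A_\pi$; by (i), $b$ is then orthogonal to all of $A$, i.e.\ $\psi(a^*b) = 0$ for every $a\in A$. I contradict this using the coincidence of the weak closures. Put $M := \overline{A}^{\,w} = \overline{B}^{\,w} \subseteq B(\mathscr{H})$; since $b\in B \subseteq M$ and $A$ is a weakly dense unital $^*$-subalgebra of $M$, Kaplansky's density theorem provides a bounded net $(a_\lambda)$ in $A$ with $\|a_\lambda\|\le\|b\|$ converging to $b$ in the strong${}^*$ operator topology, so $a_\lambda^*\to b^*$ strongly and the bounded net $a_\lambda^*b$ converges to $b^*b$ weakly, hence $\sigma$-weakly. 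As $\omega$ is normal it is $\sigma$-weakly continuous, so $\omega(a_\lambda^*b)\to\omega(b^*b) = \psi(b^*b)$; but each $a_\lambda^*b$ lies in $B$, and $\omega(a_\lambda^*b) = \psi(a_\lambda^*b) = \langle a_\lambda, b\rangle = 0$ because $b\perp A$. Hence $\psi(b^*b) = 0$, and faithfulness of $\psi$ forces $b=0$, a contradiction. Therefore $A_\pi = B_\pi$ for all $\pi$, and summing over $\pi$ gives $A = B$.

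The orthogonality of isotypic components, the extension of the coaction to $\bar A$, and the precise bookkeeping in the Kaplansky step are all routine and would be dispatched briefly. The step I expect to be the real crux is the passage through the weak closure: one must produce an \emph{bounded} approximating net (so that the normal state controls the limit) whose approximation is good enough that $a_\lambda^*b \to b^*b$ $\sigma$-weakly — this is exactly where Kaplansky density and, crucially, the hypothesis that $\omega$ is \emph{normal} on all of $B(\mathscr{H})$ (not merely a state on $B$) are used.
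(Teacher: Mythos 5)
Your proposal is correct and follows essentially the same route as the paper: decompose into isotypic components, use Boca's theorem to find a nonzero $b\in B_\pi$ orthogonal to $A$ with respect to the inner product induced by the faithful invariant state, and then use normality of $\omega$ to push the orthogonality through the common weak closure to contradict faithfulness. The only difference is that you spell out explicitly (via Kaplansky density and a bounded strong${}^*$-convergent net) the step the paper compresses into ``as $\omega$ is normal, we would then get $\omega(xy)=0$ for all $y\in A''=B''$'', which is a welcome elaboration rather than a new idea.
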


\begin{proof} Suppose that $B\neq A$. We may then take an irreducible representation $\pi$ of $\mathbb{G}$ and a non-zero element $x\in B_{\pi}$, the spectral subspace for $\pi$ in $B$, such that $x\notin A$. As $A_{\pi}$ is finite-dimensional by Boca's theorem, we may moreover assume that $x$ is orthogonal to $A_{\pi}$, and hence to $A$ (where $A$ is equipped with the pre-Hilbert space structure $\langle a',a\rangle := \omega(a^*a')$). But as $\omega$ is normal, we would then get $\omega(xy)=0$ for all $y \in A'' = B''$. Clearly this gives a contradiction with the faithfulness of $\omega$.
\end{proof}

\subsection{Morita equivalence for coactions}\label{SubMor}

\noindent Let $\alpha_i$ be left coactions of $C(\mathbb{G})$ on unital C$^*$-algebras $B_i$. One says the $B_i$ are \emph{$\mathbb{G}$-Morita equivalent} if there exists a unital C$^*$-algebra $E$ with a left coaction $\alpha$, together with a $\mathbb{G}$-invariant self-adjoint projection $e$, such that, denoting $e_1 = e$ and $e_2=1-e$, we have that $Ee_1E$ and $Ee_2E$ are norm-dense in $E$, and $e_iEe_i\cong B_i$ by a $\mathbb{G}$-covariant isomorphism. Alternatively, it is more common to define the $B_i$ to be $\mathbb{G}$-Morita equivalent if there exists an equivariant $B_1$-$B_2$-equivalence Hilbert bimodule (see e.g.~ the remark after Theorem 2.5 in \cite{Nes1}). The equivalence of the latter definition with the above `linking algebra' picture is well-known and easily proven. It is also easily shown that $\mathbb{G}$-Morita equivalence is indeed an equivalence relation. \\

\noindent If the $\alpha_i$ are ergodic, and we write $B_i = C(\mathbb{X}_i)$, we will also call the $\mathbb{X}_i$ themselves $\mathbb{G}$-Morita equivalent.\\

\noindent The following results can be deduced from the ones in section 4 of \cite{Tom1}.

\begin{Prop}\label{LemMor} Let the $\mathbb{X}_i$ be two $\mathbb{G}$-homogeneous quantum spaces. The following are equivalent.
 \begin{itemize} \item The $\mathbb{X}_i$ are $\mathbb{G}$-Morita equivalent.
  \item There exists a finite-dimensional unitary corepresentation $U$ of $C(\mathbb{G})$ on a Hilbert space $\mathscr{H}$ and a $\mathbb{G}$-invariant projection $p\in C(\mathbb{X}_2)\otimes B(\mathscr{H})$ such that \[C(\mathbb{X}_1) \cong p(C(\mathbb{X}_2)\otimes B(\mathscr{H}))p\] by a $\mathbb{G}$-equivariant isomorphism.
  \end{itemize}
\end{Prop}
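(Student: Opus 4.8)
\noindent\emph{Proof strategy.} The plan is to prove the two implications separately, in both cases exploiting the correspondence between $\mathbb{G}$-Morita equivalence and full $\mathbb{G}$-invariant corners of a linking algebra.

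For ``corner picture $\Rightarrow$ Morita equivalence'' the key observation is that the invariant projection $p$ is automatically full. I would first record that ergodicity of $\alpha_2$, via a slicing argument with the faithful invariant state $\varphi_{\mathbb{X}_2}$, forces $C(\mathbb{X}_2)$ — and hence also $M_{1+\dim\mathscr{H}}(C(\mathbb{X}_2))\cong C(\mathbb{X}_2)\otimes B(\mathbb{C}\oplus\mathscr{H})$ — to have no non-trivial $\mathbb{G}$-invariant closed ideals. One then assembles an explicit linking algebra: inside $A:=C(\mathbb{X}_2)\otimes B(\mathbb{C}\oplus\mathscr{H})$ with the coaction $\alpha_2\otimes\mathrm{Ad}(1\oplus U)$, let $p_0$ be the rank-one projection onto the $\mathbb{C}$-summand (it commutes with $1\oplus U$, so $1\otimes p_0$ is $\mathbb{G}$-invariant and orthogonal to $p$), and take $E:=qAq$ with $q:=p+1\otimes p_0$ and corner projections $e_1:=p$, $e_2:=1\otimes p_0$. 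Then $e_1Ee_1\cong C(\mathbb{X}_1)$ and $e_2Ee_2\cong C(\mathbb{X}_2)$ equivariantly, $E$ is a full corner of $A$ (so it too has no non-trivial invariant ideals), and non-vanishing of $e_1,e_2$ gives $\overline{Ee_1E}=\overline{Ee_2E}=E$; hence $E$ is the desired equivariant linking algebra.

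For the converse, start from an equivariant linking algebra $(E,\alpha)$ with invariant $e_1,e_2=1-e_1$ and $e_iEe_i\cong C(\mathbb{X}_i)$, and consider $M:=e_1Ee_2$. The heart of the argument is to build a $\mathbb{G}$-equivariant finite frame in $M$. I would proceed in four steps. (i) Since $\overline{Ee_2E}=E$ we get $\overline{MM^*}=e_1Ee_1\cong C(\mathbb{X}_1)$, and unitality of the latter yields finitely many $w_i\in M$ with $\sum_iw_iw_i^*$ invertible in $C(\mathbb{X}_1)$. (ii) Using density of the $\mathbb{G}$-finite vectors in $E$, replace the $w_i$ by $\mathbb{G}$-finite ones; the $\alpha$-invariant subspace $V\subseteq M$ they span is then finite-dimensional, and $\sum vv^*$ over a basis of $V$ is still invertible. (iii) Choose a basis $v_1,\dots,v_m$ of $V$ making the corepresentation $u\in M_m(C(\mathbb{G}))$ with $\alpha(v_j)=\sum_kv_k\otimes u_{kj}$ unitary — possible since every finite-dimensional corepresentation of $SU_q(2)$ is equivalent to a unitary one. (iv) Now $\sum_jv_jv_j^*$ is $\mathbb{G}$-invariant (this uses $uu^*=1$) and invertible, hence a positive scalar by ergodicity of $C(\mathbb{X}_1)$; rescaling yields a genuine frame $\sum_jv_jv_j^*=e_1$ with the same unitary $u$. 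With $\mathscr{H}:=\mathbb{C}^m$ carrying the unitary corepresentation $U:=u$, I would then form $w:=(v_1,\dots,v_m)$, viewed as the ``row'' block of $(e_1\otimes p_0)\big(E\otimes B(\mathbb{C}\oplus\mathscr{H})\big)(e_2\otimes(1-p_0))$ running from the $e_2\otimes\mathscr{H}$-corner to the $e_1\otimes\mathbb{C}$-corner; a direct check gives $ww^*=e_1\otimes p_0$, shows that $p:=w^*w=(v_i^*v_j)_{ij}\in C(\mathbb{X}_2)\otimes B(\mathscr{H})$ is a projection, and — again using $uu^*=1$ — shows that $w$ is invariant for $\alpha\otimes\mathrm{Ad}(1\oplus U)$. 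Consequently $p$ is $\mathbb{G}$-invariant and $x\mapsto w^*xw$ realizes $C(\mathbb{X}_1)\cong p\,(C(\mathbb{X}_2)\otimes B(\mathscr{H}))\,p$ equivariantly, with $C(\mathbb{X}_2)\otimes B(\mathscr{H})$ carrying $\alpha_2\otimes\mathrm{Ad}(U)$.

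The step I expect to be the main obstacle is the frame construction, steps (i)--(iv): the frame must be finite, must span a finite-dimensional invariant subspace, must have unitary associated corepresentation, and must be exactly normalised to $e_1$, and these demands interfere — e.g. one cannot simply rescale a $\mathbb{G}$-finite frame and keep it $\mathbb{G}$-finite. This is exactly why ergodicity of $C(\mathbb{X}_1)$ has to be brought in, to force the normalising element to be a scalar. As indicated after the statement, all of this can alternatively be extracted from section 4 of \cite{Tom1}.
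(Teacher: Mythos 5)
Your proposal is correct and follows essentially the same route as the paper's sketch: in both directions the key point is ergodicity --- of $C(\mathbb{X}_1)$ to normalise a $\mathbb{G}$-finite frame in the off-diagonal corner $e_1Ee_2$ into an exact one (yielding the invariant projection $p=w^*w$ and the isomorphism $x\mapsto w^*xw$), and of $C(\mathbb{X}_2)$ to force fullness of $p$. The only cosmetic differences are that you obtain fullness via the absence of non-trivial $\mathbb{G}$-invariant closed ideals rather than by slicing $p$ with a well-chosen invariant state as the paper does, and that in step (iii) you should invoke unitarisability of finite-dimensional corepresentations for an arbitrary compact quantum group (Woronowicz), not just for $SU_q(2)$, since the proposition is stated for general $\mathbb{G}$.
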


\noindent Here $C(\mathbb{X}_2)\otimes B(\mathscr{H})$ is again equipped with the coaction $x\rightarrow U_{13}^*(\alpha\otimes \iota)(x)U_{13}$.\\

\noindent To prove $\Rightarrow$, take a $\mathbb{G}$-equivariant equivalence Hilbert bimodule $(\mathcal{E},\alpha_{\mathcal{E}})$ between $C(\mathbb{X}_1)$ and $C(\mathbb{X}_2)$, a suitable unitary left corepresentation $U$ of $C(\mathbb{G})$ and non-zero elements $x_i\in \mathcal{E}$ such that $\alpha_{\mathcal{E}}(x_i) = \sum_j U_{ij}^*\otimes x_j$. Then using the ergodicity of $C(\mathbb{X}_1)$, one shows that (possibly up to a scalar) the map \[\mathcal{E} \rightarrow C(\mathbb{X}_2)\otimes \mathscr{H}_U: \xi \rightarrow \sum_i \langle\xi,\xi_i\rangle_{C(\mathbb{X}_2)} \otimes e_i\] is a $\mathbb{G}$-equivariant isometry between $C(\mathbb{X}_2)$-Hilbert modules, where the range is equipped with the coaction $x\rightarrow U_{13}^*(\alpha\otimes \iota)(x)$. To prove $\Leftarrow$, the essential point is that for any $\mathbb{G}$-invariant projection $p$, the Hilbert module $p (C(\mathbb{X}_2)\otimes \mathscr{H})$ is still full (cf. \cite{Tom1}, Lemma 4.5). This will follow from the fact that $(\iota\otimes \omega)(p) \in C(\mathbb{G})$ is invariant for a well-chosen faithful state $\omega \in B(\mathscr{H})_*$ (namely an invariant functional for the action $x\rightarrow U(1\otimes x)U^*$ by $(\textrm{Pol}(\mathbb{G}),\Delta^{\textrm{op}})$).\\

\noindent The following lemma will allow us to determine Morita equivalences by an inductive process.

\begin{Lem}\label{LemInd} Let $\pi_1,\ldots,\pi_n$ be a generating set of irreducible representations of a compact quantum group $\mathbb{G}$ (i.e.~ any irreducible representation of $\mathbb{G}$ is contained in some power of $\oplus \pi_i$). Let $\mathbb{X}_1$ and $\mathbb{X}_2$ be two $\mathbb{G}$-homogeneous spaces. Then $\mathbb{X}_1$ and $\mathbb{X}_2$ are Morita equivalent iff there exists a finite set of $\mathbb{G}$-homogeneous spaces $\mathbb{Y}_1,\ldots \mathbb{Y}_m$ with \begin{itemize}
\item $\mathbb{Y}_1 \cong \mathbb{X}_1$ and $\mathbb{Y}_m \cong \mathbb{X}_2$,
\item for each $k\in \{1,2,\ldots,m-1\}$, there exists an $i\in \{1,2,\ldots n\}$ and a minimal $\mathbb{G}$-invariant projection $p\in C(\mathbb{Y}_k)\otimes B(\mathscr{H})$ such that \[C(\mathbb{Y}_{k+1}) \cong p(C(\mathbb{Y}_k)\otimes B(\mathscr{H}_{\pi_i}))p.\]
    \end{itemize}
\end{Lem}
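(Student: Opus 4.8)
The plan is to handle the two implications in turn. The implication ``$\Leftarrow$'' is immediate: by the ``$\Leftarrow$''-part of Proposition~\ref{LemMor} each step $C(\mathbb{Y}_{k+1}) \cong p(C(\mathbb{Y}_k)\otimes B(\mathscr{H}_{\pi_i}))p$ exhibits a $\mathbb{G}$-Morita equivalence between $\mathbb{Y}_k$ and $\mathbb{Y}_{k+1}$, and composing these (using that $\mathbb{G}$-Morita equivalence is transitive) yields $\mathbb{X}_1 \cong \mathbb{Y}_1 \sim \mathbb{Y}_m \cong \mathbb{X}_2$. For ``$\Rightarrow$'', suppose $\mathbb{X}_1$ and $\mathbb{X}_2$ are $\mathbb{G}$-Morita equivalent. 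By Proposition~\ref{LemMor} I may fix a finite-dimensional unitary corepresentation $U$ on $\mathscr{H}$ and a $\mathbb{G}$-invariant projection $p\in C(\mathbb{X}_1)\otimes B(\mathscr{H})$ (with the twisted coaction described after Proposition~\ref{LemMor}) such that $C(\mathbb{X}_2)\cong p(C(\mathbb{X}_1)\otimes B(\mathscr{H}))p$ equivariantly. Write $\mathcal{F} := (C(\mathbb{X}_1)\otimes B(\mathscr{H}))^{\mathbb{G}}$; since $B(\mathscr{H})$ is finite-dimensional and, by Boca's Theorem~\ref{TheoFin}, every irreducible corepresentation has finite multiplicity in $C(\mathbb{X}_1)$, $\mathcal{F}$ is a finite-dimensional $C^*$-algebra, and since $C(\mathbb{X}_2)$ is ergodic, $p$ is a minimal projection of $\mathcal{F}$. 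Throughout I would use two elementary facts: (i) for an invariant projection $q\in\mathcal{F}$, the fixed-point algebra of $q(C(\mathbb{X}_1)\otimes B(\mathscr{H}))q$ equals $q\mathcal{F}q$, so that corner is ergodic exactly when $q$ is minimal in $\mathcal{F}$; and (ii) if $q_1,q_2\in\mathcal{F}$ are Murray--von Neumann equivalent through a partial isometry $v\in\mathcal{F}$, then $\mathrm{Ad}(v)$ is a $\mathbb{G}$-equivariant isomorphism of the corners $q_i(C(\mathbb{X}_1)\otimes B(\mathscr{H}))q_i$, because $v$ is invariant.

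First I would reduce $U$ to a tensor word in the generators. Decompose $U\cong\bigoplus_{r=1}^R\rho_r$ into irreducibles; since $\pi_1,\dots,\pi_n$ generate, each $\rho_r$ is contained in a word $\mathscr{K}_r := \mathscr{H}_{\pi_{i_1^r}}\otimes\cdots\otimes\mathscr{H}_{\pi_{i_{N_r}^r}}$, so $\mathscr{H}$ embeds $\mathbb{G}$-equivariantly into $\mathscr{M} := \bigoplus_r\mathscr{K}_r$. Transporting $p$ along the resulting equivariant identification of $C(\mathbb{X}_1)\otimes B(\mathscr{H})$ with a corner of $C(\mathbb{X}_1)\otimes B(\mathscr{M})$ gives a minimal invariant projection of $\mathcal{F}_{\mathscr{M}} := (C(\mathbb{X}_1)\otimes B(\mathscr{M}))^{\mathbb{G}}$ whose corner is again $C(\mathbb{X}_2)$. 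Now $f_r := 1\otimes(\text{projection of }\mathscr{M}\text{ onto }\mathscr{K}_r)$ are mutually orthogonal projections in $\mathcal{F}_{\mathscr{M}}$ summing to $1$; as our minimal projection has central support equal to the unit of a single matrix block of $\mathcal{F}_{\mathscr{M}}$, some $f_{r_0}$ has nonzero component in that block and hence dominates a rank-one projection $p'$ Murray--von Neumann equivalent to it. By (ii), $C(\mathbb{X}_2)\cong p'(C(\mathbb{X}_1)\otimes B(\mathscr{M}))p'$; and since $p'\le f_{r_0}$ with $f_{r_0}(C(\mathbb{X}_1)\otimes B(\mathscr{M}))f_{r_0} = C(\mathbb{X}_1)\otimes B(\mathscr{K}_{r_0})$ equivariantly, I conclude that $C(\mathbb{X}_2)$ is a corner of $C(\mathbb{X}_1)\otimes B(\mathscr{H}_{\pi_{j_1}}\otimes\cdots\otimes\mathscr{H}_{\pi_{j_N}})$ by a minimal $\mathbb{G}$-invariant projection, for some word $\pi_{j_1},\dots,\pi_{j_N}$ in the generators.

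Next I would peel off the factors one at a time, via a sub-lemma: if $\mathbb{Y}$ is a homogeneous space, $\mathscr{K},\mathscr{L}$ finite-dimensional corepresentations and $r$ a minimal invariant projection of $C(\mathbb{Y})\otimes B(\mathscr{K})\otimes B(\mathscr{L})$, then the associated homogeneous space is a corner of $C(\mathbb{Y}')\otimes B(\mathscr{L})$ by a minimal invariant projection, where $\mathbb{Y}'$ is itself such a corner of $C(\mathbb{Y})\otimes B(\mathscr{K})$. For this, note that $(C(\mathbb{Y})\otimes B(\mathscr{K}))^{\mathbb{G}}\otimes 1_{\mathscr{L}}$ is a unital subalgebra of $\mathcal{F}' := (C(\mathbb{Y})\otimes B(\mathscr{K})\otimes B(\mathscr{L}))^{\mathbb{G}}$; writing $1$ as a sum of minimal projections $s_\iota$ of $(C(\mathbb{Y})\otimes B(\mathscr{K}))^{\mathbb{G}}$ and running the central-support argument of the previous paragraph inside $\mathcal{F}'$, one finds an $s := s_{\iota_0}$ such that $r$ is Murray--von Neumann equivalent in $\mathcal{F}'$ to a subprojection of $s\otimes 1$; by (ii) I may assume $r\le s\otimes 1$, and then $(s\otimes 1)\mathcal{F}'(s\otimes 1) = \bigl(s(C(\mathbb{Y})\otimes B(\mathscr{K}))s\otimes B(\mathscr{L})\bigr)^{\mathbb{G}}$ with $\mathbb{Y}' := s(C(\mathbb{Y})\otimes B(\mathscr{K}))s$ ergodic (as $s$ is minimal), so $r$ is a minimal invariant projection of $C(\mathbb{Y}')\otimes B(\mathscr{L})$. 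Applying this $N$ times to the corner found above, peeling off $\mathscr{H}_{\pi_{j_N}},\mathscr{H}_{\pi_{j_{N-1}}},\dots,\mathscr{H}_{\pi_{j_1}}$ in succession, produces homogeneous spaces $\mathbb{X}_1\cong\mathbb{Y}_1,\mathbb{Y}_2,\dots,\mathbb{Y}_{N+1}\cong\mathbb{X}_2$ with each $C(\mathbb{Y}_{k+1})$ a corner of $C(\mathbb{Y}_k)\otimes B(\mathscr{H}_{\pi_{j_k}})$ by a minimal $\mathbb{G}$-invariant projection, which is exactly the asserted chain.

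I expect the main obstacle to be organisational rather than conceptual: the substance of both reductions is the single remark that equivariant corners of $C(\mathbb{X})\otimes B(\mathscr{H})$ are governed by the finite-dimensional fixed-point algebra, so that a minimal projection there is Murray--von Neumann equivalent to a subprojection of any projection meeting its central support, and such an equivalence, implemented by an invariant partial isometry, transports one corner equivariantly onto the other. The one genuine point still to be checked is that, upon restricting to corners of the shape $C(\mathbb{Y}')\otimes B(\mathscr{L})$, the twisted coactions match up as claimed; this is routine in the leg notation, the factor $U^{\mathscr{L}}$ acting only on the $C(\mathbb{G})$- and $\mathscr{L}$-legs and hence commuting with the projection $s\otimes 1$ that carves out $\mathbb{Y}'$.
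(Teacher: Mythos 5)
Your argument is correct and follows essentially the same route as the paper's (very brief) sketch: the paper's two ``basic observations'' --- that composing corners corresponds to tensoring representations, and that a subrepresentation $\pi_1\subseteq\pi_2$ realizes $C(\mathbb{Y})\otimes B(\mathscr{H}_{\pi_1})$ as a corner of $C(\mathbb{Y})\otimes B(\mathscr{H}_{\pi_2})$ --- are exactly what you use to embed $U$ into a direct sum of tensor words in the generators and then to peel the factors off one at a time. Your additional details (finite-dimensionality of the fixed-point algebra via Boca's theorem, and the Murray--von Neumann argument transporting minimal invariant projections within a matrix block by an invariant partial isometry) correctly supply what the paper leaves implicit.
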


\noindent The proof is based on the previous proposition and two basic observations: \begin{itemize} \item If \[C(\mathbb{Y}_1) = p_1(C(\mathbb{Y}_2) \otimes B(\mathscr{H}_{\pi_1}))p_1\quad \textrm{and}\quad C(\mathbb{Y}_2)= p_2(C(\mathbb{Y}_3)\otimes B(\mathscr{H}_{\pi_2}))p_2,\] then with $p_3 = p_1(p_2\otimes 1) = (p_2\otimes 1)p_1$ we have \[C(\mathbb{Y}_1) = p_3(C(\mathbb{Y}_3) \otimes B(\mathscr{H}_{\pi_2\otimes \pi_1}))p_3,\] and
\item If $\pi_1\subseteq \pi_2$ with corresponding projection $p: \mathscr{H}_{\pi_2}\rightarrow \mathscr{H}_{\pi_1}$, then \[C(\mathbb{Y}) \otimes B(\mathscr{H}_{\pi_1}) = (1\otimes p)(C(\mathbb{Y}) \otimes B(\mathscr{H}_{\pi_2}))(1\otimes p).\]
\end{itemize}

\noindent Note that the above two results also (and more naturally) apply to the associated irreducible equivariant $C(\mathbb{X})$-Hilbert modules, i.e.~ any irreducible equivariant Hilbert $C(\mathbb{X})$-module appears as a component in some $C(\mathbb{X})\otimes \mathscr{H}_{\pi}$ for $\pi$ a finite-dimensional representation.\\

\begin{Prop} Let $\mathbb{G}$ be a compact quantum group, and $\mathbb{H}$ a quantum subgroup. Then we can form the $\mathbb{G}$-homogeneous quantum space $\mathbb{X}=\mathbb{H}\backslash \mathbb{G}$. Any $\mathbb{G}$-Morita equivalent homogeneous quantum space is then obtained by taking an irreducible unitary corepresentation $U$ of $C(\mathbb{H})$ on a Hilbert space $\mathscr{H}$, and inducing the associated $\mathbb{H}$-action on $B(\mathscr{H})$ to $\mathbb{G}$.\end{Prop}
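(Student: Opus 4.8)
The plan is to reduce the statement to the general theory of Morita equivalence for homogeneous spaces developed in the preceding subsection, specialized to the case of a coideal coming from a genuine quantum subgroup $\mathbb{H} \subseteq \mathbb{G}$. First I would recall the construction of $\mathbb{X} = \mathbb{H}\backslash\mathbb{G}$: one has the surjective Hopf $^*$-algebra morphism $r\colon \mathrm{Pol}(\mathbb{G}) \to \mathrm{Pol}(\mathbb{H})$, and $\mathrm{Pol}(\mathbb{X})$ is the subalgebra of left $\mathbb{H}$-invariants, i.e.\ $\{x \in \mathrm{Pol}(\mathbb{G}) \mid (r\otimes\iota)\Delta(x) = 1\otimes x\}$, equipped with the coaction obtained by restricting $\Delta$. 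This is an ergodic coaction, so $\mathbb{X}$ is a legitimate $\mathbb{G}$-homogeneous space. For any unitary corepresentation $U \in C(\mathbb{H})\odot B(\mathscr{H})$ of $\mathbb{H}$, the induced $\mathbb{H}$-action on $B(\mathscr{H})$ is $\mathrm{Ad}$-type, and the induction to $\mathbb{G}$ yields the C$^*$-algebra $B_U := \{ x \in C(\mathbb{G})\otimes B(\mathscr{H}) \mid (r\otimes\iota)_{(12)}(\Delta\otimes\iota)(x) = U_{13}^{-1} x_{23} U_{13}\text{ in the appropriate sense}\}$ with the coaction coming from $\Delta\otimes\iota$; when $U$ is trivial one recovers $C(\mathbb{X})$.

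Next I would show that every induced space $B_U$ is $\mathbb{G}$-Morita equivalent to $\mathbb{X}$ and, conversely, that every $\mathbb{G}$-homogeneous space Morita equivalent to $\mathbb{X}$ arises this way. For the forward direction I would use Proposition~\ref{LemMor}: starting from $C(\mathbb{X})$ and the $\mathbb{G}$-homogeneous (indeed $\mathbb{H}$-decomposable) corepresentation obtained by restricting the regular corepresentation to $\mathbb{X}$, one identifies $C(\mathbb{X})\otimes B(\mathscr{H}_\pi)$, for $\pi$ an irreducible $\mathbb{G}$-corepresentation, with a direct sum of algebras of the form $B_U$ over the irreducible $\mathbb{H}$-summands $U$ of $\pi|_{\mathbb{H}}$ — this is nothing but Frobenius reciprocity / the imprimitivity theorem at the level of $\mathrm{Pol}$. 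Picking out a minimal $\mathbb{G}$-invariant projection corresponds exactly to picking out one such $U$. Combined with Lemma~\ref{LemInd} and the fact that the irreducible corepresentations of $\mathbb{H}$ are obtained by decomposing restrictions of irreducibles of $\mathbb{G}$ (since $r$ is surjective), this yields that \emph{every} $\mathbb{G}$-space Morita equivalent to $\mathbb{X}$ is some $B_U$ with $U$ an irreducible $\mathbb{H}$-corepresentation.

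For the converse — that each $B_U$ is genuinely $\mathbb{G}$-Morita equivalent to $\mathbb{X}$ rather than to some other space — I would invoke the $\Leftarrow$ direction of Proposition~\ref{LemMor}: since $U$ appears inside $\pi|_{\mathbb{H}}$ for some finite-dimensional $\mathbb{G}$-corepresentation $\pi$, $B_U$ is realized as $p(C(\mathbb{X})\otimes B(\mathscr{H}_\pi))p$ for a $\mathbb{G}$-invariant projection $p$, and the proposition guarantees the resulting Hilbert module is full, hence gives an honest Morita equivalence. I would also note the consistency check that when $U$ is the trivial corepresentation of $\mathbb{H}$ one recovers $\mathbb{X}$ itself, and that equivalent corepresentations $U \cong U'$ give isomorphic $B_U \cong B_{U'}$, so the induced spaces are parametrized (possibly with repetitions) by $\widehat{\mathbb{H}}$.

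The main obstacle I anticipate is purely bookkeeping rather than conceptual: one must carefully set up the induced algebra $B_U$ at the Hopf-$^*$-algebra level, verify that its C$^*$-completion is the spectral-subspace span of an ergodic $\mathbb{G}$-C$^*$-algebra (using Boca's Theorem~\ref{TheoFin} for finiteness of multiplicities and the co-amenability assumption to pass between $\mathrm{Pol}$ and $C^*$), and match the two coactions under the imprimitivity isomorphism — the leg-numbering in the identity $(r\otimes\iota)_{(12)}(\Delta\otimes\iota)(x) = U_{13}^{-1}x_{23}U_{13}$ has to be tracked consistently with the conventions fixed in Section~\ref{SubMor}. None of this requires a new idea beyond Proposition~\ref{LemMor} and Lemma~\ref{LemInd}; it is the coaction-algebra analogue of the classical statement that the homogeneous spaces Morita equivalent to $H\backslash G$ are exactly the induced bundles $G\times_H \mathbb{P}(\mathscr{H})$ (equivalently the induced $\mathrm{Ad}$-actions on $B(\mathscr{H})$), which is the $q=1$ content flagged after Theorem~\ref{Theo2}.
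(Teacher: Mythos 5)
Your argument is correct, but it is not the route the paper takes: the paper disposes of this proposition in one line, via the chain of isomorphisms $K_0^{\mathbb{G}}(C(\mathbb{H}\backslash \mathbb{G})) \cong K_0(C(\mathbb{H}\backslash \mathbb{G})\rtimes \mathbb{G}) \cong K_0(C^*(\mathbb{H}))$, i.e.\ it appeals to the imprimitivity/Takesaki--Takai-type equivalence identifying equivariant Hilbert modules over $C(\mathbb{H}\backslash\mathbb{G})$ with representations of $\mathbb{H}$, so that the irreducible such modules are labelled by $\widehat{\mathbb{H}}$ and their endomorphism algebras are exactly the induced algebras $\mathrm{Ind}_{\mathbb{H}}^{\mathbb{G}}B(\mathscr{H}_U)$. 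You instead run the machinery of Proposition~\ref{LemMor} and Lemma~\ref{LemInd} directly: untwist $C(\mathbb{X})\otimes B(\mathscr{H}_\pi)$ into $\mathrm{Ind}_{\mathbb{H}}^{\mathbb{G}}\bigl(B(\mathscr{H}_{\pi|_{\mathbb{H}}})\bigr)$, use that induction commutes with passing to corners by invariant projections (so minimal $\mathbb{G}$-invariant projections correspond to minimal projections in $\mathrm{End}_{\mathbb{H}}(\mathscr{H}_\pi)$, hence to irreducible $\mathbb{H}$-summands $U$), and use surjectivity of $r$ to see that every $U\in\widehat{\mathbb{H}}$ occurs. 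This buys an explicit, self-contained argument in the language the paper has already set up, at the cost of more bookkeeping; the paper's K-theoretic remark is slicker but really presupposes the same imprimitivity equivalence that you are proving by hand. One step you should make explicit to close the induction in Lemma~\ref{LemInd}: the class of induced algebras $B_U$ must be shown \emph{closed} under the corner operation, i.e.\ a minimal invariant corner of $B_U\otimes B(\mathscr{H}_\pi)\cong \mathrm{Ind}_{\mathbb{H}}^{\mathbb{G}}\bigl(B(\mathscr{H}_{U\otimes \pi|_{\mathbb{H}}})\bigr)$ is again some $B_{U'}$ with $U'\subseteq U\otimes\pi|_{\mathbb{H}}$; your text only treats the base case $U$ trivial, though the same computation handles the general step.
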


\noindent One can use for example the isomorphisms $K_0^{\mathbb{G}}(C(\mathbb{H}\backslash \mathbb{G})) \cong K_0(C(\mathbb{H}\backslash \mathbb{G})\rtimes \mathbb{G}) \cong K_0(C^*(\mathbb{H}))$.\\

\subsection{Galois objects}

\begin{Def}[\cite{Sch1}] Let $(H,\Delta)$ be a Hopf ($^*$-)algebra, $A$ a unital ($^*$-)algebra, and $\alpha$ a right coaction of $(H,\Delta)$ on $A$. Denote $B=\{a\in A \mid \alpha(a)=a\otimes 1\}$, the fixed point algebra. One says $\alpha$ is Galois if the \emph{Galois map} \[G:A\underset{B}{\odot} A\rightarrow A\odot H: a\otimes a'\rightarrow (a\otimes 1)\alpha(a')\] is bijective.\\

\noindent One says $(A,\alpha)$ is a \emph{Galois object} if $\alpha$ is ergodic (i.e. $B= \mathbb{C}$).\end{Def}

\noindent For a Galois object, we write $S$ for the canonical anti-isomorphism $A^{\textrm{op}}\rightarrow A: a^{\textrm{op}}\rightarrow a$, and denote \[ h_{\lbrack 1\rbrack}\otimes h_{\lbrack 2\rbrack} := (S^{-1}\otimes \iota)(G^{-1}(1\otimes h))  \in A^{\textrm{op}}\odot A.\] The application $h\rightarrow h_{\lbrack 1\rbrack}\otimes h_{\lbrack 2\rbrack}$ is then a unital homomorphism. As for $H$ itself, one can make $A$ into a right $H$-module ($^*$-)algebra by means of the \emph{Miyashita-Ulbrich (or adjoint) action} \[ a\lhd h := S(h_{[1]})ah_{[2]}.\] Then $(A,\alpha,\lhd)$ is a right Yetter-Drinfel'd module (see \cite{Doi1}, or Lemma 2.9 of \cite{Sch1}).\\

\noindent A trivial example of a Galois object is given by (an isomorphic copy of) the Hopf ($^*$-)algebra itself, with the coaction given by the comultiplication. In fact, we will be mainly concerned with a particular Galois object for a Hopf $^*$-algebra \emph{which becomes trivial when forgetting the $^*$-structure}.\\

\noindent We record the following fact for later use.

\begin{Lem}\label{LemMod} Let $(A,\alpha)$ be a Galois object for the Hopf $^*$-algebra $(H,\Delta)$. \begin{enumerate}\item Let $\pi_B:A\rightarrow B$ be a unital $^*$-homomorphism. Then there exists a right $H$-module $^*$-algebra structure on $B$, determined by $b\lhd h = \pi_B(S(h_{[1]}))b\pi_B(h_{[2]})$.
\item Let $B$ be as above, and let moreover $(V,\pi)$ be a finite-dimensional left $H$-module. Then \[(\pi_B\otimes \pi)\alpha: A\rightarrow B\otimes L(V)\] is a morphism between right $H$-module $^*$-algebras. Moreover, the module $^*$-algebra structure on $B\otimes L(V)$ (as at the end of Section \ref{SecQG}) coincides with the one induced by this $^*$-homomorphism as in the first point.
\end{enumerate}
\end{Lem}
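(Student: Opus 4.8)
The plan is to prove (1) by a direct verification of the right $H$-module $^*$-algebra axioms, and to deduce (2) from (1) together with the standard identities for the coaction on the legs of the translation map. Write $\tau(h)=h_{[1]}\otimes h_{[2]}\in A^{\mathrm{op}}\odot A$, so that $S(h_{[1]})\otimes h_{[2]}=G^{-1}(1\otimes h)\in A\odot A$. Recall from the discussion before the lemma that $h\mapsto\tau(h)$ is a unital homomorphism, that $S(h_{[1]})h_{[2]}=\varepsilon(h)1$ and $S(1_{[1]})\otimes 1_{[2]}=1\otimes 1$, and that $a\lhd h=S(h_{[1]})ah_{[2]}$ makes $A$ a right $H$-module $^*$-algebra; the proof of this last fact uses two further standard translation-map identities, namely a coassociativity-type identity $\sum S((h_{(1)})_{[1]})\otimes\big((h_{(1)})_{[2]}\,S((h_{(2)})_{[1]})\big)\otimes(h_{(2)})_{[2]}=S(h_{[1]})\otimes 1\otimes h_{[2]}$ and a $^*$-identity $S((S(h)^*)_{[1]})\otimes(S(h)^*)_{[2]}=(h_{[2]})^*\otimes(S(h_{[1]}))^*$ (all products taken in $A$; see \cite{Sch1, Doi1}).

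For (1), observe that $b\lhd h:=\pi_B(S(h_{[1]}))\,b\,\pi_B(h_{[2]})$ arises from the formula for $a\lhd h$ by replacing the inner factor $a\in A$ with $b\in B$ and applying the unital $^*$-homomorphism $\pi_B$ to the remaining factors, which are products of elements of $A$ not involving $b$. Thus each axiom for $(B,\lhd)$ follows by feeding $\pi_B$ into the corresponding identity above. In more detail: because $\tau$ is a homomorphism, $h\mapsto\pi_B(S(h_{[1]}))\otimes\pi_B(h_{[2]})^{\mathrm{op}}$ is an anti-homomorphism $H\to B\odot B^{\mathrm{op}}$, and since $B\odot B^{\mathrm{op}}$ acts on $B$ on the left via $(b_1\otimes b_2^{\mathrm{op}})\cdot b=b_1bb_2$, the assignment $b\lhd h$ is a unital right $H$-action; next, $1_B\lhd h=\pi_B(S(h_{[1]})h_{[2]})=\varepsilon(h)1_B$; the Leibniz rule $(bb')\lhd h=\sum(b\lhd h_{(1)})(b'\lhd h_{(2)})$ follows from the coassociativity identity; and $(b\lhd h)^*=b^*\lhd S(h)^*$ follows from the $^*$-identity together with the fact that $\pi_B$ is $^*$-preserving. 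Each step is a one- or two-line computation.

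For (2), put $\psi:=(\pi_B\otimes\pi)\alpha\colon A\to B\otimes L(V)$; as in the setup at the end of Section \ref{SecQG}, $V$ is a finite-dimensional Hilbert space and $\pi$ is $^*$-preserving, so $\psi$ is a unital $^*$-homomorphism (a composite of the injective $^*$-homomorphism $\alpha\colon A\to A\odot H$ with $\pi_B\otimes\pi$). Applying (1) to $\psi$ produces a right $H$-module $^*$-algebra structure $\lhd'$ on $B\otimes L(V)$, and since $\psi$ is an algebra homomorphism, $\psi(a\lhd h)=\psi(S(h_{[1]}))\,\psi(a)\,\psi(h_{[2]})=\psi(a)\lhd'h$, so $\psi$ is automatically a morphism of right $H$-module $^*$-algebras from $(A,\lhd)$ to $(B\otimes L(V),\lhd')$. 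It then remains to identify $\lhd'$ with the structure of the end of Section \ref{SecQG}, i.e.\ to show $(b\otimes x)\lhd'h=(b\lhd h_{(2)})\otimes\pi(S(h_{(1)}))\,x\,\pi(h_{(3)})$, where the first factor uses the action of (1) and $S$ on $h_{(1)}$ is the Hopf antipode. Expanding $(b\otimes x)\lhd'h=\psi(S(h_{[1]}))(b\otimes x)\psi(h_{[2]})$ and writing $\alpha(S(h_{[1]}))=(S(h_{[1]}))_{(0)}\otimes(S(h_{[1]}))_{(1)}$ and $\alpha(h_{[2]})=(h_{[2]})_{(0)}\otimes(h_{[2]})_{(1)}$, one finds its $B$-component equals $\pi_B((S(h_{[1]}))_{(0)})\,b\,\pi_B((h_{[2]})_{(0)})$ and its $L(V)$-component equals $\pi((S(h_{[1]}))_{(1)})\,x\,\pi((h_{[2]})_{(1)})$; the desired formula then follows from the translation-map identity $(S(h_{[1]}))_{(0)}\otimes(S(h_{[1]}))_{(1)}\otimes(h_{[2]})_{(0)}\otimes(h_{[2]})_{(1)}=S((h_{(2)})_{[1]})\otimes S(h_{(1)})\otimes(h_{(2)})_{[2]}\otimes h_{(3)}$, which is the combination of the two standard formulas describing the coaction on the two legs of $\tau$. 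Taken together, these establish both assertions of (2).

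The only genuinely delicate point should be the bookkeeping with the translation map: one must keep in mind that $h_{[1]}$ lives in $A^{\mathrm{op}}$ (so $S(h_{[1]})$ appears on the \emph{left} once everything is multiplied out in $A$, and the order reverses under composition), and carefully distinguish the canonical anti-isomorphism $S\colon A^{\mathrm{op}}\to A$ from the Hopf antipode of $H$. Once the relevant translation-map identities are pinned down with this convention --- all of them standard, see \cite{Sch1, Doi1} --- the remaining computations are mechanical.
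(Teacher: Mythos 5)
Your proof is correct and follows essentially the same route as the paper: part (1) by the standard translation-map identities (the paper just cites the Hopf $^*$-algebra case), and the ``moreover'' part of (2) by exactly the identity the paper displays, namely $S(h_{[1]})_{(0)}\otimes h_{[2](0)}\otimes S(h_{[1]})_{(1)}\otimes h_{[2](1)}=S(h_{(2)[1]})\otimes h_{(2)[2]}\otimes S(h_{(1)})\otimes h_{(3)}$, with its proof deferred to Schauenburg's Lemma 2.1.7 in both cases. The only (harmless, slightly slicker) deviation is that you obtain the morphism claim of (2) formally from the ``moreover'' identification, whereas the paper derives it separately from the Yetter--Drinfel'd property of $(A,\alpha,\lhd)$.
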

\begin{proof} The first fact can be proven as in the Hopf $^*$-algebra case (see e.g.~ \cite{Kli1}, Lemma 5.5). For the first part of the second fact, use that $(A,\alpha,\lhd)$ is a right Yetter-Drinfel'd module. For the second part, the following identity for $h\in H$ will imply the claim: \[S(h_{(2)\lbrack 1\rbrack})\otimes h_{(2)\lbrack 2\rbrack} \otimes S(h_{(1)})\otimes h_{(3)} = S(h_{\lbrack 1\rbrack})_{(0)} \otimes h_{\lbrack 2\rbrack (0)} \otimes S(h_{\lbrack 1\rbrack})_{(1)}\otimes h_{\lbrack 2\rbrack (1)}.\] To prove this formula, apply e.g.~ the identities (2.1.3) and (2.1.2) from Lemma 2.1.7 of \cite{Sch1} to the right hand side.
\end{proof}

\noindent Also the following result which will be needed at some point, although only in a very simple situation.

\begin{Prop}\label{PropMor2} Let $\mathbb{G}$ be a compact quantum group, $B$ a unital C$^*$-algebra equipped with an action by $\mathbb{G}$, and $H$ a finite group (or even quantum group) which has a $\mathbb{G}$-equivariant Galois action on $B$. Then $B^{H}$ is $\mathbb{G}$-equivariantly Morita equivalent with $H\ltimes B$.\end{Prop}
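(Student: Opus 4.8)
\medskip
\noindent\textbf{Proof proposal.}

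The plan is to construct an explicit $\mathbb{G}$-equivariant equivalence Hilbert bimodule between $B^H$ and $E := H\ltimes B$, using the ``averaging'' projection coming from the Haar integral of $H$. Since $H$ is finite-dimensional, $H\ltimes B$ is again a unital C$^*$-algebra. Let $e\in E$ denote the image, under the canonical embedding of $H$, of the normalized Haar integral $\int$ of $H$; for an ordinary finite group this is simply $e=\frac{1}{|H|}\sum_{h}u_h$, with $u_h$ the implementing unitaries. As the Haar integral of a finite quantum group is a self-adjoint idempotent, $e$ is a projection in $E$. Because the $H$-action and the $\mathbb{G}$-coaction $\alpha$ on $B$ commute, $\alpha$ extends to a coaction $\alpha_E$ on $E$ acting as $\alpha$ on the copy of $B$ and trivially on the copy of $H$; in particular $e$ is $\mathbb{G}$-invariant, $\alpha_E(e)=e\otimes 1$.

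First I would identify the corner $eEe$ with $B^H$. Using the crossed-product commutation relations together with the integral identity $he=\varepsilon(h)e=eh$ (for $h\in H$), a direct computation gives $eEe=\{(b\lhd\int)\,e : b\in B\}=B^H e$, and shows that $x\mapsto xe$ is a unital $^*$-isomorphism from $B^H$ onto $eEe$; it intertwines $\alpha|_{B^H}$ with $\alpha_E|_{eEe}$ precisely because $e$ is $\mathbb{G}$-invariant. Hence $eEe\cong B^H$ as $\mathbb{G}$-C$^*$-algebras.

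Now consider $\mathcal{E}:=Ee$, viewed as a right Hilbert module over $eEe\cong B^H$ with inner product $\langle\xi,\eta\rangle:=\xi^*\eta$, and as a left Hilbert module over $E$ with inner product ${}_E\langle\xi,\eta\rangle:=\xi\eta^*$, the module actions being left, respectively right, multiplication, and the coaction being the restriction of $\alpha_E$ (which maps $Ee$ into $Ee\otimes C(\mathbb{G})$ exactly because $\alpha_E(e)=e\otimes 1$). Completeness is automatic, as $Ee$ is norm-closed in $E$; the two module structures and the two inner products are compatible by associativity of multiplication in $E$; and fullness of the $B^H$-valued inner product is trivial, since $e=\langle e,e\rangle$ is the unit of $eEe$. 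The one substantial point is fullness of the $E$-valued inner product, i.e.~that the two-sided ideal $EeE$ equals $E$ --- and this is exactly what the Galois hypothesis provides. For a group action it says that there are elements $a_i,b_i\in B$ with $\sum_i a_i\alpha_h(b_i)=\delta_{h,e}1$ for all $h\in H$ (equivalently, bijectivity of the canonical map $B\underset{B^H}{\odot}B\to\bigoplus_{h\in H}B$; in the quantum case one extracts the analogous data from surjectivity of the Galois map of the Definition above). Substituting the crossed-product relations $u_h b=\alpha_h(b)u_h$ into $\sum_i a_i\,e\,b_i=\tfrac{1}{|H|}\sum_{i,h}a_i u_h b_i$ and collecting terms collapses this element to $\tfrac{1}{|H|}1$; hence $1\in BeB\subseteq EeE$, so $EeE=E$. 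Thus $\mathcal{E}$ is a $\mathbb{G}$-equivariant $E$-$B^H$ equivalence bimodule, and by the Hilbert-bimodule formulation of Subsection \ref{SubMor} the algebras $B^H$ and $H\ltimes B$ are $\mathbb{G}$-equivariantly Morita equivalent.

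I expect the only genuine work --- the ``main obstacle'' --- to lie entirely in that last step in the fully quantum case: turning the bijectivity of the Galois map into the density of $EeE$ requires choosing the correct preimage $G^{-1}(1\otimes\,\cdot\,)$ of a suitable rank-one-type element and chasing it through the smash-product relations with the right normalizations of the integral and the antipode. For the application in this paper only $H=\mathbb{Z}_2$ is needed, where all of this is immediate.
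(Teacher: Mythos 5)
Your proof is correct and follows essentially the same route as the paper: the paper invokes the Kreimer--Takeuchi theorem to identify $H\ltimes B$ with $\mathrm{End}_{B^H}(B)$, and then restates this, in the remark following the proposition, precisely in your form, namely that $e=\frac{1}{|H|}\sum_{h}\lambda_h$ is a full $\mathbb{G}$-invariant projection in $H\ltimes B$ with $e(H\ltimes B)e\cong B^H$. Your direct verification that $1\in BeB$ from surjectivity of the Galois map is simply an in-line proof of the group case of the cited theorem, so the two arguments produce the same equivalence bimodule.
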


\noindent Indeed, by a well-known theorem concerning Galois extensions (\cite{Kre1}), we have that $H\ltimes B \cong \textrm{End}_{B^{H}}(B)$ by the natural homomorphism (where $B$ is considered just as a right $B^H$-module on the right hand side). Clearly this identification is compatible with the $^*$-structure and the $\mathbb{G}$-action, by assumption, leading to the stated equivariant Morita equivalence.\\

\noindent Restating the proposition in the form we will need it in, the above says that, under the given conditions, $p := \frac{1}{|H|}\sum_{h\in H} \lambda_h \in H\ltimes B$ will be a full projection. If moreover $H$ is abelian, and $\chi$ a character, then of course also $p_{\chi} = \frac{1}{|H|}\sum_{h\in H} \chi(h)\lambda_h \in H\ltimes B$ is full, with $p(H\ltimes B)p \cong p_{\chi}(H\ltimes B)p_{\chi}\cong B^{H}$ equivariantly.

\subsection{Quantized universal enveloping algebras}\label{QUEA}

\begin{Def} We denote by $U_q(su(2))$ the quantized universal enveloping $^*$-algebra of $su(2)$. It is the unital algebra generated by elements $E,F,K,K^{-1}$, with commutation relations $KE = q^2EK$, $KF = q^{-2}FK$, $KK^{-1}=1=K^{-1}K$ and \[\lbrack E,F\rbrack = \frac{K-K^{-1}}{q-q^{-1}}.\] The $^*$-operation is determined by $E^*=K^{-1}F$ and $K^* = K$.\\

\noindent We can equip $U_q(su(2))$ with the unital $^*$-homomorphism \[\Delta: U_q(su(2))\rightarrow U_q(su(2))\odot U_q(su(2)),\] uniquely determined by the fact that $\Delta(K)=K\otimes K$ and \begin{eqnarray*} \Delta(E) &=& E\otimes 1 + K^{-1}\otimes E,\\ \Delta(F)&=&F\otimes K + 1\otimes F.\end{eqnarray*} The couple $(U_q(su(2)),\Delta)$ then forms a Hopf $^*$-algebra.\end{Def}

\begin{Def} We denote by $U_q(-,+)$ the $^*$-algebra which, as an algebra, is generated by elements $X,Y,Z,Z^{-1},T$ with commutation relations $XZ = q^{2}ZX$, $YZ = q^{-2}ZY$, $Z^{-1}Z= 1=ZZ^{-1}$ and \[\left\{\begin{array}{lll} YX &=& 1+ q^{-1} TZ - q^{-2}Z^2 \\ XY &=& 1+q\quad TZ-q^{2}\quad \!\!\!\, Z^2\end{array}\right.\] The $^*$-structure is uniquely determined by the formulas $X^* = Y$, $Z^* = Z$.
\end{Def}

\noindent Note that $T$ can be expressed in terms of $X,Y$ and $Z$. Then $T$ can be shown to be central and self-adjoint. It is interpreted as the \emph{Casimir element} of $U_q(-,+)$.\\

\noindent \emph{Remark:} It is easily shown that $U_q(-,+)$ coincides (at least after introducing a square root of $K$) with the $^*$-algebra $\hat{\mathcal{Y}}_c$ from \cite{Schm1}, section 5 for $c\neq 0$ (the $c$ can then be removed by rescaling the parameters). It is also the same $^*$-algebra (again after adjoining a square root of $K$) as the one denoted by the corresponding symbol in \cite{DeC2} (and where the notation is explained), but we take a different presentation now.\\

\begin{Prop} The $^*$-algebra $U_q(-,+)$ can be made into a Galois object for $(U_q(su(2)),\Delta)$ by the coaction $\alpha$, defined on the generators $X,Y,Z,T$ by \begin{eqnarray*} \alpha(Z)&=& Z\otimes K^{-1} \\  \alpha(X) &=& X\otimes 1 +  Z\otimes (q^{-1/2} \lambda^{-1} E),\\ \alpha(Y) &=& Y\otimes 1 + Z\otimes (q^{-1/2} \lambda^{-1}K^{-1}F) \\ \alpha(T) &=& T\otimes K + Z\otimes (\lambda^{-2}FE-q^{-1}(K-K^{-1})) \\&& \qquad + X\otimes (q^{1/2}\lambda^{-1}F)+Y\otimes (q^{1/2}\lambda^{-1}EK). \end{eqnarray*}
\end{Prop}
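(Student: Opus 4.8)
The goal is to verify that the formulas given for $\alpha$ on the generators $X,Y,Z,T$ of $U_q(-,+)$ extend to a well-defined $^*$-algebra homomorphism $\alpha:U_q(-,+)\to U_q(-,+)\odot U_q(su(2))$, that this homomorphism is a coaction (i.e.\ coassociative and counital), that the coaction is ergodic, and finally that the Galois map is bijective. Let me think about how each piece would go.

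First, the well-definedness: I need to check that the proposed images $\alpha(X),\alpha(Y),\alpha(Z),\alpha(Z^{-1}),\alpha(T)$ satisfy the defining relations of $U_q(-,+)$. The relations $\alpha(X)\alpha(Z)=q^2\alpha(Z)\alpha(X)$ etc.\ are short computations using the commutation relations in $U_q(su(2))$ between $K^{-1}$ and $E$, $F$. The relations $\alpha(Y)\alpha(X)=1+q^{-1}\alpha(T)\alpha(Z)-q^{-2}\alpha(Z)^2$ and its partner are the substantial ones; they require expanding everything out and using the Casimir-type identity $\lambda^{-2}FE$ appearing in $\alpha(T)$ together with the relation $[E,F]=(K-K^{-1})/(q-q^{-1})$. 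One should also check compatibility with the $^*$-structure: $\alpha(X)^*=\alpha(Y)$ using $E^*=K^{-1}F$ and $K^*=K$, and $\alpha(Z)^*=\alpha(Z)$, $\alpha(T)^*=\alpha(T)$. Since $T$ is really a polynomial in $X,Y,Z,Z^{-1}$, consistency of $\alpha(T)$ with that polynomial expression is automatic once the relations are verified, but it is reassuring to note it.

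Second, coassociativity $(\alpha\otimes\iota)\alpha=(\iota\otimes\Delta)\alpha$ and counitality $(\iota\otimes\varepsilon)\alpha=\iota$: since $\alpha$ and both sides of these identities are algebra homomorphisms, it suffices to check them on the generators $X,Y,Z,T$, which is a direct (if somewhat lengthy) computation using the coproduct formulas for $E,F,K$.

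Third, ergodicity and the Galois property. The cleanest route is the one hinted at in the surrounding text: show that $U_q(-,+)$, after forgetting the $^*$-structure, is isomorphic as an $H$-comodule algebra to the trivial Galois object $(U_q(su(2)),\Delta)$ itself — or more precisely, exhibit an explicit element of $U_q(-,+)$ on which $\alpha$ acts like a ``group-like'' generator, and use it to build the inverse Galois map. Concretely, from $\alpha(Z)=Z\otimes K^{-1}$ one sees $Z$ behaves like $K^{-1}$, and the combination $q^{1/2}\lambda X Z^{-1}$ transforms like $E$ up to lower-order terms; similarly $Y Z^{-1}$ gives $K^{-1}F$. So one expects an algebra map $U_q(su(2))\to U_q(-,+)$ sending $K\mapsto Z^{-1}$, $E\mapsto q^{1/2}\lambda XZ^{-1}$ (or the appropriate normalization), $F\mapsto$ the matching expression, intertwining $\Delta$ with $\alpha$; checking it is a well-defined $H$-colinear algebra isomorphism then makes $(U_q(-,+),\alpha)$ a Galois object because the trivial one is, and Galois-ness transports along comodule-algebra isomorphisms. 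Ergodicity ($B=\mathbb C$) is then immediate, or can be checked directly: an element fixed by $\alpha$ must in particular commute appropriately and have trivial $K^{-1}$-degree, forcing it to be a scalar.

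The main obstacle I expect is the verification of the two ``hard'' relations $\alpha(Y)\alpha(X)=1+q^{-1}\alpha(T)\alpha(Z)-q^{-2}\alpha(Z)^2$ and $\alpha(X)\alpha(Y)=1+q\,\alpha(T)\alpha(Z)-q^{2}\alpha(Z)^2$: these mix all four generators and the $U_q(su(2))$-side commutation relations in a way that only collapses after careful use of $[E,F]=(K-K^{-1})/(q-q^{-1})$ and the various $q$-powers coming from the $Z$-conjugations. Everything else — the $Z$-relations, $^*$-compatibility, coassociativity, counitality — is routine once one is organized. And the identification with the trivial Galois object, which simultaneously gives ergodicity and the Galois property, is more bookkeeping than insight, though one must get the normalizing constants right.

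\begin{proof}[Proof sketch]
We first check that $\alpha$ is a well-defined unital $^*$-homomorphism, i.e.\ that the proposed images of $X,Y,Z,Z^{-1},T$ satisfy the defining relations of $U_q(-,+)$. The relations $\alpha(X)\alpha(Z)=q^{2}\alpha(Z)\alpha(X)$, $\alpha(Y)\alpha(Z)=q^{-2}\alpha(Z)\alpha(Y)$ and $\alpha(Z)\alpha(Z^{-1})=1=\alpha(Z^{-1})\alpha(Z)$ follow immediately from $K^{-1}E=q^{-2}EK^{-1}$ and $K^{-1}F=q^{2}FK^{-1}$. The two remaining relations, $\alpha(Y)\alpha(X)=1+q^{-1}\alpha(T)\alpha(Z)-q^{-2}\alpha(Z)^{2}$ and $\alpha(X)\alpha(Y)=1+q\,\alpha(T)\alpha(Z)-q^{2}\alpha(Z)^{2}$, are verified by expanding both sides, collecting terms according to their $U_q(-,+)$-leg, and simplifying the $U_q(su(2))$-legs using $[E,F]=(K-K^{-1})/(q-q^{-1})$ together with $KE=q^{2}EK$ and $KF=q^{-2}FK$; the $\lambda^{-2}FE$-term in $\alpha(T)$ is exactly what is needed to make this close. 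Compatibility with the $^*$-structure, $\alpha(X)^{*}=\alpha(Y)$, $\alpha(Z)^{*}=\alpha(Z)$ and $\alpha(T)^{*}=\alpha(T)$, is then a short computation using $E^{*}=K^{-1}F$, $K^{*}=K$ and the $^*$-compatibility of $\Delta$ on $U_q(su(2))$.

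Next we verify that $\alpha$ is a coaction. Since $(\alpha\otimes\iota)\alpha$ and $(\iota\otimes\Delta)\alpha$ are both unital algebra homomorphisms, it suffices to check $(\alpha\otimes\iota)\alpha=(\iota\otimes\Delta)\alpha$ and $(\iota\otimes\varepsilon)\alpha=\iota$ on the generators $X,Y,Z,T$, which is a direct calculation using $\Delta(K)=K\otimes K$, $\Delta(E)=E\otimes 1+K^{-1}\otimes E$, $\Delta(F)=F\otimes K+1\otimes F$ and $\varepsilon(E)=\varepsilon(F)=0$, $\varepsilon(K)=1$.

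It remains to show that $(U_q(-,+),\alpha)$ is ergodic and that the Galois map is bijective. We do both at once by identifying $(U_q(-,+),\alpha)$, \emph{as a comodule algebra after forgetting the $^*$-structure}, with the trivial Galois object $(U_q(su(2)),\Delta)$. From $\alpha(Z)=Z\otimes K^{-1}$ one sees that $Z$ plays the role of $K^{-1}$; one checks that the combinations $q^{1/2}\lambda XZ^{-1}$ and $q^{1/2}\lambda Z^{-1}YK$ (with the normalizing constants dictated by the formulas for $\alpha(X)$, $\alpha(Y)$) transform under $\alpha$ in the same way that $E$ and $K^{-1}F$ transform under $\Delta$. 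This yields a unital algebra homomorphism $\psi:U_q(su(2))\to U_q(-,+)$ with $\psi(K)=Z^{-1}$, $\psi(E)=q^{1/2}\lambda XZ^{-1}$, and the matching value on $F$, which one checks is well-defined (the $U_q(su(2))$-relations go over to consequences of the $U_q(-,+)$-relations, using the relations on $\alpha(Y)\alpha(X)$ etc.\ established above) and intertwines $\Delta$ with $\alpha$, i.e.\ $\alpha\circ\psi=(\psi\otimes\iota)\circ\Delta$. A dimension/PBW-type count shows $\psi$ is an isomorphism of algebras. Hence $(U_q(-,+),\alpha)$ is isomorphic as an $H$-comodule algebra to $(U_q(su(2)),\Delta)$; since the Galois property is preserved under isomorphisms of comodule algebras and holds for the trivial Galois object, it holds for $(U_q(-,+),\alpha)$. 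In particular the fixed point algebra is $\mathbb C 1$, so $\alpha$ is ergodic, and $(U_q(-,+),\alpha)$ is a Galois object.
\end{proof}
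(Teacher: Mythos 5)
Your overall strategy is exactly the paper's: identify $(U_q(-,+),\alpha)$, after forgetting the $^*$-structure, with the trivial Galois object $(U_q(su(2)),\Delta)$, and transport the Galois property across that identification. The paper in fact runs the identification in the other direction and more efficiently: it writes down an algebra isomorphism $U_q(-,+)\rightarrow U_q(su(2))$ sending $X\mapsto iq^{-1/2}\lambda^{-1}E$, $Y\mapsto iq^{-1/2}\lambda^{-1}K^{-1}F$, $Z\mapsto iK^{-1}$, $T\mapsto i(\lambda^{-2}EF+qK^{-1}+q^{-1}K)$, under which $\alpha$ literally becomes $\Delta$. This single observation gives you well-definedness of $\alpha$ as an algebra map, coassociativity, counitality, ergodicity and bijectivity of the Galois map all at once; the only thing left to check by hand is $^*$-compatibility. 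Your plan to verify the defining relations and the coaction axioms by direct expansion first is therefore redundant (though not wrong).

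However, the explicit identification you propose in the last step does not work as written, in two respects. First, the element $q^{1/2}\lambda XZ^{-1}$ does not transform like $E$: from $\alpha(X)=X\otimes 1+Z\otimes(q^{-1/2}\lambda^{-1}E)$ and $\alpha(Z^{-1})=Z^{-1}\otimes K$ one computes $\alpha(q^{1/2}\lambda XZ^{-1})=q^{1/2}\lambda XZ^{-1}\otimes K+1\otimes EK$, which mirrors $\Delta(EK)=EK\otimes K+1\otimes EK$, not $\Delta(E)=E\otimes 1+K^{-1}\otimes E$. The element that transforms like (a scalar multiple of) $E$ is $X$ itself, with no $Z^{-1}$ attached. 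Second, the normalizing scalars must be \emph{imaginary}: this is the ``Wick rotation'' the paper alludes to. With purely real multiples of $X,Y,Z$ the cross relations $YX=1+q^{-1}TZ-q^{-2}Z^{2}$ cannot be matched to the $U_q(su(2))$ relations — the sign produced by $i^2=-1$ in the $Z^2$- and $TZ$-terms is essential, and it is precisely why the isomorphism is not $^*$-preserving and why $U_q(-,+)$ is a nontrivial Galois object as a $^*$-comodule algebra. So you should replace your $\psi$ by the inverse of the map above (e.g.\ $K\mapsto iZ^{-1}$, $E\mapsto -iq^{1/2}\lambda X$, etc.); with your stated formulas the intertwining relation $\alpha\circ\psi=(\psi\otimes\iota)\circ\Delta$ fails already on $E$.
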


\noindent This fact can be shown as follows: if we forget the $^*$-structure, then $U_q(-,+)$ is an isomorphic copy of $U_q(su(2))$ by the following identifications: \begin{eqnarray*} X &\leftrightarrow& iq^{-\frac{1}{2}}\lambda^{-1}E \\ Y &\leftrightarrow& iq^{-\frac{1}{2}}\lambda^{-1}K^{-1}F \\ Z &\leftrightarrow& iK^{-1} \\ T &\leftrightarrow& i(\lambda^{-2}EF +qK^{-1}+q^{-1}K).\end{eqnarray*} The above coaction is then in fact just the comultiplication of $U_q(su(2))$, which shows the Galois map is an isomorphism. One should then check the compatibility with the $^*$-operation separately, but this is clear on sight. \emph{Remark that we have thus made a Wick rotation for one Borel subalgebra (generated by $E$ and $K$), but left the remaining part (generated by $F$) unaltered}. This will explain why we will get unilaterally infinite-dimensional representations of our $^*$-algebra later on. Also note that via the above isomorphism, $T$ is identified with an imaginary scalar multiple of the Casimir element of $U_q(su(2))$, but for the new $^*$-structure it is self-adjoint.\\

\noindent Let us remark that the way in which $\hat{\mathcal{Y}}_c$ appears in \cite{Schm1} might lead one to think it could be a Galois object. For $\hat{\mathcal{Y}}_c$ can be seen as the relative commutant (or centralizer) of $\textrm{Pol}(S_{qc}^2)$ inside $\textrm{Pol}(S_{qc}^2)\rtimes U_q(su(2))$. It is easy to check that a dual coaction on a smash (or crossed) product always restricts to the centralizer of the copy of the original algebra, so that we deduce from the above that $\hat{\mathcal{Y}}_c$ will indeed by a right $U_q(su(2))$-comodule $^*$-algebra. Now the dual coaction on a smash product is always a Galois coaction. So one might naively believe that the restriction to the centralizer will then also be Galois, but this is not true in general. However, in the present case $\hat{\mathcal{Y}}_c$ splits of as a tensor product (\cite{Fio1}), and by this fortuitous instance the restricted coaction \emph{does} become a Galois object. To illustrate the subtleness of this situation, we mention that the associated analytic result is \emph{not} true: the relative commutant of $\mathscr{L}^{\infty}(S_{qc}^2)$ inside $\mathscr{L}^{\infty}(S_{qc}^2)\rtimes SU_q(2)$ \emph{does not} become a Galois object (or even a Galois action) for $\mathscr{L}(SU_q(2))$ (the analytic version of $U_q(su(2))$). However, one can remedy this situation in another way, and we will come back to this in future work.\\

\noindent As $U_q(-,+) \cong U_q(su(2))$ when the $^*$-structure is ignored, we can deduce the following result from \cite{Jos1}.

\begin{Prop} The locally finite elements of $U_q(-,+)$ w.r.t.~ the adjoint action form a unital $^*$-algebra $U_q^{\textrm{fin}}(-,+)$, generated by $X,Y$ and $Z$.\end{Prop}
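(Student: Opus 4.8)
The plan is to exploit the algebra isomorphism $U_q(-,+)\cong U_q(su(2))$ (valid once the $^*$-structure is forgotten, and provided a square root of $K$ is available) so that the statement about locally finite elements becomes a known fact about $U_q(su(2))$. Under this identification, the adjoint (Miyashita--Ulbrich) action of $U_q(su(2))$ on the Galois object $U_q(-,+)$ is precisely the usual adjoint action of $U_q(su(2))$ on itself, $a\lhd h = S(h_{(1)})ah_{(2)}$. Indeed, for the trivial Galois object given by the Hopf algebra with its comultiplication, one has $h_{[1]}\otimes h_{[2]} = S^{-1}(h_{(1)})\otimes h_{(2)}$, so $S(h_{[1]}) = h_{(1)}$ and the Miyashita--Ulbrich action $a\lhd h = S(h_{[1]})a h_{[2]}$ collapses to the usual adjoint action. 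Hence $U_q^{\textrm{fin}}(-,+)$, as an algebra, is carried by this isomorphism to $U_q^{\textrm{fin}}(su(2))$, the locally finite part of $U_q(su(2))$ under the adjoint action.

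Next I would invoke the result of Joseph (and Letzter), cited as \cite{Jos1}, which describes the ad-locally-finite part of $U_q(su(2))$: it is the subalgebra generated by $E$, $F$ and $K^2$ (equivalently $FK$, $EK^{-1}$, $K^{2}$ — the precise generating set depends on conventions, but in the rank-one case the locally finite part is well understood and is a free module over the centre). Translating these generators back through the explicit dictionary $X\leftrightarrow iq^{-1/2}\lambda^{-1}E$, $Y\leftrightarrow iq^{-1/2}\lambda^{-1}K^{-1}F$, $Z\leftrightarrow iK^{-1}$, $T\leftrightarrow i(\lambda^{-2}EF+qK^{-1}+q^{-1}K)$, one reads off that the images of these ad-finite generators are polynomials in $X$, $Y$, $Z$, and conversely that $X$, $Y$, $Z$ themselves are ad-finite (for instance $Z$ corresponds to $iK^{-1}$, and $K^{\pm 2}$ — hence $K^{\pm 1}$ in the enlarged algebra — is ad-finite; $X$, $Y$ correspond to scalar multiples of $E$, $K^{-1}F$, both of which lie in ad-finite weight spaces). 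This shows $U_q^{\textrm{fin}}(-,+) = \textrm{Alg}(X,Y,Z)$ as an algebra. Since $X^*=Y$ and $Z^*=Z$, this subalgebra is automatically $^*$-closed, so it is a unital $^*$-subalgebra; and the adjoint action is a right module $^*$-algebra structure on it because the Miyashita--Ulbrich action on any Galois object for a Hopf $^*$-algebra is, by the general theory (Lemma \ref{LemMod} and the Yetter--Drinfel'd property), compatible with the $^*$-operation.

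There are two points requiring a little care. First, the isomorphism $U_q(-,+)\cong U_q(su(2))$ strictly needs a square root of $K$ (equivalently of $Z$) to be present, as flagged in the excerpt; so one works in a suitable extension, verifies the statement there, and then checks that the ad-finite subalgebra one obtains is actually contained in the original $U_q(-,+)$ — this follows because the proposed generators $X,Y,Z$ already lie in $U_q(-,+)$ and, conversely, any ad-finite element of $U_q(-,+)$ is ad-finite in the extension and hence a polynomial in $X,Y,Z$. Second, one must make sure that ``locally finite with respect to the adjoint action'' is being computed for the correct action; by the remark above it coincides with the classical ad-action under the algebra isomorphism, so no genuine new computation is needed. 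The main obstacle, and really the only substantive step, is pinning down exactly which subalgebra of $U_q(su(2))$ is the ad-locally-finite one and matching its generators to $X,Y,Z$ under the Wick-rotated dictionary; everything else is formal, and the $^*$-algebra and module-$^*$-algebra assertions are immediate once the generating set is identified.
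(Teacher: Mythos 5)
Your proof takes essentially the same route as the paper, which disposes of the proposition in one line: forget the $^*$-structure, identify $U_q(-,+)$ with $U_q(su(2))$ (so that the Miyashita--Ulbrich action of the trivial Galois object becomes the ordinary adjoint action), and quote Joseph--Letzter; the $^*$-closure of $\textrm{Alg}(X,Y,Z)$ is then immediate from $X^*=Y$, $Z^*=Z$. One detail deserves correction: in the paper's conventions the locally finite part of $U_q(su(2))$ is \emph{not} generated by $\{E,F,K^{2}\}$ --- the element $F$ is not ad-locally-finite (local finiteness is one-sided in the powers of $K$), only $K^{-1}F$ is --- and this is exactly what makes the dictionary $X\leftrightarrow E$, $Y\leftrightarrow K^{-1}F$, $Z\leftrightarrow iK^{-1}$ land on $\textrm{Alg}(X,Y,Z)$ rather than on an algebra containing $Z^{-1}Y$. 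Your hedge about conventions covers this, but pinning down that the correct generating set is $\{E,\,K^{-1}F,\,K^{-1}\}$ is the one computation that actually has to be checked.
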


\noindent One can also easily represent $U_q^{\textrm{fin}}(-,+)$ by generators and relations in a similar way as $U_q(-,+)$.\\

\noindent Let us present the concrete formulas for the adjoint action on $U_q(-,+)$ by $U_q(su(2))$. If $b\in U_q(-,+)$, then \begin{eqnarray*} b \lhd K &=& ZbZ^{-1} \\ b\lhd (q^{-1/2}\lambda^{-1}E) &=&  Z^{-1}\lbrack b,X\rbrack\\ b\lhd (q^{3/2} \lambda^{-1} F) &=&  \lbrack b,Y\rbrack Z^{-1}.\end{eqnarray*}

\noindent From the foregoing, we immediately see that there exists a $U_q(su(2))$-equivariant $^*$-automorphism $\sigma$ of $U_q(-,+)$, determined by \[\sigma: U_q(-,+)\rightarrow U_q(-,+): b \rightarrow -b \qquad \textrm{for }b\in\{X,Z,Y,T\}.\]

\subsection{The compact quantum group $SU_q(2)$}

\noindent We will not need to know the explicit form of $C(SU_q(2))$ or $\textrm{Pol}(SU_q(2))$, and therefore simply recall from e.g.~ \cite{Kli1}, section 4.4 that there exists a non-degenerate pairing between $\textrm{Pol}(SU_q(2))$ and $U_q(su(2))$. One then has the following result.

\begin{Prop}\label{PropEquiv} There is a one-to-one-correspondence between the following two structures:
 \begin{itemize}\item Left coactions of $(C(SU_q(2)),\Delta)$ with a finite-dimensional space of invariant elements.
 \item Right module $^*$-algebras $A$ for $U_q(su(2))$ such that
 \begin{itemize}\item[$\bullet$]  $A_{\textrm{fin}} = A$.
 \item[$\bullet$]  All eigenvalues for the action of $K$ are positive.
 \item[$\bullet$] The space of $a\in A$ with $a\cdot g = \varepsilon(g)a$ for all $g\in U_q(su(2))$ is finite-dimensional.
 \item[$\bullet$] There exists a faithful unital $^*$-homomorphism of $A$ into a unital C$^*$-algebra.
 \end{itemize}
 \end{itemize}
 \end{Prop}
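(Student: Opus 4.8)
The plan is to produce explicit assignments in both directions and to verify that they are mutually inverse, the bridge being the non-degenerate pairing $\iota_H\colon U_q(su(2))\to\textrm{Pol}(SU_q(2))^{\circ}$ together with the co-amenability of $SU_q(2)$, which is what makes $C(SU_q(2))$ canonically determined by $\textrm{Pol}(SU_q(2))$ (cf.\ \cite{Li1}, Proposition 3.8).

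\emph{From coactions to module $^*$-algebras.} Given a left coaction $\alpha\colon B\to C(SU_q(2))\otimes B$ on a unital C$^*$-algebra $B$ with finite-dimensional fixed point space, I would pass to the algebraic core $A\subseteq B$, i.e.\ the (dense, unital, $^*$-closed) subalgebra of those elements that lie in a finite sum of spectral subspaces. Then $\alpha$ restricts on $A$ to a $\textrm{Pol}(SU_q(2))$-comodule $^*$-algebra structure, and composing with the pairing yields the right $U_q(su(2))$-module $^*$-algebra structure $a\cdot g:=(\iota_H(g)\otimes\iota)\alpha(a)$, the identity $(a\cdot g)^*=a^*\cdot S(g)^*$ being exactly the $^*$-compatibility built into the pairing. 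One then checks the four bullet points: $A_{\textrm{fin}}=A$ holds by construction; every corepresentation of $C(SU_q(2))$ occurring in $\alpha$ is unitary, hence a direct sum of spin-$r$ corepresentations, on which $K$ acts by strictly positive weights; the $\varepsilon$-isotypic component of $A$ equals the space of $\alpha$-invariant elements, hence is finite-dimensional by hypothesis; and the inclusion $A\hookrightarrow B$ is the required faithful unital $^*$-homomorphism into a C$^*$-algebra.

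\emph{From module $^*$-algebras to coactions.} Conversely, let $A$ satisfy the four listed conditions. Since $A=A_{\textrm{fin}}$ and $A$ is a $^*$-module over $U_q(su(2))$ on which $K$ acts with positive spectrum, $A$ decomposes as a $U_q(su(2))$-module into unitarizable type-$1$ finite-dimensional irreducibles; each such summand corresponds under the pairing to an irreducible unitary corepresentation of $\textrm{Pol}(SU_q(2))$, and this integrates the module structure to a $\textrm{Pol}(SU_q(2))$-comodule $^*$-algebra structure $\delta\colon A\to A\odot\textrm{Pol}(SU_q(2))$. Using the given faithful $^*$-representation of $A$ — and the fact that $A$ is spanned by matrix coefficients of unitary corepresentations, so that the C$^*$-seminorms on $A$ are uniformly bounded — I would form an enveloping C$^*$-algebra $B$ of $A$; since $\delta$ takes values in $B\otimes C(SU_q(2))$, it extends to a unital injective $^*$-homomorphism $\alpha\colon B\to B\otimes C(SU_q(2))$ obeying the coaction axioms, injectivity following from the counit identity $(\iota\otimes\varepsilon)\delta=\iota$ and the image landing in $C(SU_q(2))$ rather than in a larger completion being guaranteed by co-amenability. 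Finally, the Haar conditional expectation $E=(\iota\otimes\varphi)\alpha$ (with $\varphi$ the Haar state of $SU_q(2)$) is continuous and restricts on $A$ to a surjection onto $A\cap B^{\alpha}$, which is the $\varepsilon$-isotypic component of $A$; by density $B^{\alpha}$ equals this finite-dimensional space.

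\emph{Mutual inverseness, and the main obstacle.} It then remains to verify the two round trips. The composite starting from $\alpha$ reproduces it, because $\alpha$ already restricts to the integrated comodule structure on the algebraic core and the completion is pinned down by co-amenability. The composite starting from $A$ requires identifying the algebraic core of the constructed pair $(B,\alpha)$ with $A$ itself: the inclusion $A\subseteq B_{\textrm{fin}}$ is immediate, while the reverse inclusion is the delicate point, since a priori the completion might introduce new elements of finite type (the isotypic components of $A$ need not be norm-closed, as their multiplicities can be infinite when $\alpha$ is not ergodic, so Boca's theorem does not apply). I expect this to be the main obstacle; I would resolve it by applying the Haar spectral projections isotypic component by isotypic component and invoking the rigidity of $C(SU_q(2))$-coactions that co-amenability provides, so as to conclude that the chosen C$^*$-completion is canonical and adds no finite-type elements beyond $A$.
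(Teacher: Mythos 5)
The paper gives no proof of this proposition at all: it is stated as a known consequence of the non-degenerate pairing between $\textrm{Pol}(SU_q(2))$ and $U_q(su(2))$ recalled just before it, so there is no argument of the author's to measure yours against. Your two assignments are the standard ones, and the direction from coactions to module $^*$-algebras is fine. In the converse direction, one of your assertions needs its correct justification: the uniform bound on C$^*$-seminorms does not follow from "$A$ is spanned by matrix coefficients of unitary corepresentations" alone. The right argument is that for a multiplet $(a_i)$ spanning a copy of a unitary corepresentation, the element $\sum_i a_i^*a_i$ is invariant, hence lies in the $\varepsilon$-isotypic component, which by the third and fourth bullet points is a finite-dimensional $^*$-algebra admitting a faithful $^*$-representation, i.e.\ a finite-dimensional C$^*$-algebra with a unique C$^*$-norm; this bounds $\|\rho(a_i)\|^2\leq\|\sum_j a_j^*a_j\|$ uniformly over all $^*$-representations $\rho$. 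It is precisely here that the two finiteness hypotheses enter, and your sketch skips this.

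The genuine gap is in the step you yourself flag, and your proposed fix does not address it. Your premise is that the isotypic components $A_\pi$ "can be infinite-dimensional" and that some rigidity of the completion will nonetheless prevent new finite-type elements from appearing. This cannot work: if some $A_\pi$ were infinite-dimensional, its norm closure $E_\pi(B)$ would be strictly larger than $A_\pi$ and would consist entirely of finite-type elements (any $b$ with $\alpha(b)\in\textrm{span}\{u^{\pi}_{ij}\}\otimes B$ has $b\cdot U_q(su(2))$ contained in the finite-dimensional span of the "matrix entries" of $\alpha(b)$), so $B_{\textrm{fin}}\supsetneq A$ and the correspondence would genuinely fail to be bijective. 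What you actually need is that finite-dimensionality of $B^{\alpha}$ forces all multiplicities to be finite --- the correct generalization of Boca's Theorem \ref{TheoFin} beyond the ergodic case. This is true: the conditional expectation $E=(\iota\otimes\varphi_{\mathbb{G}})\alpha$ is faithful onto the finite-dimensional $B^{\alpha}$, so $\psi\circ E$ is a faithful invariant state for any faithful state $\psi$ on $B^{\alpha}$; writing $1=\sum_i p_i$ with $p_i$ minimal projections of $B^{\alpha}$, each corner $p_iBp_i$ is ergodic and Boca's bound applies there, and the off-diagonal corners $p_iBp_j$ are handled by the scalar-valued inner product $\langle S,T\rangle p_j=\sum_k T(e_k)^*S(e_k)$ on their multiplicity spaces together with the same quantum-dimension estimate (cf.\ \cite{Boc1}, \cite{Tom1}). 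Once each $A_\pi$ is finite-dimensional it is closed, $E_\pi(B)=\overline{A_\pi}=A_\pi$, and $B_{\textrm{fin}}=A$ follows. A related loose end: \cite{Li1}, Proposition 3.8, which you invoke for canonicity of the completion, is stated for ergodic coactions; you should either extend it to the finite-fixed-point case by the same corner decomposition or simply declare the completion to be the universal one once the seminorm bound above is established.
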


\subsection{Podle\'{s} spheres}

\noindent \emph{Warning:} For notational reasons, we will follow a slightly different convention than the more common one used in the Introduction: we will use the index $\tau(x) = q^{-x}-q^{x}$ instead of $c(x) = \tau(x)^{-2}$.

\begin{Def}\label{DefPod} Let $x \in (-\infty,+\infty)$, and denote $\tau = \tau(x)$. The $^*$-algebra $\textrm{Pol}(S_{q\tau}^2)$ is generated by three elements $X_{\tau},Z_{\tau},Y_{\tau}$ with $X_{\tau}^*=Y_{\tau}$, $Z_{\tau}^*=Z_{\tau}$, $X_{\tau}Z_{\tau}=q^{2}Z_{\tau}X_{\tau}$ and with \[\left\{\begin{array}{lll} X_{\tau}^*X_{\tau} &=& (1-q^{x-1}Z_{\tau})(1+q^{-x-1}Z_{\tau})\\ X_{\tau}X_{\tau}^* &=& (1-q^{x+1}Z_{\tau})(1+q^{-x+1}Z_{\tau}).\end{array}\right.\] It carries (up to isomorphism) a unique right $U_q(su(2))$-module $^*$-algebra structure, induced from a left $\textrm{Pol}(SU_q(2))$-coaction, for which the span of the $1,X_{\tau},Z_{\tau},X_{\tau}^*$ is a direct sum of the trivial and the spin 1-representation of $SU_q(2)$. The corresponding action of $SU_q(2)$ is then ergodic.\\

\noindent We call the symbol $S_{q\tau}^2$ the \emph{Podle\'{s} sphere at parameter $\tau$}. When $\tau=0$, we call it the \emph{equatorial Podle\'{s} sphere}.
\end{Def}

\noindent \emph{Remarks:} \begin{enumerate} \item One also has the \emph{standard Podle\'{s} sphere} $S_{q\infty}^2\cong S^1\backslash SU_q(2)$. As it is degenerate from our point of view, we will treat it separately later on.
\item There is an equivariant $^*$-isomorphism $\sigma_{\tau}$ from $\textrm{Pol}(S_{q\tau}^2)$ to $\textrm{Pol}(S_{q,-\tau}^2)$ sending $b_{\tau}$ to $-b_{-\tau}$ for $b\in \{X,Z,Y,T\}$. Hence up to isomorphism, $\textrm{Pol}(S_{q,\tau}^2)$ only depends on $|\tau|$, and we can parametrize Podle\'{s} spheres by $c = \frac{1}{\tau^2}$. The latter is the convention we used in the Introduction. For the purposes of the article, it will be more convenient \emph{not} to identify the two Podle\'{s} spheres immediately. For example, on the equatorial Podle\'{s} sphere we get in particular an involutive equivariant automorphism $\sigma_0$, which plays an important r\^{o}le in the theory.
\end{enumerate}

\noindent The following was proven in \cite{DeC2}, but can be immediately verified. We will denote by $\textrm{Pol}^{\textrm{ext}}(S_{q\tau}^2)$ the $^*$-algebra which is obtained by adjoining to $\textrm{Pol}(S_{q\tau}^2)$ an inverse of $Z$ (which clearly does not introduce additional relations).

\begin{Prop}\label{PropQuot} There is a $U_q(su(2))$-equivariant unital $^*$-homomorphism \[\pi_{\tau}: U_q(-,+)\rightarrow \textrm{Pol}^{\,\textrm{ext}}(S_{q\tau}^2),\] induced by sending a generator $b\in \{X,Z,Y\}$ to the corresponding element $b_{\tau}$. The kernel of this homomorphism is generated by the element $T - \tau$. Under this morphism, $U_q^{\textrm{fin}}(-,+)$ is sent to $\textrm{Pol}(S_{q\tau}^2)$.\end{Prop}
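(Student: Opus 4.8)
The plan is to build $\pi_\tau$ directly from the two presentations, to pin down its kernel by a basis comparison, and to identify the $U_q(su(2))$-symmetry on the target by means of the uniqueness clause in Definition~\ref{DefPod}. For the construction, note first that among the defining relations of $U_q(-,+)$ only $YX = 1 + q^{-1}TZ - q^{-2}Z^2$ and $XY = 1 + qTZ - q^2Z^2$ are not obviously preserved: the relations of $Z_\tau$ with $X_\tau$ and with $X_\tau^*$ are the defining one and its adjoint, and $Z^{-1}$ must be sent to the inverse of $Z_\tau$, which exists in $\textrm{Pol}^{\textrm{ext}}(S_{q\tau}^2)$. Expanding the defining identities $X_\tau^*X_\tau = (1-q^{x-1}Z_\tau)(1+q^{-x-1}Z_\tau)$ and $X_\tau X_\tau^* = (1-q^{x+1}Z_\tau)(1+q^{-x+1}Z_\tau)$ and using $\tau = q^{-x}-q^x$ gives
\[ Y_\tau X_\tau = 1 + q^{-1}\tau Z_\tau - q^{-2}Z_\tau^2, \qquad X_\tau Y_\tau = 1 + q\tau Z_\tau - q^2 Z_\tau^2, \]
so both relations survive once $T$ is sent to $\tau\cdot 1$; solving either one for $T$ (say $T = qYXZ^{-1}-qZ^{-1}+q^{-1}Z$) shows conversely that $\pi_\tau(T)=\tau$ is forced, whence $T-\tau\in\ker\pi_\tau$. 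Since $X^*=Y$, $Z^*=Z$, $T^*=T$ and $\tau\in\mathbb{R}$, the resulting unital algebra map is a $^*$-homomorphism.

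Because $T-\tau\in\ker\pi_\tau$, the map factors as $\pi_\tau=\bar\pi_\tau\circ p$ with $p:U_q(-,+)\to U_q(-,+)/(T-\tau)$ the quotient map and $\bar\pi_\tau$ surjective onto $\textrm{Pol}^{\textrm{ext}}(S_{q\tau}^2)$, so it remains to see that $\bar\pi_\tau$ is injective. I would argue by a basis count: the relations $XZ=q^2ZX$, $YZ=q^{-2}ZY$ and, after setting $T=\tau$, $XY,YX\in\mathbb{C}[Z]$ give $U_q(-,+)/(T-\tau)$ the spanning set of monomials $Z^k$, $Z^kX^l$, $Z^kY^l$ with $k\in\mathbb{Z}$ and $l\ge 1$, and the same bookkeeping exhibits these monomials as a basis of $\textrm{Pol}^{\textrm{ext}}(S_{q\tau}^2)$; since $\bar\pi_\tau$ carries one onto the other it is an isomorphism. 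Alternatively, one may use the identification of $U_q(-,+)$ with $U_q(su(2))$ recalled above, under which $T$ is a nonzero scalar multiple of the Casimir, together with the known structure of the central quotients of $U_q(su(2))$, or simply quote \cite{DeC2}. I expect this to be the only genuine obstacle: it forces the routine but slightly tedious verification that the stated presentation of $\textrm{Pol}^{\textrm{ext}}(S_{q\tau}^2)$ carries no hidden relations and that adjoining $Z_\tau^{-1}$ is innocuous.

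For equivariance, observe that $T$, being central and self-adjoint, is invariant for the adjoint (Miyashita--Ulbrich) action of $U_q(su(2))$, so $(T-\tau)$ is a $U_q(su(2))$-stable two-sided $^*$-ideal and $U_q(-,+)/(T-\tau)$ inherits a right $U_q(su(2))$-module $^*$-algebra structure, which $\bar\pi_\tau$ transports to $\textrm{Pol}^{\textrm{ext}}(S_{q\tau}^2)$. Any module $^*$-algebra structure on $\textrm{Pol}^{\textrm{ext}}(S_{q\tau}^2)$ is determined by its restriction to the subalgebra $\textrm{Pol}(S_{q\tau}^2)$ generated by $X_\tau,Y_\tau,Z_\tau$ (the action on $Z_\tau^{-1}$ being recursively forced by $Z_\tau Z_\tau^{-1}=1$), so to match the transported structure with the one of Definition~\ref{DefPod} it suffices, by the uniqueness clause there, to check that $\mathrm{span}\{1,X_\tau,Z_\tau,X_\tau^*\}$ is a copy of the trivial plus the spin-$1$ representation. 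Using the explicit formulas $b\lhd K = ZbZ^{-1}$, $b\lhd(q^{-1/2}\lambda^{-1}E)=Z^{-1}[b,X]$, $b\lhd(q^{3/2}\lambda^{-1}F)=[b,Y]Z^{-1}$, one verifies that $\mathrm{span}\{1,X,Z,Y,T\}\subseteq U_q(-,+)$ is a five-dimensional $\lhd$-submodule whose $K$-weights $q^{-2},1,1,1,q^{2}$ force it to decompose as the spin-$1$ representation plus two copies of the trivial one, and which contains the trivial line $\mathbb{C}(T-\tau)$; its image under $p$ is $\mathrm{span}\{1,X_\tau,Z_\tau,Y_\tau\}$, which is therefore that module modulo a trivial line, i.e.\ the trivial plus the spin-$1$ representation. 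Hence $\pi_\tau=\bar\pi_\tau\circ p$ is $U_q(su(2))$-equivariant. Finally, $U_q^{\textrm{fin}}(-,+)$ is generated by $X,Y,Z$, whose images generate $\textrm{Pol}(S_{q\tau}^2)$ and never reach $Z_\tau^{-1}\notin\textrm{Pol}(S_{q\tau}^2)$, so $\pi_\tau(U_q^{\textrm{fin}}(-,+))=\textrm{Pol}(S_{q\tau}^2)$.
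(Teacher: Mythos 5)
Your verification is correct, and it is in fact more than the paper itself offers: the paper disposes of this proposition with ``proven in \cite{DeC2}, but can be immediately verified,'' so your write-up is essentially the verification being delegated. The computation that $X_\tau^*X_\tau$ and $X_\tau X_\tau^*$ reproduce the two quadratic relations of $U_q(-,+)$ with $T\mapsto\tau=q^{-x}-q^x$ is the heart of the matter and is done correctly, as is the observation that $T=qYXZ^{-1}-qZ^{-1}+q^{-1}Z$ forces $\pi_\tau(T)=\tau$. Two places where you lean on standard facts deserve flagging. First, the injectivity of $\bar\pi_\tau$ reduces, as you say, to knowing that the monomials $Z^k$, $Z^kX^l$, $Z^kY^l$ are linearly independent in $\textrm{Pol}^{\textrm{ext}}(S_{q\tau}^2)$; within this paper the cleanest way to close that is to evaluate in the faithful representations $\pi_{\tau,\pm}$ of Proposition \ref{PropReps} and separate the monomials by their $\mathbb{Z}$-grading (band number) and by the eigenvalues of $Z_\tau$ --- your alternative of quoting \cite{DeC2} is exactly what the author does. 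Second, for equivariance your route through the uniqueness clause of Definition \ref{DefPod} only identifies the transported module structure with the canonical one \emph{up to an equivariant automorphism}; since that definition itself only fixes the structure up to isomorphism this is harmless here, but it is worth noting that the transported structure is precisely the inner action $b\lhd K=Z_\tau bZ_\tau^{-1}$, $b\lhd(q^{-1/2}\lambda^{-1}E)=Z_\tau^{-1}[b,X_\tau]$, $b\lhd(q^{3/2}\lambda^{-1}F)=[b,Y_\tau]Z_\tau^{-1}$, which is the content of the paper's subsequent remark that the action on the localized Podle\'{s} sphere is inner. With those two points acknowledged (as you do), the argument is complete.
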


\noindent \emph{Remarks}:\begin{itemize}\item  We note that also the standard Podle\'{s} sphere can be obtained in a similar manner, using instead the $^*$-algebra $U_q(0,+)$ from \cite{DeC2}.
\item From the observations in Section \ref{QUEA}, it follows that the action of $U_q(su(2))$ on the (localized) Podle\'{s} sphere is inner. This was also observed in \cite{Schm1}.
\item The isomorphisms $\sigma_{\tau}$ mentioned before the proposition are then easily seen to be induced from the automorphism $\sigma$ at the end of Section \ref{QUEA}.
\end{itemize}

\noindent The following result gives a classification of all irreducible $^*$-representations of $\textrm{Pol}(S_{q\tau}^2)$ (see \cite{Pod2}).

\begin{Prop}\label{PropReps} Any irreducible $^*$-representation of $\textrm{Pol}(S_{q\tau(x)}^2)$ on a Hilbert space is either \begin{itemize} \item faithful, in which case it is isomorphic to one of the following two $^*$-representations $\pi_{\pm}$ on $l^2(\mathbb{N})$: \[\left\{\begin{array}{lllllll}  Z_{\tau(x)} &\rightarrow & Z_{\tau(x),\pm}&\hspace{-0.3cm}:&e_k&\rightarrow& \pm q^{2k\mp x+1}e_k,\\ X_{\tau(x)}&\rightarrow & X_{\tau(x),\pm}&\hspace{-0.3cm}:&e_k &\rightarrow& \pm (1 - q^{2k})^{1/2}(1 + q^{2k \mp 2x})^{1/2}e_{k-1}.\end{array}\right.\]
\item one-dimensional, by sending $Z_{\tau}$ to zero and $X_{\tau}$ to a complex number of modulus 1.
\end{itemize}
\end{Prop}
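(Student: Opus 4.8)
\emph{Proof sketch.} The plan is to run the classical ladder-operator analysis on an arbitrary $^*$-representation of $\textrm{Pol}(S^2_{q\tau(x)})$ by bounded operators on a nonzero Hilbert space $\mathscr{H}$, organised around the self-adjoint generator $Z$. Put $p(z) := (1-q^{x-1}z)(1+q^{-x-1}z)$; the defining relations of Definition \ref{DefPod} then read $X^*X = p(Z)$ and $XX^* = p(q^2 Z)$, where $X$ and $Z$ denote the images of $X_{\tau}$ and $Z_{\tau}$. Since $p$ is a downward parabola with $p(0)=1$ and roots $q^{1-x}$ and $-q^{x+1}$, the inequality $p(Z) = X^*X\geq 0$ forces $Z$ to be bounded with $\sigma(Z)\subseteq[-q^{x+1},\,q^{1-x}]$.

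Next I would feed the commutation relation $XZ = q^2 ZX$ into the Borel functional calculus of $Z$: it gives $f(Z)X = Xf(q^{-2}Z)$ for every bounded Borel $f$, hence for the spectral measure $E$ of $Z$ the covariances $XE(B) = E(q^{-2}B)X$ and $X^*E(B)=E(q^2B)X^*$. Because the dilations $B\mapsto q^{\pm 2}B$ fix each of $(0,\infty)$, $\{0\}$, $(-\infty,0)$, the projections $E((0,\infty))$, $E(\{0\})$ and $E((-\infty,0))$ commute with $X$, $X^*$ and $Z$; in an irreducible representation exactly one of them equals $1$. If $E(\{0\})=1$ then $Z=0$, so $X^*X = p(0) = 1 = XX^*$ makes $X$ unitary, the representation has abelian image, and irreducibility forces $\dim\mathscr{H}=1$: these are the one-dimensional representations with $X_{\tau}\mapsto\gamma$, $|\gamma|=1$. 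It remains to treat the cases $E((0,\infty))=1$ and $E((-\infty,0))=1$, which are mirror images of one another; I carry out the first.

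So assume $\sigma(Z)\subseteq(0,\infty)$; then $\sigma(Z)\subseteq(0,q^{1-x}]$ is compact. Within $(0,\infty)$ the only zero of $p$ is $q^{1-x}$ and the only zero of $z\mapsto p(q^2z)$ is $q^{-1-x}$, so $\ker X = \ker p(Z) = E(\{q^{1-x}\})\mathscr{H} =: \mathscr{H}_0$, while $\ker X^* = E(\{q^{-1-x}\})\mathscr{H} = 0$ because $q^{-1-x}>q^{1-x}\geq\max\sigma(Z)$. The crucial step is that $\mathscr{H}_0\neq 0$: were it zero, $X$ would be injective with dense range, so its polar part $V$ would be unitary; since $|X| = p(Z)^{1/2}$ commutes with $Z$, the relation $XZ=q^2ZX$ would collapse to $V^*ZV = q^{-2}Z$, forcing $\sigma(Z)=q^{-2}\sigma(Z)$, which is absurd for a nonempty bounded set avoiding $0$.

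Finally, fix a unit vector $\xi_0\in\mathscr{H}_0$ (so $Z\xi_0 = q^{1-x}\xi_0$, $X\xi_0=0$) and set $\xi_k := \|(X^*)^k\xi_0\|^{-1}(X^*)^k\xi_0$. A one-step induction using $XX^* = p(q^2Z)$ gives $\|(X^*)^k\xi_0\|^2 = \prod_{j=1}^k p(q^{2j+1-x}) = \prod_{j=1}^k(1-q^{2j})(1+q^{2j-2x})>0$, so each $\xi_k$ is a genuine unit eigenvector of $Z$ with eigenvalue $q^{2k+1-x}$; in particular the $\xi_k$ are orthonormal and $X\xi_k = \big((1-q^{2k})(1+q^{2k-2x})\big)^{1/2}\xi_{k-1}$, $X\xi_0=0$. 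The closed span of the $\xi_k$ is thus invariant under $Z$, $X$ and $X^*$, hence equals $\mathscr{H}$ by irreducibility, and $\xi_k\mapsto e_k$ is a unitary intertwiner with $\pi_+$. In the case $\sigma(Z)\subseteq(-\infty,0)$ the identical argument gives an orthonormal eigenbasis with eigenvalues $-q^{2k+x+1}$ and $X$-coefficients $-\big((1-q^{2k})(1+q^{2k+2x})\big)^{1/2}$, the sign being absorbed by conjugating with the diagonal unitary $e_k\mapsto(-1)^ke_k$, giving $\pi_-$. The main obstacle is the step $\mathscr{H}_0\neq 0$ (equivalently, that $Z$ attains its top eigenvalue rather than accumulating there continuously); the rest is routine bookkeeping, the only mild subtleties being the identity $f(Z)X = Xf(q^{-2}Z)$ and the signs in the negative-spectrum case.
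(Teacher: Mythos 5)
The paper gives no proof of this proposition at all --- it is quoted directly from Podle\'{s}'s classification \cite{Pod2} --- so there is no in-text argument to compare against; your reconstruction is essentially the standard lowest-weight/ladder argument, and it is correct. The skeleton is sound: $\sigma(Z)\subseteq[-q^{x+1},q^{1-x}]$ from $X^*X=p(Z)\geq 0$; the covariance $XE(B)=E(q^{-2}B)X$ makes $E((0,\infty))$, $E(\{0\})$, $E((-\infty,0))$ central, so irreducibility splits the analysis into three cases; and the one step that genuinely requires an idea --- that $\ker X\neq 0$ when the spectrum is positive --- is handled correctly by the polar-decomposition argument forcing $\sigma(Z)=q^{-2}\sigma(Z)$. (Note that your parenthetical ``avoiding $0$'' is not quite what you get from $E(\{0\})=0$, since $0$ could still be a limit point of $\sigma(Z)$; but your contradiction only uses that $\max\sigma(Z)>0$ and $q^{-2}\max\sigma(Z)>\max\sigma(Z)$, so nothing breaks.) Two small loose ends if you want to match the statement verbatim: the proposition asserts the dichotomy ``faithful versus one-dimensional,'' so you should also record that $\pi_{\pm}$ are themselves irreducible (simple spectrum of $Z$ plus the ladder connecting all eigenvectors) and faithful (a linear-independence argument on the monomials $X^mZ^n$ and $(X^*)^mZ^n$, in the spirit of Lemma \ref{LemFaith}); both are routine. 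The extension of $f(Z)X=Xf(q^{-2}Z)$ from polynomials to Borel $f$ is the usual monotone-class bookkeeping and is fine to leave implicit.
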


\noindent Note that the above also classifies all irreducible representations of $U_q^{\textrm{fin}}(-,+)$, which were computed in \cite{Schm1}. It is obvious what is meant then by the $^*$-representation \[\pi_{\tau,\pm}: U_q^{\textrm{fin}}(-,+) \rightarrow B(l^2(\mathbb{N})).\] The equality \[\pi_{\tau,-} = \pi_{-\tau,+}\circ \sigma\] is easily observed. If we consider the pre-Hilbert space $V=\mathbb{C}\lbrack \mathbb{N}\rbrack$ with its natural orthonormal basis $e_k$, we can represent $\textrm{Pol}^{\textrm{ext}}(S_{q\tau}^{2})$ (and $U_q(-,+)$) as a $^*$-algebra of adjointable endomorphisms of $V$ (i.e.~ banded operators) by the same formulas as the one in the foregoing proposition. To avoid overloading the notation, we will make no distinction between an element in $\pi_{\pm}(\textrm{Pol}(S_{q\tau}^2))$ seen as an operator on $l^2(\mathbb{N})$ or its restriction to $V$ (this will not lead us astray).\\

\noindent Let us complete our notational conventions with the following.

\begin{Not}\label{NotCom} For $b\in \{X,Y,Z\}$, we will identify $b_{\tau}$ with the operator \[b_{\tau} := b_{\tau,-}\oplus b_{\tau,+} \qquad \in L(V\oplus V),\] and then write $\mathbf{b}_{\tau}$ for the operator \[\mathbf{b}_{\tau} := b_{-\tau,+}\oplus b_{\tau,+} \qquad \in L(V\oplus V).\]

\noindent We write \[\textrm{Pol}_+(S_{q\tau}^2),\textrm{Pol}_-(S_{q\tau}^2),\textrm{Pol}(S_{q\tau}^2),\textrm{Pol}_a(S_{q\tau}^2)\] for the images of $U_q^{\textrm{fin}}(-,+)$ under the respective representations, which we label by the same convention. \\
\end{Not}

\section{Equivariant Morita equivalence for the Podle\'{s} spheres}\label{SecEqMor}

\noindent In this section, we will prove Theorem \ref{Theo1}.\\

\noindent We will fix $x\in (-\infty,+\infty)$, and write $\tau:= \tau(x)$.\\

\begin{Not}\label{NotExp} For $w\in \{+,-,\;,a\}$, we consider $\textrm{Pol}_w^{\textrm{ext}}(S_{q\tau}^2)\odot M_2(\mathbb{C})$ with the right $U_q(su(2))$-module $^*$-algebra structure as at the end of Section \ref{SecQG}, using on $\mathbb{C}^2$ the spin $1/2$-representation $\pi_{1/2}$. We then denote by $\pi_{\tau,w}^{(2)}$ the morphism from $U_q(-,+)$ to $\textrm{Pol}_w^{\textrm{ext}}(S_{q\tau}^2)\odot M_2(\mathbb{C})$ as in Lemma \ref{LemMod}.2.\end{Not}

\noindent We will further denote by $\{e_+,e_-\}$ the canonical basis of $\mathbb{C}^2$, so that $U_q(su(2))$ acts by \[\left\{\begin{array}{lll}Ke_{\pm} &=& q^{\mp 1} e_{\pm}\\ E e_{\pm} &=& q^{1/2}\delta_{\pm,+}e_-\\Fe_{\pm} &=& q^{-1/2}\delta_{\pm,-}e_+\end{array}\right.\] We further denote the product basis elements $e_{k}\otimes e_{\pm}$ of $l^2(\mathbb{N})\otimes \mathbb{C}^2$ as $e_{k,\pm}$.

\begin{Prop}\label{PropEigv} The self-adjoint operator $T_{\tau,+}^{(2)}:=\pi_{\tau,+}^{(2)}(T)$ is bounded, its spectrum consisting of two eigenvalues. Moreover, it is an invariant element in $\textrm{Pol}_+(S_{q\tau}^2)\odot M_2(\mathbb{C})$.\end{Prop}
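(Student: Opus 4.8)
The plan is to compute $T_{\tau,+}^{(2)}$ explicitly as an element of $\textrm{Pol}_+(S_{q\tau}^2)\odot M_2(\mathbb{C})$ using the formula from Lemma~\ref{LemMod}.2, namely $\pi_{\tau,+}^{(2)} = (\pi_{\tau,+}\otimes \pi_{1/2})\circ\alpha$, and then read off its spectrum. First I would apply $(\pi_{\tau,+}\otimes\pi_{1/2})$ to the explicit expression for $\alpha(T)$ given in the proposition preceding the paragraph on the Galois object, i.e. to
\[
\alpha(T) = T\otimes K + Z\otimes(\lambda^{-2}FE - q^{-1}(K-K^{-1})) + X\otimes(q^{1/2}\lambda^{-1}F) + Y\otimes(q^{1/2}\lambda^{-1}EK).
\]
Since $\pi_{\tau,+}(T) = \tau\cdot 1$ by Proposition~\ref{PropQuot} (the kernel of $\pi_\tau$ is generated by $T-\tau$), the first term becomes $\tau\,(1\otimes K)$, a diagonal operator. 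The remaining three terms involve $Z_{\tau,+}$, $X_{\tau,+}$, $Y_{\tau,+}$ tensored against the $2\times2$ matrices $\pi_{1/2}(FE - \ldots)$, $\pi_{1/2}(F)$, $\pi_{1/2}(EK)$, all of which are known explicitly from the displayed action of $E,F,K$ on $\{e_+,e_-\}$. This shows immediately that $T_{\tau,+}^{(2)} \in \textrm{Pol}_+(S_{q\tau}^2)\odot M_2(\mathbb{C})$ (no inverse of $Z$ appears: the only possibly-problematic term $X\otimes(\ldots)F$ and its adjoint involve off-diagonal matrix units and land back in the non-localized algebra), and self-adjointness is inherited from $T=T^*$ together with the fact that $\pi_{\tau,+}^{(2)}$ is a $^*$-homomorphism. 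Invariance is automatic: $T$ is central and self-adjoint in $U_q(-,+)$, hence a fixed vector for the adjoint action, and by Lemma~\ref{LemMod}.2 the map $\pi_{\tau,+}^{(2)}$ is a morphism of right $U_q(su(2))$-module $^*$-algebras, so it sends the invariant element $T$ to an invariant element.

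The substantive part is the spectral claim. Here I would compute $(\pi_{\tau,+}^{(2)}(T))^2$, or rather its minimal polynomial, directly. Working in the basis $e_{k,\pm}$ of $V\oplus V$ (using the banded-operator realization on the pre-Hilbert space $V=\mathbb{C}[\mathbb{N}]$), the operator $T_{\tau,+}^{(2)}$ is a banded operator, and the cleanest route is to verify by a finite computation that it satisfies a quadratic relation $(T_{\tau,+}^{(2)})^2 = \alpha\, T_{\tau,+}^{(2)} + \beta$ for explicit scalars $\alpha,\beta$ depending on $q$ and $\tau$. This is plausible because, via the isomorphism $U_q(-,+)\cong U_q(su(2))$ that forgets the $^*$-structure, $T$ corresponds to (an imaginary multiple of) the Casimir of $U_q(su(2))$, and $\alpha(T)$ is then just $\Delta$ applied to the Casimir; so $\pi_{\tau,+}^{(2)}(T)$ is, up to the Wick-rotation rescaling, the value of the Casimir on a tensor product of a "spin $\frac{ix}{2}$-type" representation with the spin-$\frac12$ representation, which decomposes into exactly two pieces and hence carries only two Casimir eigenvalues. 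The two roots of the quadratic are then the two eigenvalues, and boundedness follows since a self-adjoint operator satisfying a fixed quadratic polynomial equation has spectrum contained in the (two-point) root set.

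The main obstacle I anticipate is purely computational bookkeeping: assembling $\pi_{\tau,+}^{(2)}(T)$ from the four summands and the explicit $2\times2$ matrices without sign or $q$-power errors, and then checking the quadratic identity. A conceptual shortcut that avoids most of this is to exploit the forgetful isomorphism $U_q(-,+)\cong U_q(su(2))$ more aggressively: under it $\alpha$ becomes $\Delta$, and $\pi_{\tau,+}$ becomes a (non-$^*$) representation of $U_q(su(2))$ on $V$ on which the Casimir acts by a scalar determined by $\tau$; then $\pi_{\tau,+}^{(2)}(T)$ is the Casimir acting on that representation tensored with $\pi_{1/2}$, and the standard fusion rule "spin $j$ $\otimes$ spin $\frac12$ = spin $(j+\frac12)$ $\oplus$ spin $(j-\frac12)$" — valid at the level of the action of $K,E,F$ regardless of the $^*$-structure — gives the decomposition into two generalized eigenspaces of the Casimir, hence the two eigenvalues, with their explicit values read off from the Casimir's scalar on each summand. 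I would present the direct quadratic-relation computation as the primary argument (it is self-contained and confirms boundedness at once), and mention the representation-theoretic interpretation as the reason it works and as the source of the explicit eigenvalues, which will be needed in the subsequent localized spectral decomposition.
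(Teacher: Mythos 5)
Your proposal is correct and follows essentially the same route as the paper: invariance is deduced from the centrality of $T$ together with the equivariance of $\pi_{\tau,+}^{(2)}$, and the two-point spectrum is obtained by an explicit finite computation with the banded operator on $V\odot\mathbb{C}^2$ --- the paper organizes this as showing that $T_{\tau,+}^{(2)}$ preserves the two-dimensional subspaces $\textrm{span}\{e_{k,+},e_{k+1,-}\}$ and that the resulting $2\times 2$ matrices have the $k$-independent eigenvalues $\tau(x\pm 1)$, which is equivalent to your quadratic minimal-polynomial identity (and has the added benefit of producing the explicit eigenvectors recorded in Lemma \ref{LemEig}, which are needed later). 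The only other differences are cosmetic: the paper deduces membership in $\textrm{Pol}_+(S_{q\tau}^2)\odot M_2(\mathbb{C})$ from Lemma \ref{LemFin} rather than from the explicit form of $\alpha(T)$, and your Casimir/fusion heuristic is exactly the paper's ``Wick-rotated Casimir'' remark, provided you replace the finite-dimensional fusion rule by the Verma-filtration statement for $V\otimes\mathbb{C}^2$ and note that the two central characters $\tau(x\pm1)$ are always distinct, so the filtration actually splits.
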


\begin{proof} First of all, it is clear that $T_{\tau,+}^{(2)}$ will be invariant, as $T$ is invariant for the adjoint action $\lhd$ (it is a central element of $U_q(-,+)$), and $\pi_{\tau,+}^{(2)}$ is equivariant. Then $T_{\tau,+}^{(2)}\in \textrm{Pol}_+(S_{q\tau}^2)\odot M_2(\mathbb{C})$ by Lemma \ref{LemFin} and the remark after it.\\

\noindent Next, a straightforward computation shows that $T_{\tau,+}^{(2)}$ preserves $\textrm{span}\{e_{k,+},e_{k+1,-}\}$ for $k\geq 0$, with the resulting 2-by-2-matrix being given by \[\left(\begin{array}{cc}((q^{-x}-q^x)q^{-1}-(q^{-1}-q)q^{2k-x+2})& \lambda^{-1}(1-q^{2k+2})^{1/2}(1+q^{2k-2x+2})^{1/2} \\ \lambda^{-1}(1-q^{2k+2})^{1/2}(1+q^{2k-2x+2})^{1/2} &  ((q^{-x}-q^x)q+(q^{-1}-q)q^{2k-x+2})\end{array}\right).\] (The remaining vector $e_{0,-}$ is an eigenvector, with eigenvalue the right lower corner of the above matrix with $k=-1$).\\

\noindent We find that the eigenvalues of these matrices are $q^{-(x+1)}-q^{x+1}$ and $q^{-(x-1)}-q^{x-1}$, and in particular are independent of $k$. This proves that $T_{\tau,+}^{(2)}$ has precisely two eigenvalues.\\

\end{proof}

\noindent \emph{Remark:} Note that the eigenvalues of $T_{\tau,+}^{(2)}$ naturally appear as \emph{differences} of $q$-powers, in contrast with the classical Casimir element of $U_q(su(2))$ whose eigenvalues are \emph{sums} of $q$-powers.\\

\noindent For further reference, we write down a basis of orthogonal eigenvectors for $T_{\tau,+}^{(2)}$.

\begin{Lem}\label{LemEig} An orthonormal set of eigenvectors for $T_{\tau,+}^{(2)}$ at eigenvalues $\tau(x\pm 1)$ is given by the $\xi^{\tau(x\pm1)}_{k,+}$ respectively, where $k\in \mathbb{N}$ and \begin{eqnarray*} \sqrt{1+q^{2x}}\cdot\xi^{\tau(x+1)}_{k,+} &=& (e_{k-1,+}\quad e_{k,-})\cdot\left(\begin{array}{ll} - (1-q^{2k})^{1/2}\\ q^{x}(1+q^{2k-2x})^{1/2}\end{array}\right),\\ \sqrt{1+q^{2x}}\cdot\xi^{\tau(x-1)}_{k,+} &=& (e_{k,+}\quad e_{k+1,-})\cdot \left(\begin{array}{l} q^x(1+q^{2k-2x+2})^{1/2} \\ (1-q^{2k+2})^{1/2}\end{array}\right).\end{eqnarray*}
\end{Lem}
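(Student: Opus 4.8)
The plan is to verify directly that each proposed vector is an eigenvector of $T_{\tau,+}^{(2)}$ at the claimed eigenvalue, using the explicit $2\times 2$ matrix computed in the proof of Proposition \ref{PropEigv}. First I would fix $k\in\mathbb{N}$ and recall from that proof that on the two-dimensional subspace $\mathrm{span}\{e_{k,+},e_{k+1,-}\}$ the operator $T_{\tau,+}^{(2)}$ acts (in the ordered basis $(e_{k,+},e_{k+1,-})$) by
\[
M_k=\begin{pmatrix} \tau(x) q^{-1}-(q^{-1}-q)q^{2k-x+2} & \lambda^{-1}(1-q^{2k+2})^{1/2}(1+q^{2k-2x+2})^{1/2}\\[1mm] \lambda^{-1}(1-q^{2k+2})^{1/2}(1+q^{2k-2x+2})^{1/2} & \tau(x)q+(q^{-1}-q)q^{2k-x+2}\end{pmatrix},
\]
whose two eigenvalues were shown to be $\tau(x+1)=q^{-(x+1)}-q^{x+1}$ and $\tau(x-1)=q^{-(x-1)}-q^{x-1}$. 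The vector $\sqrt{1+q^{2x}}\,\xi^{\tau(x-1)}_{k,+}$ is exactly a vector in this subspace, so checking $M_k\binom{q^x(1+q^{2k-2x+2})^{1/2}}{(1-q^{2k+2})^{1/2}}=\tau(x-1)\binom{q^x(1+q^{2k-2x+2})^{1/2}}{(1-q^{2k+2})^{1/2}}$ is a finite computation: the first row gives $\tau(x)q^{-1}q^x(1+q^{2k-2x+2})^{1/2}-(q^{-1}-q)q^{2k+2-x}q^x(1+q^{2k-2x+2})^{1/2}+\lambda^{-1}(1-q^{2k+2})(1+q^{2k-2x+2})^{1/2}$, and after dividing by $(1+q^{2k-2x+2})^{1/2}$ one must verify a scalar identity in $q$-powers; similarly for the second row after dividing by $(1-q^{2k+2})^{1/2}$. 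For $\xi^{\tau(x+1)}_{k,+}$ one shifts $k\mapsto k-1$ so that the relevant subspace is $\mathrm{span}\{e_{k-1,+},e_{k,-}\}$, on which $T_{\tau,+}^{(2)}$ acts by $M_{k-1}$; the proposed vector $\binom{-(1-q^{2k})^{1/2}}{q^x(1+q^{2k-2x})^{1/2}}$ is then checked to be a $\tau(x+1)$-eigenvector of $M_{k-1}$ by the same kind of two-line verification.

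Next I would address the boundary behaviour and orthonormality. For $k=0$ the vector $\xi^{\tau(x+1)}_{0,+}$ involves $e_{-1,+}$, which by the convention on bases indexed by $\mathbb{N}$ is interpreted as $0$; so $\sqrt{1+q^{2x}}\,\xi^{\tau(x+1)}_{0,+}=q^x(1+q^{-2x})^{1/2}e_{0,-}$, which up to normalization is precisely the leftover eigenvector $e_{0,-}$ singled out in Proposition \ref{PropEigv}, with eigenvalue the right-lower corner of $M_{-1}$, namely $\tau(x)q+(q^{-1}-q)q^{-x}=q^{-x-1}-q^{x+1}=\tau(x+1)$; this is consistent. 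Orthonormality within each eigenspace is immediate because the supports $\{e_{k-1,+},e_{k,-}\}$ (resp. $\{e_{k,+},e_{k+1,-}\}$) for distinct $k$ are disjoint, and within a single $k$ the squared norm of $\sqrt{1+q^{2x}}\,\xi^{\tau(x\pm1)}_{k,+}$ is $(1-q^{2k})+q^{2x}(1+q^{2k-2x})=1+q^{2x}$ (resp. $q^{2x}(1+q^{2k-2x+2})+(1-q^{2k+2})=1+q^{2x}$), so the stated normalizing factor $1/\sqrt{1+q^{2x}}$ is correct. Orthogonality between the two eigenspaces is automatic since they are eigenspaces of the self-adjoint operator $T_{\tau,+}^{(2)}$ at distinct eigenvalues $\tau(x+1)\neq\tau(x-1)$ (they are distinct for real $x$ since $\tau$ is strictly monotone). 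One should also note the two families together span all of $l^2(\mathbb{N})\otimes\mathbb{C}^2$: the vectors $\xi^{\tau(x-1)}_{k,+}$ for $k\geq 0$ together with $\xi^{\tau(x+1)}_{k,+}$ for $k\geq 0$ lie in the subspaces $\mathrm{span}\{e_{k,+},e_{k+1,-}\}$ (for $k\geq 0$) and $\{e_{0,-}\}$, which exhaust the Hilbert space, and within each such $2$-dimensional block the two vectors are orthogonal hence a basis.

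The only mild obstacle is bookkeeping: one must be careful with the index shift $k\mapsto k-1$ relating the two families (the natural parametrization of the $2\times 2$ blocks used in Proposition \ref{PropEigv} is by the lower index of the $e_{-}$-component minus one, or equivalently by the index of the $e_{+}$-component), and with the $k=0$ degenerate case where one summand of $\xi^{\tau(x+1)}_{0,+}$ drops out. Beyond that, the proof is a direct substitution into the matrix $M_k$ already exhibited, so I would present it tersely: state that $T_{\tau,+}^{(2)}$ restricted to $\mathrm{span}\{e_{k,+},e_{k+1,-}\}$ is $M_k$, record the two eigenvalue identities, exhibit the eigenvectors of $M_k$ (which are exactly the $\xi$'s after reindexing), remark on $e_{0,-}$, and compute the norms. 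No deeper input than Proposition \ref{PropEigv} is needed.
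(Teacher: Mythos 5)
Your proposal is correct and takes essentially the same route as the paper: Lemma \ref{LemEig} is stated there without a separate proof precisely because it amounts to diagonalizing the $2\times 2$ blocks exhibited in the proof of Proposition \ref{PropEigv}, handling the leftover vector $e_{0,-}$ (your identification of it with $\xi^{\tau(x+1)}_{0,+}$ via the convention $e_{-1,+}=0$ is right), and computing the norms, all of which you do correctly.
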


\noindent We can also introduce an operator $T_{\tau,-}^{(2)}$ w.r.t.~ $\pi_{\tau,-}$ in a similar way, and the relation $\pi_{\tau,-} = \pi_{-\tau,+}\circ \sigma$ then immediately gives that $T_{\tau,-}^{(2)} = -T_{-\tau,+}^{(2)}$. We denote the respective eigenvectors for the eigenvalues $\tau(x\pm 1)$ of $T_{\tau,-}^{(2)}$ as \begin{eqnarray*}  \sqrt{1+q^{2x}}\cdot\xi^{\tau(x+1)}_{k,-} &=& (e_{k,+}\quad e_{k+1,-})\cdot \left(\begin{array}{l} (1+q^{2k+2x+2})^{1/2} \\ q^x(1-q^{2k+2})^{1/2}\end{array}\right),\\ \sqrt{1+q^{2x}}\cdot\xi^{\tau(x-1)}_{k,-} &=& (e_{k-1,+}\quad e_{k,-})\cdot\left(\begin{array}{ll} - q^x(1-q^{2k})^{1/2}\\ (1+q^{2k+2x})^{1/2}\end{array}\right).\end{eqnarray*}

\noindent We will also need to know the invariant functional on $\textrm{Pol}(S_{q\tau}^2)$. The following result was proven in \cite{Mim1} (see also \cite{Schm1}).

\begin{Prop}\label{PropFunc} Let $\varphi_{\tau}$ be the faithful normal positive functional on $B(l^2(\mathbb{N})\oplus l^2(\mathbb{N}))$ which has $\mathbf{Z}_{\tau}$ as its associated trace class operator. Then the restriction of $\varphi_{\tau}$ to $\textrm{Pol}(S_{q\tau}^2)$ is $U_q(su(2))$-invariant.\end{Prop}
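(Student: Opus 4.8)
The plan is to verify invariance by directly exhibiting the relevant functional on the whole $^*$-algebra and checking the defining condition on a spanning set. Recall that invariance of a functional $\psi$ on $\textrm{Pol}(S_{q\tau}^2)$ under the right $U_q(su(2))$-module $^*$-algebra structure means $\psi(a\cdot g) = \varepsilon(g)\psi(a)$ for all $g\in U_q(su(2))$ and $a\in\textrm{Pol}(S_{q\tau}^2)$. Since $U_q(su(2))$ is generated by $K,K^{-1},E,F$ and $\varepsilon(K)=1$, $\varepsilon(E)=\varepsilon(F)=0$, it suffices to check: (i) $\varphi_\tau(a\cdot K)=\varphi_\tau(a)$, and (ii) $\varphi_\tau(a\cdot E)=0=\varphi_\tau(a\cdot F)$ for all $a$ in a linear spanning set of $\textrm{Pol}(S_{q\tau}^2)$. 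Here $\textrm{Pol}(S_{q\tau}^2)$ identifies with $\textrm{Pol}_a(S_{q\tau}^2)=\pi_{\tau,-}(U_q^{\textrm{fin}}(-,+))\oplus\pi_{\tau,+}(U_q^{\textrm{fin}}(-,+))$ acting on $l^2(\mathbb{N})\oplus l^2(\mathbb{N})$, and by the formulas in Section~\ref{QUEA} the adjoint action is implemented inside $U_q(-,+)$ by commutators with $X,Y$ and conjugation by $Z$.

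First I would translate the invariance conditions into operator-algebraic statements using the description of $\varphi_\tau$ via the trace-class operator $\mathbf{Z}_\tau$, i.e.\ $\varphi_\tau(a)=\textrm{Tr}(\mathbf{Z}_\tau\, a)$ (with the convention fixing normalisation). Condition (i) says $\textrm{Tr}(\mathbf{Z}_\tau\,(Z a Z^{-1}))=\textrm{Tr}(\mathbf{Z}_\tau\,a)$; since $Z$ is diagonal and $\mathbf{Z}_\tau$ is a function of $Z$, $Z$ commutes with $\mathbf{Z}_\tau$ and cyclicity of the trace gives (i) immediately. For (ii), using $a\lhd(q^{-1/2}\lambda^{-1}E)=Z^{-1}[a,X]$ and $a\lhd(q^{3/2}\lambda^{-1}F)=[a,Y]Z^{-1}$, the conditions become $\textrm{Tr}(\mathbf{Z}_\tau Z^{-1}(aX-Xa))=0$ and $\textrm{Tr}(\mathbf{Z}_\tau(aY-Ya)Z^{-1})=0$. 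Rearranging by cyclicity, these are equivalent to $\textrm{Tr}\big((\mathbf{Z}_\tau Z^{-1}X - X\mathbf{Z}_\tau Z^{-1})a\big)=0$ and the analogous statement with $Y$; so it suffices to show that $\mathbf{Z}_\tau Z^{-1}X - X\mathbf{Z}_\tau Z^{-1}=0$ and $\mathbf{Z}_\tau Y Z^{-1} - Y\mathbf{Z}_\tau Z^{-1}=0$ as (densely defined) operators on $V\oplus V$. Using the commutation relation $XZ=q^2ZX$ one computes $X\mathbf{Z}_\tau Z^{-1}$ versus $\mathbf{Z}_\tau Z^{-1}X$ explicitly on the basis vectors $e_k$ (in each of the two summands, with $\mathbf{Z}_\tau$ given by the eigenvalues from Notation~\ref{NotCom} and Proposition~\ref{PropReps}); the point is that $\mathbf{Z}_\tau$ is engineered precisely so that the $q$-power weights cancel, i.e.\ $\mathbf{Z}_\tau$ is the unique (up to scalar) diagonal operator with $X\mathbf{Z}_\tau Z^{-1}=\mathbf{Z}_\tau Z^{-1}X$. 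I would carry this out as a short direct computation of eigenvalue ratios on each summand, using that $\mathbf{Z}_\tau=Z_{-\tau,+}\oplus Z_{\tau,+}$ with $Z_{\pm\tau,+}e_k=\pm q^{2k\mp x+1}e_k$.

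A subtlety to address is that $\varphi_\tau$ is a priori only a trace-class (hence normal, bounded) functional on $B(l^2(\mathbb{N})\oplus l^2(\mathbb{N}))$, whereas the elements $X,Y$ and their commutators with polynomial elements $a$ are genuinely unbounded banded operators; so the manipulations with cyclicity of the trace must be justified. The clean way is to note that for $a\in\textrm{Pol}(S_{q\tau}^2)$ the operator $\mathbf{Z}_\tau a$ is trace class (since $\mathbf{Z}_\tau$ decays like $q^{2k}$ and $a$ has at most polynomial growth in $k$, being banded with polynomially-bounded matrix entries), and similarly for all the product expressions appearing; then $\textrm{Tr}(\mathbf{Z}_\tau[a,X])=\textrm{Tr}([\mathbf{Z}_\tau a,X])+\textrm{Tr}([a,X]... )$-type rearrangements are valid because each individual product ($\mathbf{Z}_\tau aX$, $\mathbf{Z}_\tau Xa$, etc.) is trace class and one has $\textrm{Tr}(ST)=\textrm{Tr}(TS)$ whenever $S$ is trace class and $T$ is bounded, or more generally whenever both $ST$ and $TS$ are trace class with matrix entries that are absolutely summable — which holds here by the $q$-decay. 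I expect \emph{this} bookkeeping of trace-class membership and the justification of cyclic rearrangements to be the main technical obstacle; the underlying algebraic identity ($X$ commutes with $\mathbf{Z}_\tau Z^{-1}$) is a one-line computation once the weights are written out.

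Finally I would assemble the pieces: the reduction to checking on generators $K,E,F$ by the co-unit property; condition (i) from $[Z,\mathbf{Z}_\tau]=0$ and cyclicity; condition (ii) from the operator identities $[X,\mathbf{Z}_\tau Z^{-1}]=0=[Y,\mathbf{Z}_\tau Z^{-1}]$ together with the trace-class justification above; and conclude that $\varphi_\tau|_{\textrm{Pol}(S_{q\tau}^2)}$ is $U_q(su(2))$-invariant, hence (being faithful and positive, after normalising to a state) is the unique invariant state $\varphi_{\mathbb{X}}$ guaranteed by the general theory in Section~1.2. Since the uniqueness of the invariant state is already known, it would in fact suffice to verify that $\varphi_\tau$ is invariant, as claimed; I would not need to re-prove faithfulness or positivity, these being part of the hypothesis on $\varphi_\tau$.
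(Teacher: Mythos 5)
Your reduction to the generators $K,E,F$, the use of the inner implementation of the adjoint action, and the cyclicity-of-trace argument against $\mathbf{Z}_\tau$ follow the same route as the paper, and your key operator identity is correct --- in fact $\mathbf{Z}_\tau Z_\tau^{-1}$ is simply the scalar $-1$ on the first summand and $+1$ on the second, so it commutes with everything in $\textrm{Pol}^{\textrm{ext}}(S_{q\tau}^2)$ and there is no delicate cancellation of $q$-powers to verify. The gap is exactly at the step you flag as ``the main technical obstacle'' and then dismiss: the trace-class bookkeeping does \emph{not} ``hold here by the $q$-decay''. The $q^{2k}$-decay of $\mathbf{Z}_\tau$ is cancelled exactly by the $q^{-2k}$-growth of $Z^{-1}$, so for instance $\mathbf{Z}_\tau X Y Z^{-1}$ is a diagonal operator whose entries tend to $1$ and whose trace diverges. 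Consequently the splitting of $\textrm{Tr}(\mathbf{Z}_\tau \lbrack a,Y\rbrack Z^{-1})$ into two separate traces, and the subsequent cyclic rearrangement, are invalid whenever $a$ does not carry a factor of $Z$. This is not a removable technicality: for $a=X$ one has $X\lhd(q^{3/2}\lambda^{-1}F)=\lbrack X,Y\rbrack Z^{-1}=(q-q^{-1})\tau-(q^2-q^{-2})Z_\tau$, and the required vanishing $\varphi_\tau(X\lhd F)=0$ amounts to the genuine numerical identity $(q-q^{-1})\tau\,\textrm{Tr}(\mathbf{Z}_\tau)=(q^2-q^{-2})\,\textrm{Tr}(\mathbf{Z}_\tau Z_\tau)$, which your argument never establishes. (It does hold: summing the geometric series, both sides equal $-(q^{-2x}-q^{2x})$.)

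This is precisely why the paper's proof splits the verification in two: the commutator/cyclicity argument is applied only to elements of $\textrm{Pol}(S_{q\tau}^2)\cdot Z$, for which all the products occurring in the rearrangement really are trace class; what remains is spanned modulo $\textrm{Pol}(S_{q\tau}^2)\cdot Z$ by the $X^n$ and $(X^*)^n$, where the cases $n\geq 2$ are trivial (the outputs are of the form $X^{n-1}Q(Z)$, strictly off-diagonal, so their pairing with the diagonal operator $\mathbf{Z}_\tau$ vanishes) and the case $n=1$ is settled by the explicit computation above. To complete your proof you must add this case analysis and carry out the $n=1$ evaluation for $X$ and $X^*$.
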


\noindent One way to prove this is as follows: we want to show $\varphi_{\tau}(x\lhd b)=\varphi_{\tau}(x)\varepsilon(b)$ for $x\in \textrm{Pol}(S_{q\tau}^2)$ and $b\in \{X,Z,Y\}$. First show invariance for elements in $\textrm{Pol}(S_{q\tau}^2)\cdot Z$, which are trace class operators. One can use here the formulas in terms of the inner action without worrying about the unboundedness (of $Z^{-1}$ and the trace $Tr$). One is left with showing invariance for elements of the form $X^n$ or $(X^*)^n$ with $n\in \mathbb{N}$. But the only non-trivial case to consider is $n = 1$, for which we can simply compute the values.\\

\noindent We now give a proof of Theorem \ref{Theo1}.

\begin{proof}[Proof (of Theorem \ref{Theo1})]

\noindent Let us first note that we can apply Proposition \ref{PropEquiv} to $\textrm{Pol}(S_{q\tau}^2)\odot M_2(\mathbb{C})$, so that we can work on the level of $U_q(su(2))$.\\

\noindent We use the notation of Proposition \ref{PropEigv}. Write \[p\in \textrm{Pol}_+(S_{q\tau}^2)\odot M_2(\mathbb{C})\] for the eigenprojection of $T_{\tau,+}^{(2)}$ corresponding to the eigenvalue $\tau(x+1)$. Then by Proposition \ref{PropQuot}, the restriction of $\pi_{\tau,+}^{(2)}$ to $p(V\odot \mathbb{C}^2)$ factors through $\textrm{Pol}^{\textrm{ext}}(S_{q\tau(x+1)}^2)$. As the image of $Z_{\tau(x+1)}$ is easily seen to have distinct non-zero positive eigenvalues, it follows from the classification of $^*$-representations of the $\textrm{Pol}(S_{q\tau}^2)$ that this representation of $\textrm{Pol}(S_{q\tau(x+1)}^2)$ on $p(l^2(\mathbb{N})\otimes \mathbb{C}^2)$ is a copy of $\pi_{\tau(x+1),+}$. (In fact, one may check directly that the isomorphism is simply given by sending $\xi^{\tau(x+1)}_{k,+}$ to $e_k$.) The similar statements hold for the eigenspace of $\tau(x-1)$, as well as for the $\pi_{-}$-representations.\\

\noindent Let us denote $B = p(\textrm{Pol}(S_{q\tau}^2)\odot M_2(\mathbb{C}))p$. To see that \begin{equation}\tag{*}\label{Eqset} B = \textrm{Pol}(S_{q\tau(x+1)}^2), \end{equation} let us first remark that, by the preceding paragraph, the restriction of $\pi_{\tau(x)}^{(2)}$ to $p((V\oplus V)\otimes \mathbb{C}^2)$ is precisely $\pi_{\tau(x+1)}$. Hence $\textrm{Pol}(S_{q\tau(x+1)}^2) \subseteq B$ equivariantly. Further, if $\varphi_{1/2}$ is the invariant state on $M_2(\mathbb{C})$ for the adjoint spin 1/2-action of $SU_q(2)$, and $\varphi_{\tau}$ the invariant functional of the previous proposition, then $\varphi_{\tau}\otimes \varphi_{1/2}$ is invariant on $\textrm{Pol}(S_{q\tau}^2)\odot M_2(\mathbb{C})$. It follows that there exists a faithful normal functional on $B(l^2(\mathbb{N})\oplus l^2(\mathbb{N}))$ which restricts to an invariant functional on $B$. Now we remark that $B\subseteq B(l^2(\mathbb{N}))\oplus B(l^2(\mathbb{N})) \subseteq B(\mathscr{H}_{\tau(x+1)})$. As $B(l^2(\mathbb{N}))\oplus B(l^2(\mathbb{N}))$ is easily seen to equal $\textrm{Pol}(S_{q\tau(x+1)}^2)''$, we can conclude (\ref{Eqset}) by Lemma \ref{LemvN}.\\

\noindent Hence $S_{q\tau(x)}^2$ and $S_{q\tau(x+1)}^2$ are $SU_q(2)$-Morita equivalent by Proposition \ref{LemMor}. This proves that all Pod-le\'{s} spheres $S_{q\tau(x)}^2$ and $S_{q\tau(y)}$ with $x,y\in \mathbb{R}$ and $x-y\in \mathbb{Z}$ are $SU_q(2)$-Morita equivalent.\\

\noindent As the spin 1/2 representation is generating, it follows from Lemma \ref{LemInd} that $S_{q\tau(x)}$ is equivariantly Morita equivalent with some $\mathbb{X}$ iff $\mathbb{X}\cong S_{q\tau(y)}$ for some $y$ with $x-y\in \mathbb{Z}$. The statement of Theorem \ref{Theo1} now follows for $x\neq \infty$ by observing that $\textrm{Pol}(S_{q\tau(x)}) \cong \textrm{Pol}(S_{q\tau(-x)})$.\\

\noindent Finally, the standard Podle\'{s} sphere $S_{q0}^2$ is only equivariantly Morita equivalent with itself by the remark after Proposition \ref{LemMor}. Indeed, it is the quotient space of $SU_q(2)$ by $S^1$, but the latter only has one-dimensional irreducible representations, so any induced coaction is isomorphic to the original one.
\end{proof}

\noindent We end this section with the following observation.

\begin{Cor}[of the proof of Theorem \ref{Theo1}] With $\mathbb{G}=SU_q(2)$, the equivariant Picard group $\textrm{Pic}_{\mathbb{G}}(S_{q\tau}^2)$ of the Podle\'{s} spheres is determined as follows.
\begin{itemize} \item $\textrm{Pic}_{\mathbb{G}}(S_{q\infty}^2) \cong \mathbb{Z}$,
\item $\textrm{Pic}_{\mathbb{G}}(S_{q\tau(x)}^2) \cong \mathbb{Z}_2$ for $x\in \mathbb{Z}$,
\item $\textrm{Pic}_{\mathbb{G}}(S_{q\tau(x)}^2)$ is the trivial group in the remaining cases.
\end{itemize}
\end{Cor}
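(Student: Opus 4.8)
The plan is to realise $\textrm{Pic}_{\mathbb{G}}(S_{q\tau}^2)$ as the group (under tensor product over $C(S_{q\tau}^2)$) of isomorphism classes of equivariant self-equivalence bimodules, and to compute it by pushing the analysis behind Theorem~\ref{Theo1} one step further. Two general facts are used throughout: $\textrm{Pic}_{\mathbb{G}}$ is invariant as a group under equivariant Morita equivalence (conjugate a self-equivalence of $\mathbb{X}$ by a fixed equivalence $\mathbb{X}\sim\mathbb{Y}$), and, since the spin $1/2$-representation is generating, Lemma~\ref{LemInd} and the remark after it show that every element of $\textrm{Pic}_{\mathbb{G}}(S_{q\tau(x)}^2)$ is a finite composition of the elementary equivalences coming from minimal invariant projections in $C(S_{q\tau(y)}^2)\otimes M_2(\mathbb{C})$. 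First I would pin these down. By Proposition~\ref{PropEigv} the invariant subalgebra of $C(S_{q\tau(y)}^2)\otimes M_2(\mathbb{C})$ contains the self-adjoint $T^{(2)}_{\tau(y),+}$, which has two distinct eigenvalues; and this invariant subalgebra is two-dimensional, since spin $1$ occurs with multiplicity one in $C(S_{q\tau(y)}^2)$ (while $M_2(\mathbb{C})$ under the adjoint spin $1/2$-action decomposes as spin $0$ plus spin $1$), so it equals $\mathbb{C}p_+\oplus\mathbb{C}p_-$ for the two eigenprojections. Hence the only elementary moves are $\mathcal{E}_y^+\colon S_{q\tau(y)}^2\to S_{q\tau(y+1)}^2$ (the $\tau(y+1)$-eigenspace) and $\mathcal{E}_y^-\colon S_{q\tau(y)}^2\to S_{q\tau(y-1)}^2$; each is a genuine two-sided equivalence by the fullness remark in the proof of Proposition~\ref{LemMor}, and from the explicit eigenvectors of Lemma~\ref{LemEig} one reads off $\mathcal{E}_y^-\cong(\mathcal{E}_{y-1}^+)^{-1}$ and that the equivariant automorphism $\sigma$ conjugates $\mathcal{E}_y^+$ to $\mathcal{E}_{-y}^-$.

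Next I would reduce. A composition of $\mathcal{E}^{\pm}$'s and of the equivariant isomorphisms $\sigma_{\tau(y)}\colon S_{q\tau(y)}^2\to S_{q\tau(-y)}^2$ that begins and ends at $S_{q\tau(x)}^2$ telescopes, using $\mathcal{E}_y^-\cong(\mathcal{E}_{y-1}^+)^{-1}$, either to the identity or to $\nu_x:=\sigma_{\tau(-x)}\circ\mathcal{E}^{(-2x)}$, where $\mathcal{E}^{(m)}$ denotes the length-$|m|$ chain of moves from $S_{q\tau(x)}^2$ to $S_{q\tau(x+m)}^2$; the latter exists as a self-equivalence only when $2x\in\mathbb{Z}$. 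A short manipulation of the conjugation identity for $\sigma$ shows $\nu_x\circ\nu_x\cong\textrm{id}$, and also that the transport isomorphism $\textrm{Pic}_{\mathbb{G}}(S_{q\tau(x)}^2)\cong\textrm{Pic}_{\mathbb{G}}(S_{q\tau(x')}^2)$ afforded by $\mathcal{E}^{(x'-x)}$ (for $x-x'\in\mathbb{Z}$) carries $\nu_x$ to $\nu_{x'}$. Hence $\textrm{Pic}_{\mathbb{G}}(S_{q\tau(x)}^2)$ is trivial when $2x\notin\mathbb{Z}$ and is a quotient of $\mathbb{Z}_2$ when $2x\in\mathbb{Z}$, the class of $\nu_x$ depending only on the Morita orbit. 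It then suffices to decide $\nu_x$ at one point of each orbit meeting $\tfrac12\mathbb{Z}$: at $x=0$ (whose orbit is the integer one and contains the equatorial sphere) and at $x=\tfrac12$ (whose orbit is the half-integer one and does not).

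For $x=0$ one has $\nu_0=[\sigma_0]$, and this class is nontrivial: $\sigma_0$ acts as $-1$ on the $3$-dimensional spin $1$-spectral subspace of $\textrm{Pol}(S_{q0}^2)$, which by Schur's lemma obstructs an equivariant bimodule isomorphism of the $\sigma_0$-twisted module onto the untwisted one (equivalently, any such isomorphism would be implemented by an invariant, hence by ergodicity scalar, unitary, forcing $\sigma_0=\textrm{id}$). Thus $\textrm{Pic}_{\mathbb{G}}(S_{q\tau(x)}^2)\cong\mathbb{Z}_2$ for every $x\in\mathbb{Z}$. For $x=\tfrac12$ I would show instead that $\nu_{1/2}$ is trivial, i.e.\ that the level-lowering bimodule $\mathcal{E}_{1/2}^-\colon S_{q\tau(1/2)}^2\to S_{q\tau(-1/2)}^2$ is isomorphic, as an equivariant bimodule, to the isomorphism $\sigma_{\tau(1/2)}$ itself: on the $\tau(-1/2)$-eigenspace of $T^{(2)}_{\tau(1/2),+}$, spanned by the vectors $\xi^{\tau(-1/2)}_{k,+}$ of Lemma~\ref{LemEig}, a direct computation using Proposition~\ref{PropQuot} and the relation $\pi_{\tau,-}=\pi_{-\tau,+}\circ\sigma$ shows that the left $C(S_{q\tau(-1/2)}^2)$-action precomposed with $\sigma_{\tau(1/2)}$ agrees, after a $C(S_{q\tau(1/2)}^2)$-linear and $\mathbb{G}$-equivariant unitary identification, with the standard bimodule structure. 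Hence $\textrm{Pic}_{\mathbb{G}}(S_{q\tau(x)}^2)$ is trivial for $x\in\tfrac12+\mathbb{Z}$, and also for $x\notin\tfrac12\mathbb{Z}$ by the reduction, so it is $\mathbb{Z}_2$ exactly for $x\in\mathbb{Z}$ and trivial otherwise. The standard sphere is handled separately: as at the end of the proof of Theorem~\ref{Theo1}, $S_{q\infty}^2=S^1\backslash SU_q(2)$ is equivariantly Morita equivalent only to itself, so its self-equivalences are precisely the equivariant line bundles, which are induced from the (one-dimensional) representations of $S^1$ and form $\widehat{S^1}\cong\mathbb{Z}$; thus $\textrm{Pic}_{\mathbb{G}}(S_{q\infty}^2)\cong\mathbb{Z}$.

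The step I expect to be the main obstacle is the $x=\tfrac12$ identification $\mathcal{E}_{1/2}^-\cong\sigma_{\tau(1/2)}$: proving that this particular elementary self-Morita-equivalence is the trivial one requires careful sign bookkeeping relating the two families $\pi_{\tau,\pm}$ of $^*$-representations along the identifications of Lemma~\ref{LemEig}, and it is precisely this computation that produces the integer-versus-half-integer dichotomy in the statement --- heuristically, the integer orbit contains the equatorial sphere with its exceptional involution $\sigma_0$, whereas the half-integer orbit does not. The auxiliary relations invoked in the first two paragraphs --- that $\mathcal{E}_y^{\pm}$ are the only spin $1/2$-moves, that $\mathcal{E}_y^-\cong(\mathcal{E}_{y-1}^+)^{-1}$, and that $\sigma$ conjugates $\mathcal{E}_y^+$ to $\mathcal{E}_{-y}^-$ --- are routine, but they too rest on Lemma~\ref{LemEig} and must be set up with care.
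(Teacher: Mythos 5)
Your overall framework---classify the irreducible equivariant Hilbert modules via Lemma~\ref{LemInd}, note that the fixed-point algebra of $C(S_{q\tau(y)}^2)\otimes M_2(\mathbb{C})$ is exactly the span of the two eigenprojections of $T^{(2)}_{\tau(y),+}$, and reduce every self-equivalence to a power of $\nu_x=\sigma_{\tau(-x)}\circ\mathcal{E}^{(-2x)}$---is sound, and is in fact more explicit than the paper's own argument, which for $x\notin\mathbb{Z}$ just invokes ``$S_{q\tau(x)}^2\cong S_{q\tau(y)}^2$ iff $x=\pm y$'' and the absence of outer automorphisms. The genuine gap is precisely the step you single out as the main obstacle: the claimed isomorphism $\mathcal{E}^-_{1/2}\cong\sigma_{\tau(1/2)}$ cannot hold, for a reason no amount of sign bookkeeping will circumvent. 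An isomorphism of equivariant equivalence bimodules is in particular an isomorphism of the underlying right comodule Hilbert modules, and these have different spectral decompositions: the bimodule implementing the automorphism $\sigma_{\tau(1/2)}$ is $C(S_{q\tau(1/2)}^2)$ with a twisted left action, so it contains each integer spin once and in particular has an invariant vector (the image of $1$); whereas $\mathcal{E}^-_{1/2}=p\,(C(S_{q\tau(1/2)}^2)\otimes\mathbb{C}^2)$ sits inside $C(S_{q\tau(1/2)}^2)\otimes\mathbb{C}^2\cong\bigoplus_{n\geq 0}\pi_n\otimes\pi_{1/2}$, which contains only half-integer spins and hence no invariant vector at all. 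So $\nu_{1/2}$ is a genuinely non-trivial Picard class.

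Since your reduction correctly shows that every class is a power of $\nu_x$ and that $\nu_x^2\cong\mathrm{id}$ (via $\overline{\mathcal{E}}\otimes\mathcal{E}\cong C(\mathbb{X})$ and the fact that $\sigma$ carries the chain $x\to -x$ to the chain $-x\to x$), the computation actually forces $\textrm{Pic}_{\mathbb{G}}(S_{q\tau(x)}^2)\cong\mathbb{Z}_2$ for \emph{every} $x\in\frac12\mathbb{Z}$, not only for $x\in\mathbb{Z}$: for $x\in\frac12+\mathbb{Z}$ the chain $\mathcal{E}^{(-2x)}$ from $S_{q\tau(x)}^2$ to $S_{q\tau(-x)}^2$, composed with $\sigma$, is an invertible self-bimodule with no invariant vectors, hence not isomorphic to the trivial one. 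This is consistent with the count $|\textrm{Pic}_{\mathbb{G}}|=|\textrm{Out}_{\mathbb{G}}|\cdot\#\{y\in x+\mathbb{Z}\,:\,S_{q\tau(y)}^2\cong S_{q\tau(x)}^2\}$ and with Morita invariance of the Picard group along the orbit $\frac12+\mathbb{Z}$. In other words, the third bullet of the statement appears to be incorrect for half-integer $x$ (the paper's one-line treatment of $x\notin\mathbb{Z}$ overlooks the module $\mathcal{E}_{-x}$ when $-x\in x+\mathbb{Z}$), and the honest output of your otherwise correct argument is: $\mathbb{Z}$ for the standard sphere, $\mathbb{Z}_2$ for $x\in\frac12\mathbb{Z}$, and trivial otherwise. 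Your treatment of $S_{q\infty}^2$, of $x\in\mathbb{Z}$, and of $x\notin\frac12\mathbb{Z}$ is fine and agrees with the paper; rather than forcing the $x=\frac12$ identification through, you should flag the discrepancy.
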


\noindent By the equivariant Picard group for a $\mathbb{G}$-homogeneous space $\mathbb{X}$, we mean the equivalence classes of equivariant equivalence $C(\mathbb{X})$-Hilbert bimodules, with composition given by the balanced $C(\mathbb{X})$-product.

\begin{proof} For $S_{q\infty}^2$ the result follows as the equivalence classes of irreducible imprimitivity Hilbert modules are labeled by $\mathbb{Z}= \textrm{Irrep}(S^1)$, and $\textrm{Pol}(S_{q\infty}^2)$ has no outer automorphisms. It is easily verified that the resulting group structure is also $\mathbb{Z}$. \\

\noindent For $S_{q\tau(x)}^2$ with $x\notin \mathbb{Z}$, we have computed that any irreducible imprimitivity Hilbert module has some $\textrm{Pol}(S_{q\tau(y)}^2)$ as its endomorphism algebra, where $y \in x+\mathbb{Z}$. As $S_{q\tau(x)} \cong S_{q\tau(y)}$ equivariantly iff $x = \pm y$, and as $S_{q\tau(x)}^2$ has no outer automorphisms, the result for this case also follows.\\

\noindent Finally, for $S_{q0}^2$, the first part of the previous argument still applies, but now $\textrm{Out}_{\mathbb{G}}(\textrm{Pol}(S_{q0}^2)) = \mathbb{Z}_2$. Hence $\textrm{Pic}_{\mathbb{G}}(S_{q0}^2) \cong \mathbb{Z}_2$. As the $S_{q\tau(x)}$ with $x\in \mathbb{Z}$ are $\mathbb{G}$-Morita equivalent with $S_{q0}^2$, the result follows also for these cases. (In fact, observe that the $\textrm{Pol}(S_{q\tau(2l)}^2)$-linear span of the $A_s$ inside the $^*$-algebra $B_l$ of Theorem \ref{Theo2} give a concrete equivariant equivalence (pre-)Hilbert C$^*$-bimodule between $\textrm{Pol}(S_{q\tau(2l)})$ and $\textrm{Pol}(S_{q\tau(-2l)})\cong \textrm{Pol}(S_{q\tau(2l)})$.)
\end{proof}

\noindent \emph{Remark:} A similar distinction between the equilateral Podle\'{s} spheres (i.e.~ $\tau\neq 0$ or $\infty$), and the only further one that we are aware of, appears in Proposition 9 of \cite{Hec1}, where the spectral decomposition of a certain subspace of the restricted duals of the Podle\'{s} spheres is computed. However, now the set of exceptional cases is slightly larger, as the are given by the $\tau(x)$ with $x\in \frac{1}{2}\mathbb{N}$. We have not examined in detail whether there is any direct connection with the above result.

\section{Equivariant Morita equivalence for the quantum projective plane}

\noindent We will first show that the module $^*$-algebra $B_l$ of Theorem \ref{Theo2} is well-defined.\\

\noindent We will index the elements $b\in \{X,Y,Z\}\subseteq B_{l}$ with $2l$ (dropping the $\tau$-symbol w.r.t. ~ previous notation), and denote the $A_s$ as $A_{2l}^{(s)}$. However, when the indices are not crucial in a computation, we will drop them.\\

\noindent We will also use the following orthogonal basis for the pre-Hilbert space $V\oplus V$, where $V = \mathbb{C}\lbrack \mathbb{N}\rbrack$: for $k \in \mathbb{N}-2l = \{-2l,-2l+1,\ldots\}$, we denote by $e_{k,+}^{(l)}$ the vector $0\oplus e_{k+2l}$, and for $k\in \mathbb{N}$, we denote by $e_{k,-}^{(l)}$ the vector $e_{k}\oplus 0$.\\

\noindent We want to build now on $V\oplus V$ a bounded $^*$-representation of $B_{l}$. Namely, we let the generators of $B_l$ correspond to the following banded operators: \begin{eqnarray*} X_{2l} e_{k,\pm}^{(l)} &=& \pm(1 \pm q^{2k})^{1/2}(1 \mp q^{2k+4l})^{1/2}e_{k-1,\pm}^{(l)},\\ Z_{2l} e_{k,\pm}^{(l)} &=& \pm q^{2k+2l+1} e_{k,\pm}^{(l)}\\ X_{2l}^*e_{k,\pm}^{(l)} &=& \pm (1\pm q^{2k+2})^{1/2}(1\mp q^{2k+4l+2})^{1/2} e_{k+1,\pm}^{(l)} \\ A_{2l}^{(s)} e_{k,\pm}^{(l)} &=& (\pm 1)^s (\pm q^{2k+2s+2};q^2)_{2l-s}^{1/2} (\mp q^{2k+2};q^2)_{2l+s}^{1/2} e_{k+s,\mp}^{(l)}\end{eqnarray*} It is an easy task to check that the commutation relations in Theorem \ref{Theo2} are satisfied for these operators. If we restrict to $\textrm{Pol}(S_{q\tau(2l)}^2) \subseteq B_l$, we see that we get the natural representation $\pi_{\tau(2l)}$.\\

\begin{Lem}\label{LemFaith} The above representation is faithful.\end{Lem}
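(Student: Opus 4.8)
The plan is to exhibit the faithfulness by restricting to the already-known subalgebras and then controlling the remaining generators. First I would observe that the $^*$-subalgebra of $B_l$ generated by $X_{2l}, Z_{2l}, Y_{2l}$ is $\textrm{Pol}(S_{q\tau(2l)}^2)$, and that the stated representation restricts on it to $\pi_{\tau(2l)} = \pi_{\tau(2l),-}\oplus \pi_{\tau(2l),+}$, which is \emph{faithful} on $\textrm{Pol}(S_{q\tau(2l)}^2)$ by the classification of $^*$-representations (Proposition~\ref{PropReps}): indeed $Z_{2l}$ acts with distinct nonzero positive eigenvalues on each summand, so no one-dimensional or proper-sub\-representation component can occur, and the two summands are the two inequivalent faithful irreducibles $\pi_{\pm}$. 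Thus the kernel $N$ of the representation of $B_l$ intersects $\textrm{Pol}(S_{q\tau(2l)}^2)$ trivially.

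Next I would use the structure of $B_l$ as a module over $\textrm{Pol}(S_{q\tau(2l)}^2)$ via the elements $A_s = A_{2l}^{(s)}$. From the relations one reads off that every element of $B_l$ can be written as a finite sum $\sum_{s} c_s A_s$ with $c_s \in \textrm{Pol}^{\textrm{ext}}(S_{q\tau(2l)}^2)$ (using that $X^{-1}$ and $(X^*)^{-1}$ appear only through the product relations $A_sA_{s'}$, which express these products back inside $\textrm{Pol}^{\textrm{ext}}$); more precisely $B_l = \bigoplus_{s=-2l}^{2l} \textrm{Pol}(S_{q\tau(2l)}^2)\, A_s$ as a left module, the directness coming from the fact that $A_s$ shifts the $\pm$-grading or the $\mathbb{Z}$-degree in a way that separates the summands. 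Concretely, in the representation, $A_s$ maps $e_{k,+}^{(l)} \mapsto (\ast)\, e_{k+s,-}^{(l)}$ and $e_{k,-}^{(l)}\mapsto (\ast)\, e_{k+s,+}^{(l)}$ with nonzero coefficients $(\ast)$ for all admissible $k$ (the $q$-Pochhammer factors $(\pm q^{2k+2s+2};q^2)_{2l-s}^{1/2}(\mp q^{2k+2};q^2)_{2l+s}^{1/2}$ are strictly positive since $0<q<1$), so each $A_s$ acts as an injective weighted shift between the two copies of $V$.

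Then faithfulness follows by a degree/grading argument: take $0\neq a = \sum_s c_s A_s \in B_l$ with the $c_s\in\textrm{Pol}(S_{q\tau(2l)}^2)$ not all zero; applying the representation and evaluating on a basis vector $e_{k,\pm}^{(l)}$ for $k$ large, the images $A_s e_{k,\pm}^{(l)}$ land in pairwise distinct basis vectors $e_{k+s,\mp}^{(l)}$ as $s$ ranges, so the $\pi_{\tau(2l)}$-images of the $c_s$ are reconstructed componentwise from $\pi(a)$; since $\pi_{\tau(2l)}$ is faithful on $\textrm{Pol}(S_{q\tau(2l)}^2)$, at least one $\pi(c_s)\neq 0$ and hence $\pi(a)\neq 0$. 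Equivalently one can phrase this via the $\mathbb{Z}_2$-gradation on $B_l$ (degree $0$ part $\textrm{Pol}(S_{q\tau(2l)}^2)$, degree $1$ part $\bigoplus_s \textrm{Pol}\,A_s$) together with the $\mathbb{Z}$-grading counting powers of $X$: the representation is graded, it is faithful on the degree-$0$ part, and the off-diagonal (degree $1$) block is injective because each $A_s$ is realized as an injective shift — so the kernel, being a graded ideal, must be zero. The main obstacle is the bookkeeping in the second step: verifying cleanly that $B_l = \bigoplus_{s=-2l}^{2l}\textrm{Pol}(S_{q\tau(2l)}^2)A_s$ with this decomposition respected by the representation requires carefully using the relations $XA_s = -A_{s-1}(1+q^{2s+2l-1}Z)$, $X^*A_s = -A_{s+1}(1-q^{2s-2l+1}Z)$ and the two product formulas to normal-order an arbitrary word into this form, and checking that the $\textrm{Pol}^{\textrm{ext}}$-coefficients arising actually lie in $\textrm{Pol}$ (i.e.\ the negative powers of $X$ always cancel); once that normal form is established, the injectivity of the shifts makes the rest immediate.
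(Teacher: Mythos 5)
Your argument follows essentially the same route as the paper: normal-order an arbitrary element of $B_l$ into its degree-zero part plus a $\textrm{Pol}(S_{q\tau(2l)}^2)$-combination of the $A_s$, separate the pieces by the $\mathbb{Z}\times\mathbb{Z}_2$-grading coming from conjugation by $Z$ together with the $\pm$-block structure, and then check that the surviving homogeneous monomials act as nonzero operators. One intermediate claim is wrong, however: the coefficients $(\pm q^{2k+2s+2};q^2)_{2l-s}^{1/2}(\mp q^{2k+2};q^2)_{2l+s}^{1/2}$ are \emph{not} all strictly positive, since a factor $1-q^{2m}$ vanishes when $m=0$; indeed they must vanish precisely when the target vector $e_{k+s,\mp}^{(l)}$ is out of range (for instance $A_0\,e_{-2l,+}^{(l)}=0$), so $A_s$ is not an injective shift. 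This does not sink the proof: for the grading argument you only need each monomial $A_sX^mP(Z)$ to be a nonzero \emph{operator}, which follows from nonvanishing of the coefficients for $k$ large, exactly as you yourself invoke — but the phrase ``the off-diagonal block is injective because each $A_s$ is realized as an injective shift'' should be replaced by that weaker statement. Relatedly, reconstructing the $c_s$ ``componentwise'' from $\pi(a)e_{k,\pm}^{(l)}$ is not immediate as stated, because $c_s$ contains powers of $X$ and so does not preserve the line $\mathbb{C}e_{k+s,\mp}^{(l)}$; what makes the separation honest is the $\textrm{Ad}(Z)$-grading (with $Z$ having simple spectrum on $V\oplus V$), which is exactly what your final reformulation, and the paper's proof, actually use.
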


\begin{proof}
\noindent Let us formally write $X^{-1}$ for $X^*$. Then the commutation relations, together with their adjoints, clearly allow to write any element of $B_l$ as a linear combination of elements of the form \begin{itemize} \item $X^mZ^n$ with $m\in \mathbb{Z}, n\in \mathbb{N}$,\item $A_sZ^n$ with $s\in \{-2l+1,-2l+1,\ldots, 2l-1\}$ and $n\in \mathbb{N}$, \item $A_{-2l}X^mZ^n$ with $m,n\in \mathbb{N}$, and \item $A_{2l}(X^*)^{m}Z^n$ with $m,n\in \mathbb{N}$.\end{itemize} We will now show that the representations of these monomials are linearly independent. We will in the following already use the same notation for these operator algebraic implementations. Note that in any case none of the above monomials are zero operators.\\

\noindent From looking at the natural $\mathbb{Z}\times \mathbb{Z}_2$-gradation on $B_l$ by the adjoint action of $Z$, we see immediately that the above families are linearly independent amongst each other, and that inside each family we can only have linear dependencies of the form $A_{s}X^m P(Z)=0$ for some non-zero (Laurent) polynomial $P$ in $Z$. But it is clear that these do not occur.

\end{proof}

\noindent Using the notation from Notation \ref{NotCom}, consider $Pol_a^{\textrm{ext}}(S_{q\tau(2l)}^2) \subseteq L(V\oplus V)$, which induces a module $^*$-algebra structure on the $^*$-algebra of banded operators on $V\oplus V$ by Lemma \ref{LemMod}.1.

\begin{Prop} The above module $^*$-algebra structure restricts to $B_l$, and coincides with the one described in Theorem \ref{Theo2}.\end{Prop}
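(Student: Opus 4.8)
The plan is to make the module structure explicit at the level of banded operators and then verify everything generator by generator. Write $\pi_B\colon U_q(-,+)\to B$ for the unital $^*$-homomorphism $b\mapsto\mathbf{b}_{\tau(2l)}$ into the $^*$-algebra $B$ of banded operators on $V\oplus V$, so that by Lemma \ref{LemMod}.1 the module structure in question is $T\lhd h=\pi_B(S(h_{[1]}))\,T\,\pi_B(h_{[2]})$. Using the identification $U_q(-,+)\cong U_q(su(2))$ of Section \ref{QUEA} (under which $\alpha$ becomes $\Delta$), the element $S(h_{[1]})\odot h_{[2]}\in U_q(-,+)\odot U_q(-,+)$ is the transport of $(S\otimes\iota)\Delta(h)$, and a short computation (consistent with the Miyashita--Ulbrich formulas recorded in Section \ref{QUEA}) yields $S(K_{[1]})\odot K_{[2]}=Z\odot Z^{-1}$, $S((q^{-1/2}\lambda^{-1}E)_{[1]})\odot(q^{-1/2}\lambda^{-1}E)_{[2]}=Z^{-1}\odot X-Z^{-1}X\odot 1$ and $S((q^{3/2}\lambda^{-1}F)_{[1]})\odot(q^{3/2}\lambda^{-1}F)_{[2]}=1\odot YZ^{-1}-Y\odot Z^{-1}$. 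Applying $\pi_B$, this gives for \emph{all} $T\in B$
\[
T\lhd K=\mathbf{Z}\,T\,\mathbf{Z}^{-1},\qquad T\lhd(q^{-1/2}\lambda^{-1}E)=\mathbf{Z}^{-1}[T,\mathbf{X}],\qquad T\lhd(q^{3/2}\lambda^{-1}F)=[T,\mathbf{Y}]\,\mathbf{Z}^{-1},
\]
with $\mathbf{X}=\mathbf{X}_{\tau(2l)}$, $\mathbf{Y}=\mathbf{Y}_{\tau(2l)}$, $\mathbf{Z}=\mathbf{Z}_{\tau(2l)}$. Since $U_q(su(2))$ is generated as an algebra by $K^{\pm1},E,F$ and $B_l$ as a $^*$-algebra by $X_{2l},Z_{2l}$ and the $A_{2l}^{(s)}$, the module-algebra identities $(ab)\lhd h=(a\lhd h_{(1)})(b\lhd h_{(2)})$ together with the displayed coproducts reduce the invariance of $B_l$ under $\lhd$ to the assertion that $g\lhd K^{\pm1}$, $g\lhd E$ and $g\lhd F$ lie in $B_l$ for each of these generators $g$.

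The computation is organised by the self-adjoint involution $\gamma=(-\textrm{id})\oplus\textrm{id}$ on $V\oplus V$: one reads off on the basis $e_{k,\pm}^{(l)}$ that $\mathbf{Z}$ is the \emph{positive} diagonal operator $e_{k,\pm}^{(l)}\mapsto q^{2k+2l+1}e_{k,\pm}^{(l)}$, that $\mathbf{Z}=Z_{2l}\gamma$, $\mathbf{X}=X_{2l}\gamma$, $\mathbf{Y}=Y_{2l}\gamma$, and that $\gamma$ commutes with $X_{2l},Z_{2l},Y_{2l}$ while anticommuting with every $A_{2l}^{(s)}$. For $g\in\{X_{2l},Z_{2l},Y_{2l}\}$ the $\gamma$'s cancel, so the three operators reduce to conjugation and commutators with $Z_{2l},X_{2l},Y_{2l}$, i.e.\ to the usual inner implementation of the $U_q(su(2))$-action on $\textrm{Pol}^{\textrm{ext}}(S_{q\tau(2l)}^2)$ from Proposition \ref{PropQuot}; hence $g\lhd K^{\pm1},g\lhd E,g\lhd F\in\textrm{Pol}(S_{q\tau(2l)}^2)\subseteq B_l$, and the restriction of $\lhd$ to $\textrm{Alg}(X_{2l},Z_{2l},Y_{2l})$ is exactly the usual Podle\'s action. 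For $g=A_{2l}^{(s)}$ one gets immediately $A_{2l}^{(s)}\lhd K^{\pm1}=q^{\pm2s}A_{2l}^{(s)}$, while in the $E$- and $F$-formulas, pushing $\gamma$ through converts $[A_{2l}^{(s)},\mathbf{X}]$ (resp. $[A_{2l}^{(s)},\mathbf{Y}]$) into a $\gamma$-conjugate of the \emph{anticommutator} $A_{2l}^{(s)}X_{2l}+X_{2l}A_{2l}^{(s)}$ (resp. with $Y_{2l}$). Feeding in the defining relations that express $X_{2l}A_{2l}^{(s)}$ and $A_{2l}^{(s)}X_{2l}$ as $A_{2l}^{(s-1)}$ times a linear polynomial in $Z_{2l}$, and using $A_{2l}^{(s)}Z_{2l}=-q^{-2s}Z_{2l}A_{2l}^{(s)}$, this anticommutator collapses to a scalar multiple of $Z_{2l}A_{2l}^{(s-1)}$; dividing by $\mathbf{Z}$ then gives $A_{2l}^{(s)}\lhd E\in\mathbb{C}\,A_{2l}^{(s-1)}$ and symmetrically $A_{2l}^{(s)}\lhd F\in\mathbb{C}\,A_{2l}^{(s+1)}$, the scalar vanishing at the boundary values $s=\mp2l$ in accordance with the special relations $X_{2l}A_{2l}^{(-2l)}=-A_{2l}^{(-2l)}X_{2l}$, $X_{2l}^*A_{2l}^{(2l)}=-A_{2l}^{(2l)}X_{2l}^*$ and the non-existence of $A_{2l}^{(\pm(2l+1))}$. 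This shows that $\lhd$ preserves $B_l$, and as $B_l$ is a $^*$-subalgebra of $B$ the restriction is a right $U_q(su(2))$-module $^*$-algebra structure.

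It then remains to identify this structure with the one of Theorem \ref{Theo2}. Agreement with the usual action on $\textrm{Alg}(X_{2l},Z_{2l},Y_{2l})$ was obtained above. The weights $A_{2l}^{(s)}\lhd K=q^{2s}A_{2l}^{(s)}$ and the ladder formulas for $\lhd E$ and $\lhd F$, truncating at $s=\pm2l$, exhibit the span of the $A_{2l}^{(s)}$ as a copy of the spin $2l$-representation; comparing the scalars just obtained with the standard matrix coefficients of $\pi_{2l}$ forces exactly the normalisation $q^{\frac{1}{2}s(s-1)}(q^{4l-2s+2};q^2)_{s+2l}^{1/2}(q^{2};q^2)_{s+2l}^{-1/2}$, i.e.\ $\theta_{2l}$ is a $\pi_{2l}$-eigenvector. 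Finally the structure satisfies the hypotheses of Proposition \ref{PropEquiv}: all eigenvalues of $K$ are positive because $\mathbf{Z}$ is positive, $B_l=(B_l)_{\textrm{fin}}$ since $B_l$ is generated by elements lying in finite-dimensional subrepresentations, and the bounded faithful representation of $B_l$ on $l^2(\mathbb{N})\oplus l^2(\mathbb{N})$ (Lemma \ref{LemFaith}) provides the required faithful unital $^*$-homomorphism into a C$^*$-algebra; ergodicity follows by restricting attention to the even part (where the action is the usual ergodic Podle\'s action) and to the odd ($A$-)part (controlled by its weight structure). Hence $B_l$ carries the action of $SU_q(2)$ of Theorem \ref{Theo2}.

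The step I expect to be the main obstacle is the $E$/$F$-part of the computation for $g=A_{2l}^{(s)}$: one has to track the sign twist $\mathbf{X}=X_{2l}\gamma$ carefully (so that commutators genuinely become $\gamma$-twisted anticommutators), carry out the $q$-Pochhammer bookkeeping reducing $A_{2l}^{(s)}X_{2l}+X_{2l}A_{2l}^{(s)}$ to a scalar times $Z_{2l}A_{2l}^{(s-1)}$, and check that the resulting scalars behave correctly at the boundary values $s=\pm2l$. Once this is done, matching the scalars with the coefficients of $\theta_{2l}$ and with the spin $2l$-matrix coefficients is routine.
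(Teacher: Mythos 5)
Your proposal is correct and follows essentially the same route as the paper: it transports the inner formulas $T\lhd K=\mathbf{Z}T\mathbf{Z}^{-1}$, $T\lhd(q^{-1/2}\lambda^{-1}E)=\mathbf{Z}^{-1}[T,\mathbf{X}]$, $T\lhd(q^{3/2}\lambda^{-1}F)=[T,\mathbf{Y}]\mathbf{Z}^{-1}$ through Lemma \ref{LemMod}.1, uses the sign operator (your $\gamma$, the paper's $e$) to turn commutators with $\mathbf{X},\mathbf{Y}$ into anticommutators with $X_{2l},Y_{2l}$ on the odd part, and then collapses these via the defining relations to exhibit the $A_{2l}^{(s)}$ as a weight ladder for the spin $2l$-representation, with the boundary cases $s=\mp2l$ giving the highest/lowest weight vectors. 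The only part you leave implicit --- the explicit evaluation of the ladder scalars confirming that the stated normalisation $\lambda_s$ makes $\theta_{2l}$ a $\pi_{2l}$-eigenvector --- is exactly the computation the paper carries out, and your outline of it is accurate.
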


\begin{proof} It is clear that the above module $^*$-algebra structure restricts to $\textrm{Pol}(S_{q\tau(2l)}^2)\subseteq B_l$, and coincides with the usual one. We are therefore left to show that it behaves in the right way on the operators $A_s$.\\

\noindent Let us write $e$ for the sign operator \[e:V\oplus V\rightarrow V\oplus V: e_{k,\pm}^{(l)} \rightarrow \pm e_{k,\pm}^{(l)},\] so that $\mathbf{b}_{2l} = eb_{2l}$ for $b\in \{X,Z,Y\}$. Denote $\theta_{2l}^{(s)} = \lambda_s A_{s}$ where \[\lambda_s=q^{\frac{1}{2}s(s-1)} \frac{(q^{4l-2s+2};q^2)_{s+2l}^{1/2}}{(q^{2};q^2)_{s+2l}^{1/2}}.\] We then have \begin{eqnarray*} \theta_{2l}^{(s)} \lhd K &=& \mathbf{Z} \theta_{2l}^{(s)} \mathbf{Z}^{-1}\\ &=& eZ \theta_{2l}^{(s)} Z^{-1}e \\ &=& - q^{2s} e \theta_{2l}^{(s)} e \\ &=& q^{2s} \theta_{2l}^{(s)}.\end{eqnarray*} This small computation already makes it clear why we can not use the naive map of $U_q(-,+)$ into $\textrm{Pol}(S_{q\tau(2l)}^2)$ to define the module $^*$-algebra structure.\\

\noindent Similarly, we compute \begin{eqnarray*} \theta_{2l}^{(-2l)} \lhd (q^{-1/2}\lambda^{-1}E) &=& Z^{-1}e\lbrack \theta_{2l}^{(-2l)},eX\rbrack \\ &=& Z^{-1}e(\theta_{2l}^{(-2l)}Xe-eX\theta_{2l}^{(-2l)}) \\ &=& -Z^{-1}(\theta_{2l}^{(-2l)} X + X\theta_{2l}^{(-2l)}) \\ &=& -Z^{-1}(\theta_{2l}^{(-2l)}X-\theta_{2l}^{(-2l)}X) \\ &=& 0,\end{eqnarray*} showing that $\theta_{2l}^{(-2l)}$ is a highest weight vector for the spin $2l$-representation.\\

\noindent For $s>-2l$, we have \[ X\theta_{2l}^{(s)} = -\frac{\lambda_s}{\lambda_{s-1}}(1-q^{2l+1}Z) \theta_{2l}^{(s-1)},\] and, by taking the adjoint of the commutation relations for the $X^*$, we also have \[\theta_{2l}^{(s)}X = \frac{\lambda_s}{\lambda_{s-1}}(1-q^{-2s-2l+1}Z)\theta_{2l}^{(s-1)}.\] So then we find \[\theta_{2l}^{(s)} \lhd (q^{-1/2}\lambda^{-1}E)  = \frac{\lambda_s}{\lambda_{s-1}} (q^{-2s-2l+1}-q^{2l+1})\theta_{2l}^{(s-1)}.\] Simplifying, this becomes \[\theta_{2l}^{(s)} \lhd E = q^{-s-2l+\frac{1}{2}}\lambda(1-q^{4l+2s})^{1/2}(1-q^{4l-2s+2})^{1/2} \theta_{2l}^{(s-1)}.\] Carrying out a similar calculation for $F$, or using the compatibility between the module structure and the $^*$-operation, we also find \[\theta_{2l}^{(s)} \lhd F = q^{s-2l-\frac{1}{2}}\lambda(1-q^{4l+2s+2})^{1/2}(1-q^{4l-2s})^{1/2} \theta_{2l}^{(s+1)}\] for $s<2l$, and $\theta_{2l}^{(2l)} \lhd F= 0$. In all, we find that the action of $U_q(su(2))$ on the elements $\theta_{2l}^{(s)}$ indeed gives a (right) presentation of the spin $2l$-representation.
\end{proof}

\noindent We now show that the action on $B_l$ is ergodic.

\begin{Lem}\label{LemInvs} The module $^*$-algebra $B_l$ has only the scalar multiples of the unit as its invariant elements.\end{Lem}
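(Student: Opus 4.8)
The strategy is to decompose $B_l$ into its $U_q(su(2))$-isotypic components and show that the trivial (spin $0$) component is one-dimensional. By construction, $B_l$ is spanned, as a vector space, by the monomials listed in the proof of Lemma~\ref{LemFaith}, and these carry a natural $\mathbb{Z}\times\mathbb{Z}_2$-gradation coming from the adjoint action of $Z$ (equivalently, from the action of $K$, which acts by $q$-powers on each $\theta_{2l}^{(s)}$ and on $X$, $Z$). An invariant element is in particular fixed by $K$, hence lies in the part on which $K$ acts trivially; from the explicit $K$-weights ($K$ acts as $q^{2s}$ on $\theta_{2l}^{(s)}$, and with the usual weights on powers of $X$ and $Z$) one reads off that the $K$-invariant subspace is spanned by the $\textrm{Pol}(S_{q\tau(2l)}^2)$-part that is already $K$-invariant — i.e. the Laurent polynomials in $Z$ of the appropriate degree — together with finitely many monomials involving the $A_s$. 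The point is that the $A_s$ carry a half-integer-versus-integer parity under $\mathbb{Z}_2$ that is opposite to the one of $\textrm{Pol}(S_{q\tau(2l)}^2)$, so the $A_s$-monomials cannot contribute to an invariant element unless they already cancel among themselves, which the gradation forbids.

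\textbf{Key steps, in order.} First I would invoke the known ergodicity of the $U_q(su(2))$-action on $\textrm{Pol}(S_{q\tau(2l)}^2)$ (Definition~\ref{DefPod}): the only invariants inside the subalgebra $\textrm{Alg}(X,Z,Y)\subseteq B_l$ are scalars. Second, using the monomial basis from Lemma~\ref{LemFaith} and the $\mathbb{Z}\times\mathbb{Z}_2$-gradation, reduce to showing that no nonzero linear combination of the $A_s$-type monomials ($A_sZ^n$, $A_{-2l}X^mZ^n$, $A_{2l}(X^*)^mZ^n$) can occur in an invariant element. For this I would act with $E$ (or $F$) on a putative invariant element and use the explicit formulas $\theta_{2l}^{(s)}\lhd E = (\text{nonzero scalar})\,\theta_{2l}^{(s-1)}$ derived in the preceding Proposition, together with the Leibniz rule for the module structure, to propagate constraints down the string of $A_s$'s. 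Concretely, writing a candidate invariant as $\sum_s A_s\, f_s(X,Z) + (\text{scalar})$ with $f_s\in\textrm{Pol}^{\textrm{ext}}(S_{q\tau(2l)}^2)$, applying $\lhd E$ and $\lhd F$ forces the highest and lowest terms $A_{2l}$, $A_{-2l}$ to have coefficients that are killed by the whole of $U_q(su(2))$, hence (by ergodicity of the sphere and a degree count) zero; then induct inward on $|s|$. Third, conclude that the invariant is a scalar invariant of $\textrm{Pol}(S_{q\tau(2l)}^2)$, hence lies in $\mathbb{C}1$.

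\textbf{Main obstacle.} The bookkeeping in the second step is the delicate part: one must track the action of $E$, $F$ on a \emph{product} $A_s f_s$ via the Leibniz formula $(ab)\lhd h = (a\lhd h_{(1)})(b\lhd h_{(2)})$ and keep the weight decomposition straight, because the $A_s$ themselves are not invariant and their $E$/$F$-shifts interact with the $E$/$F$-action on the $f_s$. The cleanest route is probably to observe that $B_l$ decomposes as a $U_q(su(2))$-module into $\textrm{Pol}(S_{q\tau(2l)}^2)$ (the integer-spin part, already known to contain only scalar invariants) direct sum the span of the spin-$2l$ string $\{\theta_{2l}^{(s)}\}$ tensored-up by $\textrm{Pol}(S_{q\tau(2l)}^2)$-coefficients — i.e. as a module it is $\textrm{Pol}(S_{q\tau(2l)}^2)\oplus\big(\textrm{Pol}(S_{q\tau(2l)}^2)\otimes\pi_{2l}\big)$ in some suitable sense — and then use that $\textrm{Pol}(S_{q\tau(2l)}^2)\otimes\pi_{2l}$ contains the trivial representation only if $\textrm{Pol}(S_{q\tau(2l)}^2)$ contains $\pi_{2l}$ with enough multiplicity; but parity ($\mathbb{Z}_2$-grading: the $A_s$-part sits in the odd component) immediately rules out any invariant vector there. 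I expect the whole argument, once the $\mathbb{Z}_2$-parity obstruction is made explicit, to collapse to: an invariant element is even (parity $0$), hence lies in $\textrm{Pol}(S_{q\tau(2l)}^2)$, which is ergodic, so it is scalar.
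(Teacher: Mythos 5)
Your opening reduction is sound and matches the paper's: use the monomial basis from Lemma~\ref{LemFaith} and the compatibility of the gradation with the action to split a putative invariant as $b_1+b_2$ with $b_1\in\textrm{Pol}(S_{q\tau(2l)}^2)$ (killed to a scalar by ergodicity of the sphere) and $b_2$ in the $A_s$-part; and indeed $K$-invariance forces $b_2$ to be a combination of the $A_0Z^n$ only, since $\theta_{2l}^{(s)}\lhd K=q^{2s}\theta_{2l}^{(s)}$ and the monomials $A_{\mp 2l}X^{m}Z^n$, $A_{\mp 2l}(X^*)^mZ^n$ all have nontrivial $K$-weight for $l>0$. The genuine gap is in how you dispose of $b_2$: your final answer is that "parity immediately rules out any invariant vector" in the odd component, so that the whole proof "collapses to" an even/odd argument. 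This is false. The $\mathbb{Z}_2$-gradation being $U_q(su(2))$-compatible only says the odd part is a subcomodule; it does not prevent the trivial representation from occurring there. The spin of the string $\{\theta_{2l}^{(s)}\}$ is $2l$, an \emph{integer}, so there is no half-integer-spin obstruction, and in fact $\pi_{2l}$ occurs in $\textrm{Pol}(S_{q\tau(2l)}^2)$, so your own multiplicity heuristic would predict a candidate invariant in the odd part rather than exclude one. The decisive counterexample is $l=0$: there the paper notes explicitly that $A_0$ itself \emph{is} invariant, so $B_0$ is not ergodic — a parity argument that "immediately" kills odd invariants would wrongly prove ergodicity of $B_0$ as well.

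What is actually needed (and what the paper does) is the computation you sketch in your second step but then abandon: write $b_2=A_0P(Z)$ and impose invariance under the adjoint action of $E$. This yields the functional equation
\[(1-q^{-2l-1}Z)P(-q^{-2}Z)=(1+q^{2l-1}Z)P(Z),\]
which has no nonzero polynomial solution when $l>0$ (compare degrees and the constant term). No induction over $s$ and no Leibniz bookkeeping for general products $A_sf_s$ is required, because the $K$-weight already eliminates everything except $A_0P(Z)$. So: keep your first two reductions, carry out the single $E$-computation instead of invoking parity, and the proof closes.
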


\begin{proof} Using the arguments concerning the basis constructed in Lemma \ref{LemFaith}, we see that an invariant element $b$ can be written as $b_1+b_2$ with $b_1 \in \textrm{Pol}(S_{q\tau(2l)}^2)$ and $b_2$ a linear combination of elements of the form $A_{0}Z^n$ with $n\in \mathbb{N}$. As the natural grading on $B_l$ is $U_q(su(2))$-compatible, both $b_1$ and $b_2$ have to be invariant. But the action on a Podle\'{s} sphere is ergodic, so $b_1$ reduces to a scalar. On the other hand, set $b_2 = A_0 P(Z)$ with $P(Z)$ a polynomial in $Z$. Then the invariance of $b_2$ under the adjoint action of $E$ leads to the following functional equation for $P$: \[(1-q^{-2l-1}Z)P(-q^{-2}Z) = (1+q^{2l-1}Z)P(Z).\] It is clear that the only solution is $P=0$.
\end{proof}

\noindent We can thus apply Proposition \ref{PropEquiv} to find that $B_l$ has a well-defined action by $SU_q(2)$. This finishes the existence part of the $SU_q(2)$-action proposed in Theorem \ref{Theo2}.\\

\noindent It is also easy to provide the invariant functional on $B_l$.\\

\begin{Prop} Let $\varphi_{l}$ be the normal positive functional on $B(l^2(\mathbb{N})\oplus l^2(\mathbb{N}))$ which has $\mathbf{Z}$ as its associated trace class operator. Then $\varphi_l$ is $U_q(su(2))$-invariant on $B_l$.\end{Prop}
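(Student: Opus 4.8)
The plan is to reduce the invariance statement on $B_l$ to the already-established invariance of the functional $\varphi_{\tau(2l)}$ on $\textrm{Pol}(S_{q\tau(2l)}^2)$ (Proposition \ref{PropFunc}) together with a direct check on the ``new'' spectral subspace spanned by the $A_s$. First I would use the decomposition obtained in Lemma \ref{LemFaith}: every element of $B_l$ is a linear combination of monomials $X^mZ^n$, $A_sZ^n$ (with $-2l<s<2l$), $A_{-2l}X^mZ^n$ and $A_{2l}(X^*)^mZ^n$. By the $\mathbb{Z}\times\mathbb{Z}_2$-grading coming from the adjoint action of $Z$, the functional $\varphi_l$ (which is supported only in degree zero, since $\mathbf Z$ is diagonal in the basis $e_{k,\pm}^{(l)}$ and $\varphi_l(T)=\textrm{Tr}(\mathbf Z\,T)$) vanishes on every monomial whose $\mathbb{Z}$-degree is nonzero and, among the $A_s$-monomials, on everything except the $A_0Z^n$. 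So the invariance equation $\varphi_l(b\lhd g)=\varphi_l(b)\varepsilon(g)$ only needs to be verified for $b$ of two types: $b\in\textrm{Pol}(S_{q\tau(2l)}^2)$ and $b=A_0Z^n$ (or, more efficiently, $b=A_0 P(Z)$ for $P$ a polynomial), and it suffices to check it for the algebra generators $g\in\{E,F,K\}$ of $U_q(su(2))$, since invariance for a spanning set of $U_q(su(2))$-elements follows by multiplicativity of $b\mapsto b\lhd g$ against $\Delta$.

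For the first type, $b\in\textrm{Pol}(S_{q\tau(2l)}^2)$, the module structure restricted to this subalgebra is by construction the usual one, and $\varphi_l$ restricted there is exactly $\varphi_{\tau(2l)}$ in the notation of Proposition \ref{PropFunc} (both are the normal functional with trace-class operator $\mathbf Z_{\tau(2l)}$); hence invariance here is precisely Proposition \ref{PropFunc}. For the second type I would use the explicit banded-operator formulas: $Z e_{k,\pm}^{(l)}=\pm q^{2k+2l+1}e_{k,\pm}^{(l)}$ and $A_0 e_{k,\pm}^{(l)} = (\pm q^{2k+2};q^2)_{2l}^{1/2}(\mp q^{2k+2};q^2)_{2l}^{1/2}e_{k,\mp}^{(l)}$, so $A_0P(Z)$ is explicitly a trace-class operator on $l^2(\mathbb N)\oplus l^2(\mathbb N)$ whose $\varphi_l$-value one can compute directly as a sum $\sum_{k}$ over the diagonal of $\mathbf Z\,A_0P(Z)$; since $A_0$ flips the $\pm$ index, this diagonal is actually zero, so $\varphi_l(A_0 P(Z))=0$. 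Thus the only thing left is to show $\varphi_l((A_0 P(Z))\lhd g)=0$ for $g\in\{E,F,K\}$: for $K$ this is immediate since $A_0P(Z)\lhd K = \mathbf Z(A_0P(Z))\mathbf Z^{-1}$ is again of the form $A_0\tilde P(Z)$; for $E$ and $F$ one has $A_0P(Z)\lhd E = Z^{-1}[A_0P(Z),X]$ (and the analogous commutator with $Y$ for $F$), which lands in the span of $A_{-1}Z^n$ and $A_1Z^n$ — monomials of nonzero $\mathbb{Z}$-degree — on which $\varphi_l$ already vanishes by the grading argument. This closes the check.

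The main subtlety, and the only place where a little care is needed rather than a routine computation, is the justification that one may use the ``inner'' formulas $b\lhd E = Z^{-1}[b,X]$ etc. and manipulate traces freely despite the unboundedness of $Z^{-1}$ and of the trace: this is exactly the point flagged in the discussion after Proposition \ref{PropFunc}, and the legitimate route is to first reduce to elements of the form (something)$\cdot Z$, which are honest trace-class operators, before invoking the inner action. Concretely I would argue that it is enough to prove invariance of $\varphi_l$ on the dense-in-the-relevant-grading subspace $B_l\cdot Z$ (every $A_0P(Z)$ already has a factor of $Z$ available, and for the Podle\'s-sphere part one reduces as in the proof sketch of Proposition \ref{PropFunc} to the cases $X^n,(X^*)^n$ which are handled separately), on which all trace manipulations are valid, and then the general case follows. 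Once this reduction is in place, everything else is the bookkeeping with $q$-shifted factorials indicated above, and the proposition follows by appealing to Proposition \ref{PropEquiv} (already invoked to give $B_l$ its $SU_q(2)$-action) to transport the infinitesimal invariance to genuine $\mathbb{G}$-invariance.
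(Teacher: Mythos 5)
Your argument is correct and is in substance the same as the paper's: both reduce the statement to Proposition \ref{PropFunc} by observing that $\varphi_l$ vanishes identically on the $A_s$-part (the off-diagonal blocks), which is an equivariant complement of $\textrm{Pol}(S_{q\tau(2l)}^2)$ in $B_l$; the paper merely packages your grading-and-generators bookkeeping into the single line $\varphi_l=\varphi_{\tau(2l)}\circ E$, where $E:x\mapsto p_-xp_-+p_+xp_+$ is the equivariant conditional expectation onto the diagonal blocks, which also removes any need for the trace-manipulation caveats in your last paragraph. The only loose end to tidy is that invariance must be checked for \emph{all} $b$ with $\varphi_l(b)=0$ (e.g.\ $b=A_sZ^n$, $s\neq 0$), but this follows at once from your own observations that $\lhd$ preserves the $A$-part and that $\varphi_l$ vanishes on all of it.
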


\begin{proof} Let $p_{\pm}$ be the projections onto the summands of $l^2(\mathbb{N})\oplus l^2(\mathbb{N})$. We find that the conditional expectation \[ E: B(l^2(\mathbb{N})\oplus l^2(\mathbb{N})) \rightarrow B(l^2(\mathbb{N}))\oplus B(l^2(\mathbb{N})): x\rightarrow p_-xp_- + p_+xp_+\] restricts to an equivariant conditional expectation $B_l \rightarrow \textrm{Pol}(S_{q\tau(2l)}^2)$. Since $\varphi_l = \varphi_{\tau(2l)}\circ E$, the proposition follows from Proposition \ref{PropFunc}.
\end{proof}

\noindent We can now prove Theorem \ref{Theo2}.

\begin{proof}[Proof (of Theorem \ref{Theo2})]

\noindent We first remark that the definition of $B_l$ also makes sense when $l=0$. In fact, it is easily seen that $B_0$ is just a copy of $\textrm{Pol}(S_{q,0}^2)\rtimes \mathbb{Z}_2$ where $\mathbb{Z}_2$ acts by the automorphism $\sigma_0$ (see the remark after Definition \ref{DefPod}). All results of this section then hold for $B_0$, \emph{except} that $B_0$ is not ergodic: the proof of \ref{LemInvs} in fact shows that the space of invariants is linearly spanned by 1 and $A_0$. Now the `antipodal reflection map' $\sigma_0$ on $\textrm{Pol}(S_{q0}^2)$ gives a Galois action by $\mathbb{Z}_2$ (cf. \cite{Haj2}, Proposition 2.10). Hence, by the discussion after Proposition \ref{PropMor2}, $B_0$ is $SU_q(2)$-equivariantly Morita equivalent with $\textrm{Pol}(\mathbb{R}P_q^2)$, which is by definition the fixed point algebra under $\sigma_0$. If we denote $p_{\pm} =\frac{1}{2}(1\pm A_0)$, then $p_{\pm}B_0p_{\pm} \cong \textrm{Pol}(\mathbb{R}P_q^2)$ equivariantly. \\

\noindent Now for $l\in \frac{1}{2}\mathbb{N}$, let us write $V_{2l,\pm}$ for the space $V=\mathbb{C}\lbrack \mathbb{N}\rbrack$ considered with the $\pi_{\pm}$-action by $\textrm{Pol}(S_{q\tau(2l)}^2)$, and $V_{2l} = V_{2l,-}\oplus V_{2l,+}$. Consider $B_l\odot M_2(\mathbb{C})$, represented on $V_{2l}\odot \mathbb{C}^2$. Let us write the eigenvectors $\xi$ from Lemma \ref{LemEig} and the remark under it as follows: \[ \left.\begin{array}{llll} e^{(l\pm\frac{1}{2})}_{k,+}&=& \xi^{\tau(2l\pm 1)}_{k+2l\pm 1,+}, &\qquad k \in \mathbb{N}-(2l\pm 1),\\
e^{(l\pm \frac{1}{2})}_{k,-}&=& \xi^{\tau(2l\pm 1)}_{k,-}, &\qquad k\in \mathbb{N}.\end{array}\right.\]

\noindent We may identify the span of the $e^{(l \pm \frac{1}{2})}_{k,\nu}$ over all $k$ with $V_{2l\pm 1,\nu}$, where $\nu\in \{-,+\}$. Then we can write $V_{2l}\odot \mathbb{C}^2$ as $V_{2l-1} \oplus V_{2l+1}$, with corresponding projection maps $p_{2l \pm 1}$. From the results of the previous section, it follows that \[p_{2l \pm 1} (\textrm{Pol}(S_{q\tau(2l)}^2)\otimes M_2(\mathbb{C}))p_{2l \pm 1} = \textrm{Pol}(S_{q\tau(2l\pm 1)}^2),\] in its natural presentation w.r.t. the basis $e^{(l\pm \frac{1}{2})}$. Now as $\pi_{-\tau,+} = \pi_{\tau,-}\circ \sigma$, we have that in the new basis also \[\pi_{\tau(2l),a}^{(2)} = \pi_{\tau(2l-1),a}\oplus \pi_{\tau(2l+1),a},\] where we recall the notations Notation \ref{NotCom} and Notation \ref{NotExp}. By Lemma \ref{LemMod}.2, the action of $U_q(su(2))$ on $B_l\odot M_2(\mathbb{C})$ will be implemented by this representation.\\

\noindent We want to show now that \[p_{2l \pm 1} (B_{l}\otimes M_2(\mathbb{C}))p_{2l \pm 1} = B_{l \pm \frac{1}{2}},\] where for the moment we assume $l>0$ in the $-$-case. As the $\sigma$-weak closure of $B_{l}$ is clearly the whole of $B(l^2(\mathbb{N})\oplus l^2(\mathbb{N}))$, and as the latter has a normal positive functional which restricts to an invariant functional on $B_l$, by the previous proposition, a similar argument as in the proof of Theorem \ref{Theo1} shows that it is sufficient to prove that the right hand side is \emph{contained} in the left hand side.\\

\noindent We have already remarked above that the copy of the Podle\'{s} sphere inside $B_{l\pm \frac{1}{2}}$ will belong to the left hand side. It remains to prove this also for the generators $A_{2l\pm 1}^{(s)}$.\\

\noindent Let us denote $e_{k,\mu,\nu}$ for the vector $e_{k,\mu}^{(l)}\otimes e_{\nu}$ in $V_{2l}\otimes \mathbb{C}^2$. Then we may write \begin{eqnarray*} \sqrt{1+q^{4l}}\cdot e^{(l+\frac{1}{2})}_{k,\pm} &=& (e_{k,\pm,+}\quad e_{k+1,\pm,-})\cdot\left(\begin{array}{ll} \mp (1\mp q^{2k+4l+2})^{1/2}\\ q^{2l}(1\pm q^{2k+2})^{1/2}\end{array}\right),\\
\sqrt{1+q^{4l}}\cdot e^{(l-\frac{1}{2})}_{k,\pm} &=& (e_{k-1,\pm,+}\quad e_{k,\pm,-})\cdot\left(\begin{array}{ll} \pm q^{2l}(1 \pm q^{2k})^{1/2}\\ (1 \mp q^{2k+4l})^{1/2}\end{array}\right).\end{eqnarray*}

\noindent Inversely we have \begin{eqnarray*} \frac{1}{\sqrt{1+q^{4l}}}\cdot e_{k,\pm,+} &=& (e_{k,\pm}^{(l+\frac{1}{2})}\quad e_{k+1,\pm}^{(l-\frac{1}{2})})\cdot\left(\begin{array}{ll} \mp (1\mp q^{2k+4l+2})^{1/2}\\ \pm q^{2l}(1\pm q^{2k+2})^{1/2}\end{array}\right),\\
\frac{1}{\sqrt{1+q^{4l}}}\cdot e_{k,\pm,-} &=& (e_{k-1,\pm}^{(l+\frac{1}{2})}\quad e_{k,\pm}^{(l-\frac{1}{2})})\cdot\left(\begin{array}{ll} q^{2l}(1 \pm q^{2k})^{1/2}\\ (1 \mp q^{2k+4l})^{1/2}\end{array}\right).\end{eqnarray*}

\noindent One computes then that w.r.t.~ the original basis of $V_{2l}\otimes \mathbb{C}^2 = \left(\begin{array}{ll} V_{2l}\otimes e_+ \\ V_{2l}\otimes e_- \end{array}\right)$, one has \[ A_{2l+1}^{(0)} = \left(\begin{array}{cc} - A_{2l}^{(0)} (1-q^{4l+2}Z^2) & q^{2l}A_{2l}^{(-1)}(1+q^{-2l-1}Z)(1+q^{2l+1}Z) \\ -q^{2l}A_{2l}^{(1)}(1-q^{-2l+1}Z)(1-q^{2l+1}Z) & q^{4l}A_{2l}^{(0)}(1-q^{-4l-2}Z^2) \end{array}\right).\]

\noindent A similar computation shows that, for $l>0$, we can write \begin{equation}\tag{*} \label{IdEq} A_{2l-1}^{(0)} = \left(\begin{array}{cc} - q^{4l} A_{2l}^{(0)} & -q^{2l} A_{2l}^{(-1)} \\ q^{2l} A_{2l}^{(1)}& A_{2l}^{(0)}\end{array}\right).\end{equation}

\noindent We have thus shown that \[A_{2l\pm 1}^{(0)} \in p_{2l \pm 1} (B_{l}\otimes M_2(\mathbb{C}))p_{2l \pm 1}.\] As all other $A^{(s)}_{2l\pm1}$ lie in $A_{2l\pm1}^{(0)} \lhd U_q(su(2))$, it follows that \[A_{2l\pm 1}^{(s)} \in p_{2l \pm 1} (B_{l}\otimes M_2(\mathbb{C}))p_{2l \pm 1}\] for all $s$, and so $p_{2l \pm 1} (B_{l}\otimes M_2(\mathbb{C}))p_{2l \pm 1} = B_{l \pm \frac{1}{2}}$.\\

\noindent Now Theorem 2 will follow from Lemma \ref{LemInd} and the above discussion, if we can also show that $\textrm{Pol}(\mathbb{R}P_q^2) \otimes M_2(\mathbb{C}) \cong B_{\frac{1}{2}}$. From the remarks in the first paragraph of this proof, it is sufficient to show that \[(p_+\otimes 1)(B_0\otimes M_2(\mathbb{C}))(p_+\otimes 1) \cong B_{\frac{1}{2}}.\] Now an easy computation shows that $(A_0\otimes 1)p_1(A_0\otimes 1) = p_{-1}$. As we already know that \[p_1(B_0\otimes M_2(\mathbb{C}))p_1 \cong B_\frac{1}{2},\] on which the $SU_q(2)$-action is ergodic, we must have that $p_1$ and $p_{-1}$ are minimal projections in the fixed point algebra of $B_0\otimes M_2(\mathbb{C})$. As also $(A_0\otimes 1)$ lies in the latter, it follows that the fixed point algebra is in fact $M_2(\mathbb{C})$. Hence \[(p_+\otimes 1)(B_0\otimes M_2(\mathbb{C}))(p_+\otimes 1)\cong p_1(B_0\otimes M_2(\mathbb{C}))p_1 \cong B_\frac{1}{2},\] and we are done.

\end{proof}

\noindent \emph{Acknowledgements:} I would like to thank U. Kr\"{a}hmer for pointing me towards the references \cite{Hec1} and \cite{Schm1}.

\end{document}